\documentclass[12pt]{article}

\usepackage{enumitem}
\usepackage{ifthen}
\newboolean{IncludeExerciseSolutions}
\setboolean{IncludeExerciseSolutions}{true}
\newboolean{IncludeTODOs}
\setboolean{IncludeTODOs}{true}
\usepackage{authblk}

\usepackage{xcolor}
\definecolor{mydarkblue}{rgb}{0,0.08,0.9}
\usepackage[colorlinks,linkcolor=blue,urlcolor=blue,citecolor=blue,pagebackref=true,backref=true]{hyperref}
\renewcommand*{\backrefalt}[4]{%
    \ifcase #1 \footnotesize{(Not cited.)}%
    \or        \footnotesize{(Cited on page~#2.)}%
    \else      \footnotesize{(Cited on pages~#2.)}%
    \fi}

\usepackage[utf8]{inputenc}
\usepackage{breakcites}

\usepackage{doi}
\usepackage{amsmath}
\usepackage{amsfonts}
\usepackage{amsthm}
\usepackage{amssymb}
\usepackage{amsmath}
\usepackage{amsfonts}
\usepackage{dsfont}
\usepackage{amsthm}
\usepackage{amssymb}
\usepackage[mathscr]{euscript}
\usepackage{nicefrac}
\usepackage{algorithm}
\usepackage{algpseudocode}
\usepackage{comment}
\usepackage{mathtools}

\usepackage{bm} % bold math fonts with \bm{math expr}
\usepackage{dsfont} % blackboard bold ones
\usepackage{graphicx}
\usepackage{tikz-cd}
\usepackage{tikz}
\usepackage{float}
% use in a table cell to authorize linebreaks with \\ %
% http://tex.stackexchange.com/questions/2441/how-to-add-a-forced-line-break-inside-a-table-cell

\usepackage{psfrag}
\usepackage{algorithm}
\usepackage{algpseudocode}
% Do-While construct
\algdef{SE}[DOWHILE]{Do}{doWhile}{\algorithmicdo}[1]{\algorithmicwhile\ #1}%
\usepackage{mathrsfs} % for \mathscr{ABC} style (curly calligraphy)
% section numbering in margin
\usepackage{sectsty}
\makeatletter\def\@seccntformat#1{\protect\makebox[0pt][r]{\csname the#1\endcsname\hspace{12pt}}}\makeatother
\definecolor{mycolor1}{rgb}{0.105882,0.619608,0.466667}
\definecolor{mycolor2}{rgb}{0.85098,0.372549,0.00784314}
\definecolor{mycolor3}{rgb}{0.458824,0.439216,0.701961}
\definecolor{mycolor4}{rgb}{0.905882,0.160784,0.541176}
\definecolor{mycolor5}{rgb}{0.4,0.65098,0.117647}
\definecolor{mycolor6}{rgb}{0.65098,0.462745,0.113725}
\definecolor{mycolor7}{rgb}{0.901961,0.670588,0.00784314}
\definecolor{mycolor8}{rgb}{0.4,0.4,0.4}
\definecolor{mycolor9}{rgb}{0.301961,0,0.294118}
\definecolor{mycolor10}{rgb}{0.0313725,0.25098,0.505882}

\usepackage{bigfoot}
%%%%% Activate these in track change mode:
%\newcommand{\add}[1]{\text{}{\color{blue}{#1}}} %old command: \newcommand{\add}[1]{{\color{blue}{#1}}}
%\newcommand{\remove}[1]{\text{}{\footnote{\color{red}{[removed: #1]}}}} %old commant: \newcommand{\remove}[1]{{\footnote{\color{red}{[removed: #1]}}}}
%%%%% Activate these in release mode:

\newcommand{\remove}[1]{}

%%%%% Activate these in track change mode:
%\newcommand{\removesafe}[1]{{\scriptsize{\color{red}{[removed: #1]}}}}
%\newcommand{\change}[2]{\remove{#1}\add{#2}}
%\newcommand{\changesafe}[2]{\removesafe{#1}\add{#2}}
%%%%% Activate these in release mode:
\newcommand{\removesafe}[1]{}

\newcommand{\calP}{\mathcal{P}}

\newcommand{\ones}{\textbf{1}}

\newcommand{\calS}{\mathcal{S}}

\newcommand{\im}{\operatorname{im}}

\DeclareMathOperator{\diag}{diag}

\DeclareMathOperator{\Diag}{Diag}

\newcommand{\expect}{\mathbb{E}}

\newcommand{\frobnorm}[2][F]{\left\|{#2}\right\|_\mathrm{#1}}

\newcommand{\frobnormsmall}[1]{\|{#1}\|_\mathrm{F}}

\newcommand{\grad}{\nabla}
\newcommand{\hess}{\nabla^2}

 % need to be distinguishable from identity matrix

\newcommand{\inner}[2]{\left\langle{#1},{#2}\right\rangle}

\newcommand{\innersmall}[2]{\langle{#1},{#2}\rangle}

\newcommand{\lambdamax}{\lambda_\mathrm{max}}
\newcommand{\lambdamin}{\lambda_\mathrm{min}}

\newcommand{\norm}[1]{\left\|{#1}\right\|}

\newcommand{\opnormsmall}[1]{\|{#1}\|_\mathrm{op}}

\newcommand{\PSD}{\mathrm{PSD}}

\newcommand{\rank}{\operatorname{rank}}

\newcommand{\reals}{{\mathbb{R}}}

\newcommand{\sigmamin}{\sigma_{\operatorname{min}}}

\newcommand{\Skew}{\operatorname{Skew}}

\newcommand{\spann}{\mathrm{span}}

\newcommand{\sqfrobnorm}[2][F]{\frobnorm[#1]{#2}^2}

\newcommand{\sqfrobnormsmall}[1]{\frobnormsmall{#1}^2}

\newcommand{\St}{\mathrm{St}}
\newcommand{\Cent}{\mathrm{Cent}}
\newcommand{\Holl}{\mathrm{Hol}}
\newcommand{\CPSD}{\mathrm{CPSD}}

\newcommand{\Sym}{\mathrm{Sym}}

\newcommand{\normal}{\mathrm{N}}
\DeclareMathOperator{\trace}{Tr}

\newcommand{\transpose}{^\top\! }

\newcommand{\aref}[1]{\hyperref[#1]{A\ref{#1}}}

\newtheorem{theorem}{Theorem}[section]

\newtheorem{corollary}[theorem]{Corollary}

\newtheorem{lemma}[theorem]{Lemma}

\newtheorem{proposition}[theorem]{Proposition}

\newtheorem{remark}[theorem]{Remark}
 %[section]

\usepackage[lmargin=3cm,rmargin=3cm,bottom=3cm,top=3cm]{geometry}
\usepackage[round,compress]{natbib}
% section numbering in margin
\usepackage{sectsty}
\makeatletter\def\@seccntformat#1{\protect\makebox[0pt][r]{\csname the#1\endcsname\hspace{12pt}}}\makeatother
\usepackage{subcaption}
\usepackage{enumitem} % to enumerate with a. b. c. ... to match referee's numbering
\usepackage[normalem]{ulem} % just to strike out in a response to referee
% For lists with two columns
% https://stackoverflow.com/questions/1398127/breaking-a-list-into-multiple-columns-in-latex
\usepackage{multicol}
% Use \begin{assumption} \label{assu:xyz} and \aref{assu:xyz}

\ifthenelse{\boolean{IncludeExerciseSolutions}}
{
	\newenvironment{answer}{ \color{blue} \paragraph{Answer.}}{\hfill$\blacksquare$\protect\\}
}
{
	
	\excludecomment{answer}
}
\ifthenelse{\boolean{IncludeTODOs}}
{
	\newcommand{\TODO}[1]{{\color{red}{[#1]}}}
}
{
	\newcommand{\TODO}[1]{}
}

 % Number of free points in the anchors setting
 % Number of fixed points in the anchors setting
% \newcommand{\centeringmat}{\mathscr{P}}
\newcommand{\centeringmat}{P_c} % Matrix that centers a vector
\newcommand{\dimgt}{\ell} % Dimension of the ground truth configuration
\newcommand{\dimopt}{k} % Dimension of the optimization
\newcommand{\fu}{g} % Quartic cost (upstairs)
\newcommand{\fd}{f} % Quadratic cost (downstairs)
\newcommand{\tildell}{\tilde \ell} % min{\ell, n-k}
\newcommand{\frob}{\mathrm{F}} % Frobenius norm
\newcommand{\onevec}{\mathbf{1}} % Vector of ones
 % Normal distribution
 % Range of an operator
\newcommand{\edges}{E} % Set of edges of the observation graph

 % Probability expectation
\DeclarePairedDelimiterXPP{\normF}[1]{}{\lVert}{\rVert}{_{\mathrm{F}}}{#1} % Frobenius norm

\newcommand{\gt}{Z_*} % Ground truth
\newcommand{\pt}{Z} % Configuration
\newcommand{\gtgram}{Y_*} % Ground truth Gram
\newcommand{\ptgram}{Y} % Configuration Gram
\newcommand{\holmat}{X} % Hollow matrix

\newcommand{\erdosrenyi}{Erd\H{o}s--R\'enyi}

\interfootnotelinepenalty=1000000000

\title{Sensor network localization has a benign landscape after low-dimensional relaxation}
\author{{\fontsize{13.2pt}{12pt}\selectfont Christopher Criscitiello\thanks{The Wharton School, University of Pennsylvania, USA. \texttt{crisciti@wharton.upenn.edu}}, Andrew D.\ McRae\thanks{CERMICS, ENPC, Institut Polytechnique de Paris, CNRS, France. \texttt{andrew.mcrae@enpc.fr}}, Quentin Rebjock\thanks{Institute of Mathematics, EPFL, Lausanne, Switzerland. \texttt{quentin.rebjock@epfl.ch}}, Nicolas Boumal\thanks{Institute of Mathematics, EPFL, Lausanne, Switzerland. \texttt{nicolas.boumal@epfl.ch}}}}

\date{\today}

\begin{document}

\maketitle

\begin{abstract}
% NEW
  We consider the sensor network localization problem, which is closely related to multidimensional scaling and Euclidean distance matrix completion.
  Given a ground truth configuration of $n$ points in $\reals^\dimgt$, we observe a subset of the pairwise distances and aim to recover the underlying configuration (up to rigid transformations).
  We show with a simple counterexample that the associated optimization problem is nonconvex and may admit spurious local minimizers, even when all distances are known.
  Yet, inspired by numerical experiments, we argue that all second-order critical points become global minimizers when the problem is relaxed by optimizing over configurations in dimension $\dimopt > \dimgt$.
  Specifically, we show this for two settings, both when all pairwise distances are known:
  (1) for arbitrary ground truth points, and $\dimopt = O(\sqrt{\dimgt n})$, and:
  (2) for isotropic random ground truth points, and $\dimopt = O(\dimgt + \log n)$.
To prove these results, we identify and exploit key properties of the linear map which sends inner products to squared distances.
\end{abstract}

\tableofcontents

\section{Introduction}
\label{sec:intro}
We consider the sensor network localization (SNL) problem.
For a ground truth configuration of $n$ points $z_1^*, \ldots, z_n^* \in \reals^\dimgt$, we observe noiseless pairwise Euclidean distances $d_{ij} = \|z_i^* - z_j^*\|$ for each $\{i, j\}$ in a set of edges $\edges$ between the points.
The goal is to recover the ground truth configuration from these distances.
One natural formulation of this task is as a quartic optimization problem:
\begin{align*}
  \min_{z_1, \ldots, z_n \in \reals^\dimopt} \, \fu(z_1, \ldots, z_n) && \textrm{ with } && \fu(z_1, \ldots, z_n) = \frac{1}{2} \sum_{\{i, j\} \in \edges} \Big(\|z_i - z_j\|^2 - d_{ij}^2\Big)^2,
  \tag{SNL}
  \label{eq:snl}
\end{align*}
where $\dimopt$ is the optimization dimension (the obvious choice is $\dimopt = \dimgt$).
The ground truth is identifiable only up to rigid transformations (translations, rotations, and reflections), since distances are invariant under such transformations.

SNL has applications in robotics (where points correspond to sensors), dimensionality reduction for data analysis, and chemistry, particularly in determining molecular conformations---see Section~\ref{sec:related-work}.
In these applications, $n$ is usually large while $\dimgt$ is small, e.g., $\dimgt \in \{2, 3\}$.

\subsection*{Motivation}

The problem~\eqref{eq:snl} is nonconvex and NP-hard in general when $\edges$ is sparse~\citep{eren2004rigidity}.
It may admit spurious local minimizers (i.e., local minima which are not global minima), even when the graph $\edges$ is complete (all pairwise distances are known): see Figure~\ref{fig:counterexample} which provides a simple example to that effect.
As a result, local search methods such as gradient descent or trust regions may \emph{a priori} fail to find a global minimizer.

However, when $\edges$ is dense (i.e., most distances are known), spurious local minimizers appear to be rare: in practice, local search methods often perform remarkably well, even with random initialization.
In contrast, when $\edges$ is sparse, these same methods frequently get trapped in spurious minima, even when recovery is information-theoretically possible.\footnote{In Erdős--Rényi graphs for example, recovery is information-theoretically possible when the edge density is slightly above the connectivity threshold~\citep{lewrigidity2022}.}
Supporting simulations are shown in Figure~\ref{fig:simulations12}, with additional details provided in Section~\ref{sec:numerics}.

\begin{figure}
  \centering
  \begin{minipage}[b]{0.495\textwidth}
    \includegraphics[width=\textwidth]{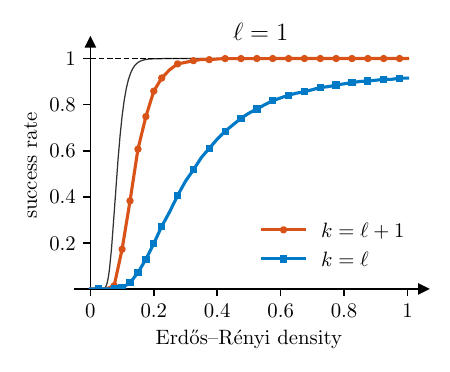}
  \end{minipage}
  \begin{minipage}[b]{0.495\textwidth}
    \includegraphics[width=\textwidth]{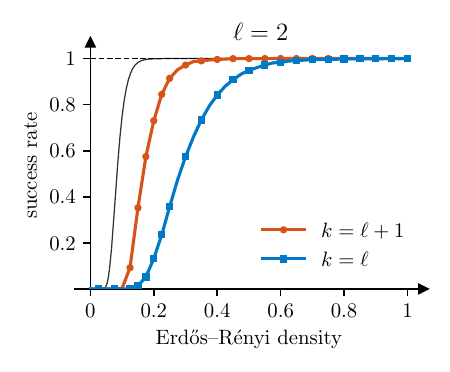}
  \end{minipage}
  \caption{Relaxing the dimension (that is, optimizing in dimension $\dimopt$ larger than the ground truth dimension $\dimgt$) empirically increases the probability that an optimization algorithm (here, trust regions) recovers the ground truth for \eqref{eq:snl} from a random initialization.
    This is apparent for all Erdős--Rényi densities, and especially for sparse graphs.
    The black curve marks the fraction of measurement graphs that are connected.
    Measurements here are noiseless; however, these empirical results are robust to moderate levels of noise (see Appendix~\ref{app:noise}).
    Details are in Section~\ref{sec:numerics}.}
  \label{fig:simulations12}
\end{figure}

To overcome these spurious local minimizers, we propose to \emph{relax} problem~\eqref{eq:snl} by allowing the points $z_1, \dots, z_n \in \reals^\dimopt$ to lie in a dimension $\dimopt > \dimgt$ higher than that of the ground truth.
The minimizers of the relaxed problem coincide with those of the original one as long as there are enough measurements, e.g., when $\edges$ is complete or, more generally, when it is universally rigid (see Section~\ref{sec:related-work}).

Crucially, we aim to keep $\dimopt$ as small as possible, since the relaxed problem involves optimizing over matrices of size $n \times \dimopt$. 
In contrast, standard methods based on convex relaxations (e.g., semidefinite programming) operate on dense $n \times n$ matrices, which becomes computationally prohibitive as $n$ grows. 
Therefore, we focus on \emph{low-dimensional nonconvex relaxations}, where $\dimopt \ll n$.

In many settings, we observe numerically that relaxing to even just $\dimopt = \dimgt + 1$ significantly improves the probability of recovery with a randomly initialized local search method---this also is depicted in Figure~\ref{fig:simulations12} with details in Section~\ref{sec:numerics}.
To explain this phenomenon, we adopt the perspective of \emph{landscape analysis}.
We say the objective function in~\eqref{eq:snl} has a \emph{benign landscape} if all its second-order critical points are global minima (see Section~\ref{subsec:derivatives}).
For such benign problems, several local search methods reliably find global minimizers~\citep{shub1987book,helmke1996optimization,chijinsaddles2017,jin2018agdescapes,jin2019escape,lee2019strictsaddles,cartis2012complexity,boumal2016globalrates}.

Notably, low-dimensional relaxations for~\eqref{eq:snl} empirically succeed in settings where standard theory provides no guarantees. 
In particular, a commonly invoked condition for benign landscapes in quartic problems is the restricted isometry property (RIP), which~\eqref{eq:snl} fails to satisfy---see Section~\ref{sec:propertiesEDM}. 

This discrepancy motivates the development of new theoretical tools to explain the observed behavior.
As part of our technical contributions, we identify and exploit properties of the sensing operator for SNL that are different from RIP and which explain (some of) the benign landscape results observed in practice.

\begin{figure}
  \centering
  \begin{minipage}[b]{0.49\textwidth}
    \includegraphics[width=\textwidth]{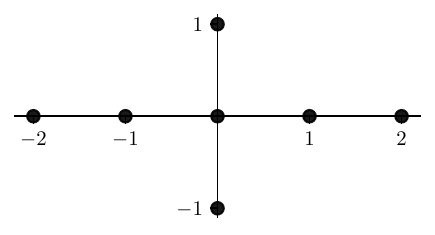}
  \end{minipage}
  \hfill
  \begin{minipage}[b]{0.49\textwidth}
    \includegraphics[width=\textwidth]{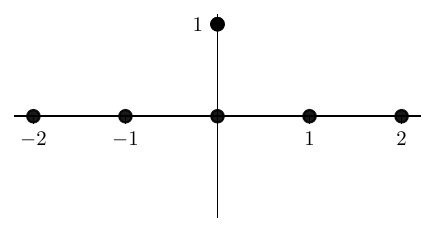}
  \end{minipage}
  \caption{Ground truth (left) and spurious configuration (right) for $n = 7$, $\dimgt = 2$, and $\dimopt = 2$.
    The latter has two overlapping points at $(0, 1)$.  Observe that when we relax to $\dimopt = 3$, it becomes possible to transform the spurious configuration (right) into the ground truth (left) while decreasing the cost~\eqref{eq:snl}: rotate one of the $(0,1)$ points around the horizontal axis---out of the plane---while keeping the other six points fixed.}
  \label{fig:counterexample}
\end{figure}

\subsection*{Overview of the results}

We focus on the complete graph and noiseless case---that is, all pairwise distances are known exactly.
In this setting, the problem is computationally simple and can be solved efficiently with an eigendecomposition (see Section~\ref{sec:related-work}).
However, we are primarily interested in the landscape of the optimization problem.
While our results do not fully explain the numerical observations mentioned above, they offer a foundation for further investigation.
In particular, they seem to constitute the first nontrivial theoretical guarantees on the landscape of~\eqref{eq:snl}.

For several years, it remained an open question whether~\eqref{eq:snl} with a complete graph admits a benign optimization landscape when $\dimopt = \dimgt$~\citep{malone2000sstress,parhizkar2013euclidean}. 
% NEW --- footnote is new; and emphasized analytically --- n=7 to n > 6
This question was recently resolved in the negative by~\citet[\S5]{song2024local}, who numerically identified spurious local minimizers for~\eqref{eq:snl} with $n = 100$ points, and then \emph{analytically} checked that configuration is stable by appealing to the Kantorovich theorem.
We add to this finding in Section~\ref{sec:counterexamples}, where we construct \emph{simple} counterexamples with  $n > 6$ points and $\ell=2$ (see Figure~\ref{fig:counterexample}), showing there exists a set of ground truth configurations (of positive measure) for which the landscape of~\eqref{eq:snl} is not benign, even when the graph $\edges$ is complete.\footnote{\label{counterexampleslit}Shortly after the release of this paper,~\citet{song2024local} independently uploaded an updated version of their paper where they included a simple counterexample with embedding dimension $\ell=1$ and any number of points $n > 6$.}
We further prove that for $\dimgt \geq 5$, the landscape can be non-benign even with only $n = \dimgt + 2$ points---which is the minimal number of points for such behavior.

Therefore, as is common in other contexts~\citep{sdplr}, we propose relaxing the optimization dimension to $\dimopt > \dimgt$ in order to obtain a benign landscape.
Simple arguments show that the landscape is benign with $\dimopt \geq n - 1$ --- see Section~\ref{sec:lowrank}.
Our goal, therefore, is to identify regimes where $\dimopt \ll n$ still suffices.

Our first results are \emph{deterministic}: see Section~\ref{sec:sqrt-n} for the precise statement and proof.
\begin{theorem}[All ground truths]\label{th:sqrtn-simplified}
  If $\edges$ in~\eqref{eq:snl} is complete and $\dimopt \gtrsim \dimgt + \sqrt{\dimgt n}$, then~\eqref{eq:snl} has benign landscape.
\end{theorem}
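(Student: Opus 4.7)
The plan is to analyze the first- and second-order optimality conditions of $g$ at a putative second-order critical point $Z \in \mathbb{R}^{n \times k}$ and show that they force $g(Z) = 0$, provided $k$ is large enough. Throughout write $Y = ZZ^\top$, $Y_* = Z_*Z_*^\top$, $E = Y - Y_*$, and let $\mathcal{K}$ denote the linear map $\mathcal{K}(Y) = \mathrm{diag}(Y)\mathbf{1}^\top + \mathbf{1}\,\mathrm{diag}(Y)^\top - 2Y$, so that (with $E$ complete) $g(Z) = \tfrac{1}{4}\|\mathcal{K}(E)\|_F^2$. Since $\mathcal{K}$ annihilates the subspace $\{u\mathbf{1}^\top + \mathbf{1} u^\top : u \in \mathbb{R}^n\}$---which encodes the translation invariance of distances---I may assume WLOG that $Z^\top\mathbf{1} = 0$ and $Z_*^\top\mathbf{1} = 0$. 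The first ``key property of $\mathcal{K}$'' I would lean on is that, restricted to centered symmetric matrices, $\mathcal{K}$ is a near-isometry: $P_c\mathcal{K}(M)P_c = -2M$, where $P_c = I - \tfrac{1}{n}\mathbf{1}\mathbf{1}^\top$.

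The first- and second-order conditions at $Z$ read $SZ = 0$ and $\|\mathcal{K}(Z\dot Z^\top + \dot Z Z^\top)\|_F^2 + 2\,\trace(\dot Z^\top S \dot Z) \ge 0$ for all $\dot Z \in \mathbb{R}^{n\times k}$, where $S := \mathcal{K}^*\mathcal{K}(E)$ plays the role of a dual slack matrix. I split on whether $Z$ has full column rank. If $\rank Z < k$, the standard Burer--Monteiro rank-deficiency argument applies: picking $\dot Z = v u^\top$ with $Zu = 0$ in the Hessian inequality forces $S \succeq 0$, and together with $SZ = 0$ and $Y \succeq 0$ these are precisely the KKT conditions for the convex problem $\min_{Y' \succeq 0} \tfrac{1}{4}\|\mathcal{K}(Y' - Y_*)\|_F^2$. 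Since $Y_*$ is feasible with cost zero, convexity then forces $g(Z) = 0$. This case needs no restriction on $k$.

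The hard case is $\rank Z = k$, and this is where the threshold $k \gtrsim \ell + \sqrt{\ell n}$ must enter. The plan is to construct a specific test direction $\dot Z$ along which the Hessian inequality yields a contradiction unless $g(Z) = 0$. The natural candidate is $\dot Z = Z_* Q$ for a well-chosen $Q \in \mathbb{R}^{\ell \times k}$ (or, more generally, a Gaussian $Q$ that one integrates over): crucially, both $Z\dot Z^\top = Z Q^\top Z_*^\top$ and $\dot Z Z^\top$ have rank at most $\ell$, so their sum has rank at most $2\ell$. I then need two bounds: (i) an upper estimate $\|\mathcal{K}(Z\dot Z^\top + \dot Z Z^\top)\|_F^2 \lesssim \ell\,n\,\|Z\|_\mathrm{op}^2\,\|\dot Z\|_F^2$, relying on the second ``key property of $\mathcal{K}$'' that for symmetric $M$ of rank at most $r$ one has $\|\mathcal{K}(M)\|_F^2 \lesssim r\,n\,\|M\|_F^2$ (much sharper than the worst-case $n^2\|M\|_F^2$); and (ii) a matching lower estimate $-\trace(\dot Z^\top S \dot Z) \gtrsim c\,k\,g(Z)/\|Z\|_\mathrm{op}^2$, obtained by averaging $\trace(Q^\top Z_*^\top S Z_* Q)$ over Gaussian $Q$ and showing that $Z_*^\top S Z_*$ has a sufficiently negative trace whenever $g(Z) > 0$. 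Substituting into the Hessian inequality forces $k^2 \gtrsim \ell n$, which together with the trivial $k \geq \ell$ yields the threshold $k \gtrsim \ell + \sqrt{\ell n}$.

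The main obstacle is the pair of estimates (i) and (ii). The low-rank bound (i) is genuinely nontrivial because $\mathcal{K}(M) = \mathrm{diag}(M)\mathbf{1}^\top + \mathbf{1}\,\mathrm{diag}(M)^\top - 2M$ can amplify the diagonal of $M$ by a factor of $n$, so one must split a rank-$r$ matrix into its diagonal and off-diagonal parts and control each separately, using the explicit structure of $\mathcal{K}^*\mathcal{K}$. The lower bound (ii) in turn requires tracing how the residual $g(Z)>0$ propagates through $\mathcal{K}^*\mathcal{K}$ into the sub-block $Z_*^\top S Z_*$, and this is precisely where the specific SNL structure---as opposed to a generic RIP hypothesis, which fails here---must be invoked. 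Once these two ingredients are in hand, the balance between (i) and (ii) in the Hessian inequality collapses to the scalar condition $k^2 \lesssim \ell n$ as the only obstruction to $g(Z) = 0$, giving the theorem.
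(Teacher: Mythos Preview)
Your treatment of the rank-deficient case is correct and matches the paper. The gap is in the full-rank case: your test direction $\dot Z = Z_* Q$ cannot produce the scalar condition $k^2 \lesssim \ell n$ you claim. Averaging over Gaussian $Q \in \reals^{\ell\times k}$ gives (using~\eqref{eq:identityinnerproduct})
\[
	\mathbb{E}\big[\trace(\dot Z^\top S \dot Z)\big] = k\,\langle Y_*, S\rangle = -k\,g(Z),
	\qquad
	\mathbb{E}\,\sqfrobnormsmall{\Delta(Z\dot Z^\top + \dot Z Z^\top)} = 4\,\langle \Delta(Y_*), \Delta(Y)\rangle,
\]
so the expected Hessian inequality reads $k\,g(Z) \leq 2\,\langle \Delta(Y_*), \Delta(Y)\rangle$. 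The right-hand side has no factor of $k$ (your bound~(i) via $\|\dot Z\|_F^2$ also scales like $k$, so the $k$'s cancel), and more importantly it does \emph{not} vanish as $g(Z)\to 0$: it tends to $\sqfrobnormsmall{\Delta(Y_*)}>0$. Hence this route can at best bound $g(Z)$ by a ground-truth-dependent quantity; it cannot force $g(Z)=0$ for arbitrary $Z_*$. What you have sketched is, in effect, the mechanism behind the paper's \emph{other} theorem (the random-ground-truth result, Theorem~\ref{thm:gt_dep_determ}), where the choice $S=(Z_*-ZR^\top)(Z_*-ZR^\top)^\top$ makes both sides scale with the same residual and yields a scalar inequality on $k$---but one that depends on $\sigma_{\min}(Z_*)$ and $\max_i\|z_i^*\|$, not the universal $\sqrt{\ell n}$.

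The paper's proof of the deterministic $\sqrt{\ell n}$ bound is structurally different. The test directions are built not from $Z_*$ but from the eigenvectors $\xi_i$ of $C=(\Delta^*\Delta)(Y-Y_*)$ itself; since $C$ encodes the residual, both terms in the Hessian inequality then involve the \emph{same} eigenvalues $\gamma_i$ of $C$, and the balance becomes intrinsic. Concretely, the paper first shows $C\preceq 0$ at any full-rank critical point, takes $S$ to be the rank-$\ell$ truncation of $-C$, and applies Lemma~\ref{lem:cute} with $T=vv^\top$ for $v$ ranging over a $(\geq k-\ell)$-dimensional subspace determined by $\ker(V^\top Y_* V)$. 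This produces, for each such $v$, an inequality of the form $0\leq u^\top[-aP+bP^2+nP^3]u$ where $P=V^\top\Gamma(-C)V\succeq 0$ and $a,b$ are built from the $\gamma_i$. The $\sqrt{\ell n}$ threshold then emerges from a trace bound $\trace(P)\leq b$ (which uses $\rank Y_*\leq\ell$ in an essential way) combined with an eigenvalue-interlacing argument on the cubic $t\mapsto -at+bt^2+nt^3$. None of your estimates~(i),~(ii) capture this; in particular the claimed low-rank bound $\sqfrobnormsmall{\mathcal K(M)}\lesssim rn\sqfrobnormsmall{M}$ is both weaker than the trivial $n\sqfrobnormsmall{M}$ and not what drives the argument.
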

\noindent We emphasize that the \( \sqrt{n} \) term appearing in Theorem~\ref{th:sqrtn-simplified} does not arise from the Pataki--Barvinok rank bound, as it does in other landscape results~\citep{boumal2016bmapproach}.
It arises from a genuinely different argument.

We then turn to random ground truth configurations.
In this setting, a much lower dimensional relaxation suffices with \emph{high probability}: see Section~\ref{sec:isotropic} for the precise statement and proof.
\begin{theorem}[Random ground truths]\label{th:isotropic}
  Suppose the ground truth points are i.i.d.\ Gaussian random vectors with covariance $\Sigma \succ 0$.
  Let $\kappa = \lambda_{\max}(\Sigma) / \lambda_{\min}(\Sigma)$ be the condition number.
  If $\edges$ in~\eqref{eq:snl} is complete, $n$ is sufficiently large, and $\dimopt \gtrsim \kappa(\dimgt + \log n)$, then~\eqref{eq:snl} has benign landscape with high probability.
\end{theorem}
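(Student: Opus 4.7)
By translation invariance of \eqref{eq:snl}, I may assume both the ground truth $Z_*$ and any candidate critical point $Z$ are centered, so that $\Delta := ZZ^\top - Y_*$ (with $Y_* := Z_*Z_*^\top$) lies in the subspace of centered symmetric matrices on which the SNL sensing map $\mathcal{A}\colon Y \mapsto (Y_{ii}+Y_{jj}-2Y_{ij})_{i<j}$ is injective. Writing $S(Z) := \mathcal{A}^*\mathcal{A}(\Delta)$, first-order criticality of $Z$ reads $S(Z)Z=0$, and when $\mathrm{rank}(Z)<\dimopt$ the second-order condition forces $S(Z)\succeq 0$. My plan is to show that every second-order critical point has $\Delta = 0$ provided $\dimopt\gtrsim\kappa(\dimgt+\log n)$.

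\textbf{Step 1: spectral control of $Z_*$.} Standard concentration for Wishart-type matrices shows that, with high probability, $Y_*$ has rank exactly $\dimgt$ and all its nonzero eigenvalues lie in $[cn\lambda_{\min}(\Sigma),\, Cn\lambda_{\max}(\Sigma)]$; in particular the condition number on the range of $Y_*$ is $O(\kappa)$. A Gaussian tail bound combined with a union bound over the $n$ rows of $Z_*$ then gives $\|z_i^*\|^2 \lesssim \lambda_{\max}(\Sigma)(\dimgt+\log n)$ uniformly in $i$; this is the first place the $\log n$ term enters.

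\textbf{Step 2: descent direction and probabilistic improvement.} At a putative SOC point $Z$ with $\Delta\neq 0$, I attempt to build a negative-curvature direction of the form $\dot Z = Z_* W - Z V$, where $W \in \reals^{\dimgt\times\dimopt}$ and $V \in \reals^{\dimopt\times\dimopt}$ are chosen to balance the two Hessian terms: the positive term $\|\mathcal{A}(\dot Z Z^\top + Z\dot Z^\top)\|^2$ is shrunk by aligning $\dot Z$ with the row null-space of $Z$ (available whenever $\mathrm{rank}(Z) < \dimopt$), while the term $2\langle S(Z),\dot Z\dot Z^\top\rangle$ is made strongly negative either via the negative spectrum of $S(Z)$ or, in the rank-deficient case, by deriving a contradiction with $S(Z)\succeq 0$ unless $\Delta = 0$. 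Whereas the deterministic argument behind Theorem~\ref{th:sqrtn-simplified} must prepare for adversarial geometry of $Z_*$, Gaussianity here allows much sharper estimates: a fixed $\dimopt$-dimensional subspace is transversal to $Z_*$ with overwhelming probability, and quadratic forms $u^\top Z_* Z_*^\top u$ concentrate around $n\,u^\top \Sigma u$. Combining per-direction concentration with an $\varepsilon$-net over candidate column-spans of $Z$ produces the $\log n$ saving, while $\kappa$ tracks the degradation of the concentration constants when $\Sigma$ is ill-conditioned.

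\textbf{Main obstacle.} The hardest step is robustly constructing the descent direction. Because $\mathcal{A}$ does not satisfy RIP (Section~\ref{sec:propertiesEDM}), one cannot conclude from $S(Z)\succeq 0$ directly that $\Delta = 0$; the descent direction must exploit the concrete Laplacian-like structure of $\mathcal{A}^*\mathcal{A}$ together with the random geometry of $Z_*$, and it is the tight quantitative balance between the two Hessian terms (weighed against the row-norm bounds from Step~1) that ultimately dictates the scaling $\dimopt \gtrsim \kappa(\dimgt + \log n)$.
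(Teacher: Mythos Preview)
Your Step~1 is essentially correct and matches the paper: the $\log n$ arises exactly from the union bound over the $n$ row norms $\|z_i^*\|^2$, and $\sigma_{\min}(\centeringmat Z_*)$ is controlled by standard matrix concentration. However, Step~2 contains a genuine gap. You propose to handle the unknown critical point $Z$ by combining per-direction concentration with an $\varepsilon$-net over candidate column spans of $Z$, attributing the $\log n$ saving to this net. This cannot work: $Z$ is a critical point of a cost that depends on $Z_*$, so $\im(Z)$ is \emph{not} independent of $Z_*$, and you cannot treat it as a ``fixed $\dimopt$-dimensional subspace transversal to $Z_*$ with overwhelming probability.'' The $\varepsilon$-net is your attempt to restore a union bound, but a net over $\dimopt$-dimensional subspaces of $\reals^n$ has cardinality $\exp(\Theta(n\dimopt))$, which swamps any polynomial-in-$n$ probability bound and certainly does not produce a $\log n$ term.

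The paper avoids this entirely by proving a \emph{deterministic} criterion on $Z_*$ alone (Theorem~\ref{thm:gt_dep_determ}): if $(\dimopt+2)\,\sigma_{\min}^2(\centeringmat Z_*) > 4n\max_i\|z_i^*-\mu\|^2$, then \emph{every} second-order critical $Z$ is global. No randomness over $Z$, no net. The key step you are missing is the specific choice of test direction: plugging $T=I_\dimopt$ and $S=(Z_*-ZR^\top)(Z_*-ZR^\top)^\top$ (with $R$ the least-squares aligner) into the second-order inequality of Lemma~\ref{lem:cute}, first-order criticality collapses the left side to the cost $\|\Delta(ZZ^\top-Y_*)\|_\frob^2 \geq \sigma_{\min}^2(Z_*)\|Z_*-ZR^\top\|_\frob^2$, while the right side is bounded by $2n\max_i\|z_i^*\|^2\cdot\|Z_*-ZR^\top\|_\frob^2$ using the explicit formula for $\Delta^*\circ\Delta$. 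Dividing through gives the deterministic condition, and only then does one invoke the probabilistic bounds from your Step~1. The $\log n$ comes solely from the row-norm maximum; $\kappa$ enters as the ratio of the two sides.
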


In Appendix~\ref{app:geomdescdirections}, we provide geometric interpretations of the descent directions constructed to prove both theorems.
We emphasize that our proof techniques used in both theorems are novel.
Moreover, our techniques for Theorem~\ref{th:isotropic} have subsequently been applied to other settings such as phase retrieval~\citep{mcrae2025phaseretrievalmatrixsensing}.

Numerical experiments in Section~\ref{sec:numerics} motivate us to conjecture that relaxing to just $\dimopt = \dimgt + 1$ is sufficient for the landscape to be benign when all distances are known, for any ground truth configuration.
We discuss the implications and several open questions arising from our observations in Section~\ref{sec:perspectives}.

\section{Related work}\label{sec:related-work}

\subsection*{Historical note}

% NEW
Consider the problem of recovering positions (up to rigid transformations) in Euclidean space from all pairwise distances~\citep{shepard1962analysis, torgerson1958theory, kruskal1964multidimensional, kruskal1964nonmetric}.
When the distances are standard Euclidean distances, this problem can be solved efficiently.
The idea is to recover pairwise inner products from the distances, then to compute an eigendecomposition of the resulting Gram matrix.  See also~\citep{wickelmaier2003introduction, borg2007modern}.

The cost function~\eqref{eq:snl} is often referred to as the \emph{squared-stress} function (s-stress for short)~\citep{takane1977nonmetric,groenen1996least}.
Several variants exist, such as the simple stress function, in which norms and distances are not squared~\citep{deleeuw1993fitting}.
The latter is not differentiable everywhere and is known to have many spurious local minimizers~\citep{trosset1997existence}.
\cite{halsted2022riemannian} propose a Riemannian reformulation, with optimization over edge directions.

When some distances are unknown, recovering the points becomes significantly more difficult.
In this case, the problem is typically referred to as the ``localization problem'' or ``sensor network localization''~\citep{yemini1979some,zhang2010rigid,alrajeh2013localization}.
Often, one also constrains some of the points to fixed positions, i.e., they are anchors.
We do not focus on anchors in our work.

In general, recovering the ground truth configuration is hard, even under the assumption that it is unique.
An $\dimgt$-dimensional configuration is said to be \emph{globally rigid} if it is the unique realization (up to rigid transformations) of its pairwise distances in $\dimgt$-dimensional Euclidean space~\citep{asimow1978rigidity,asimow1979rigidity2,jackson2007notes}.
It is said to be \emph{universally rigid} if it is the unique realization in Euclidean spaces of \textit{any} dimension~\citep{gortler2014characterizing}.
For example, non-degenerate triangles are universally rigid.
Fairly sparse universally rigid graphs exist, such as trilateration graphs~\citep{zhu2010universal}.
Recovering a configuration is NP-hard~\citep{eren2004rigidity,aspnes2004computational,aspnes2006theory}, even when global rigidity is assumed.

\subsection*{Euclidean distance matrices}

A symmetric matrix containing pairwise squared distances is called a \emph{Euclidean distance matrix} (EDM)~\citep{parhizkar2013euclidean,liberti2014euclidean,dokmanic2015euclidean}.
\citet{fliege2019euclidean} survey optimization approaches based on EDMs to solve~\eqref{eq:snl}.
\citet{liberti2014euclidean, liberti2020distance, mucherino2012distance} survey Euclidean distance geometry and provide a broad list of applications, ranging from molecular conformation, wireless sensor networks and statics to dimensionality reduction and robotics.

\subsection*{Benign or not benign?}

It is a long-standing question whether the loss landscape of~\eqref{eq:snl} (and of other related costs) is benign when all distances are known.
\citet{trosset1997existence} exhibit a spurious configuration for a non-differentiable variant of the problem.
Subsequently, \citet{malone2000study} observe that this configuration is not a minimizer for~\eqref{eq:snl}, but rather a saddle point.
\citet{vzilinskas2003multimodality} identify a spurious local minimizer for~\eqref{eq:snl}, but with a non-realizable set of distances.
\citet[\S3.5]{parhizkar2013euclidean} also explores the existence of spurious configurations.

% NEW
More recently, \citet{song2024local} analyze the landscape of~\eqref{eq:snl} and numerically find spurious local minimizers for~\eqref{eq:snl} with $\dimgt = 1$ and $\dimgt = 2$.
They analytically confirm that these configurations are truly spurious using sensitivity analysis. In Section~\ref{sec:counterexamples}, we provide \emph{simple} examples of spurious configurations (whose gradient and Hessian can be evaluated by hand).\textsuperscript{\ref{counterexampleslit}}

It is well known that increasing the number of parameters in a nonconvex problem can help eliminate spurious local minimizers and, in some cases, result in a benign landscape.
Among many other works, see for example \citet{sdplr, burer2005local, bandeira2016lowrankmaxcut}, and also \citet{safran2021effects} in the context of deep learning with teacher-student architectures.

% NEW
In the setting of SNL, it has been observed \emph{empirically} that allowing the optimization to proceed in dimensions larger than the true embedding dimension can substantially improve numerical performance.
For instance,~\citet[\S 5]{fangoleary2011} propose a ``dimensionality relaxation’’ scheme in which one first attempts minimization in the target embedding dimension and, upon failure, progressively increases the optimization dimension.
Their underlying intuition is that lifting the configuration into a higher-dimensional space may create descent paths that bypass poor local minimizers.
Similarly,~\citet[\S IV]{TasissaLai2019ExactReconstruction} propose and numerically investigate a nonconvex augmented-Lagrangian method for SNL, reporting that optimizing in dimension $10$---even when the ground-truth points lie in $\mathbb{R}^3$---can significantly improve empirical performance.
Other papers which consider overparameterization in the context of SNL include~\citep{tangtoh2023} and~\citep[\S 6.2]{smith2024riemannian}.

\subsection*{Convex approaches}

The problem~\eqref{eq:snl} is nonconvex, but there exist several convex approaches to solve it.

One such approach is to phrase the recovery of the ground truth as a matrix sensing problem.
There exists a linear operator $\Delta \colon \Sym(n) \to \Sym(n)$ which, given a configuration $\pt$, maps the Gram matrix $\ptgram = \pt\pt^\top$ to the Euclidean distance matrix associated with $\pt$ (see Section~\ref{sec:mds-background} for details).
Let $\gt$ denote the ground truth configuration with $\gtgram = \gt^{}\gt^\top$ its Gram matrix, and let $M \in \{0, 1\}^{n \times n}$ be a binary mask such that $M_{ij} = 1$ if and only if the distance between $y_i$ and $y_j$ is known.
Then, we can attempt to recover $\gtgram$ by solving
\begin{align}\label{eq:mds-completion}
  \min_{\ptgram}\, \big\|\Delta(\ptgram - \gtgram) \odot M\big\|_\frob^2 && \text{subject to} && \ptgram \succeq 0,\, \rank \ptgram \leq \dimgt,
\end{align}
and factorizing the solution to retrieve $\gt$.
Here, $\|\cdot \|_\frob$ denotes the Frobenius norm, and $\odot$ denotes the entry-wise (Hadamard) product.

Problem~\eqref{eq:mds-completion} is nonconvex due to the rank constraint.
However, when all pairwise distances are known (or more generally when the ground truth is universally rigid), the rank constraint can be removed, and the resulting problem becomes convex (a quadratic semidefinite program).
Despite this, solving the convex formulation is computationally expensive, as it requires optimizing over dense $n \times n$ matrices.
% NEW
Several works have explored convex algorithms to solve~\eqref{eq:mds-completion} and variants, including~\citep{bakonyi1995euclidian, alfakih1999solving, singer2010uniqueness}.

Another convex approach involves semidefinite programming with a linear cost (SDP).
In particular, polynomial-time algorithms based on SDP can recover universally rigid configurations (see definition in the historical note)~\citep{biswas2004semidefinite,biswas2006semidefinite,so2007theory,zhu2010universal}.
% NEW
Drawing on results from the matrix completion literature,~\citet{TasissaLai2019ExactReconstruction} also formulate SNL as a SDP and show that, under coherence conditions, the underlying point cloud can be recovered from the SDP with high probability from a small number of random distance samples.
Unfortunately, SDP-based methods also involve optimizing over dense $n \times n$ matrices, with memory and computational costs that scale at least quadratically with $n$.

\subsection*{Nonconvex approaches}

Nonconvex methods for solving~\eqref{eq:mds-completion} and related problems have also been extensively studied; see, for example,~\citep{trosset1998new, trosset2000distance,ji2013beyond, ipsitaTasissa2024, smith2024riemannian, smith2025provablenonconvexeuclideandistance2025}.
In particular,~\citet{smith2024riemannian,smith2025provablenonconvexeuclideandistance2025} develop Riemannian gradient-descent algorithms with local linear convergence guarantees under suitable initialization.
They further provide provable spectral initialization schemes that ensure convergence with high probability when the sampling density is sufficiently large.

In contrast, the landscape of the nonconvex formulation~\eqref{eq:snl} is much less studied and understood.
This formulation is the Burer--Monteiro counterpart of~\eqref{eq:mds-completion}, in which the matrix $\ptgram$ is parameterized as $\pt\pt^\top$ (more on this in Section~\ref{sec:mds-background}).
As mentioned earlier, simple computations (see Section~\ref{sec:propertiesEDM}) show that~\eqref{eq:snl} does \emph{not} satisfy the restricted isometry property~\citep{ge2016matrix,bhojanapalli2016global,zhang2018howmuchrip}, which would otherwise imply that its landscape is benign.

\subsection*{A unique point to retrieve}

A notable variant of the SNL problem involves localizing a single point from its distances to a set of anchor points.
This is known as the GPS localization problem.
\cite{beck2008exact,beck2008iterative,beck2012solution} design algorithms based on a non-differentiable cost function, and \cite{sremac2019noisy} propose an SDP relaxation.

\section{Mathematical background for SNL}\label{sec:mds-background}

This section defines the basic objects underpinning SNL, and it introduces their important properties.
We focus on the complete graph case---where all pairwise distances are known---meaning that $\edges$ in~\eqref{eq:snl} contains all pairs $\{i, j\}$ for $i \neq j$.

\subsection{Notation}

The map $\diag \colon \reals^{n \times n} \to \reals^n$ extracts the diagonal of a matrix.
Overloading notation, we also use $\diag \colon \reals^n \to \reals^{n \times n}$ to denote the map that sends a vector $x$ to the diagonal matrix with diagonal $x$.
Further define $\Diag \colon \reals^{n \times n} \to \reals^{n \times n}$ as $\Diag \ptgram = \diag(\diag \ptgram)$: this map zeros out all off-diagonal entries of $\ptgram$.

Let $e_i \in \reals^n$ denote the $i$th standard basis vector, and let $\ones \in \reals^n$ denote the all-ones vector.
The $(i, j)$-entry of a matrix $X$ is $X_{ij}$.
The $\dimopt \times \dimopt$ identity matrix is denoted $I_\dimopt$.
The set of Stiefel matrices is denoted
$$\St(n, \dimopt) = \{V \in \reals^{n \times \dimopt} : V^\top V = I_\dimopt\},$$
while $\mathrm{O}(n) = \St(n,n)$ denotes the $n \times n$ orthogonal matrices.
The set of $n \times n$ skew-symmetric matrices is denoted $\Skew(n)$.
The set of $n \times n$ symmetric matrices is $\Sym(n)$.
We also use:
\begin{align*}
  \Cent(n) &= \big\{\ptgram \in \Sym(n): \ptgram \ones = 0\big\},\\
  \Holl(n) &= \big\{\holmat \in \Sym(n) : \diag \holmat = 0\big\},\\
  \PSD(n) &= \big\{ \ptgram \in \Sym(n) : \ptgram \succeq 0 \big\}, \\
  \PSD_{\leq \dimopt}(n) &= \big\{ \ptgram \in \Sym(n) : \ptgram \succeq 0, \, \rank \ptgram \leq \dimopt \big\}, \\
  \text{and}\quad \CPSD_{\leq \dimopt}(n) &= \big\{\ptgram \in \Cent(n) : \ptgram \succeq 0,\, \rank \ptgram \leq \dimopt\big\}.
\end{align*}
We usually denote \emph{centered} matrices (elements of $\Cent(n)$) by $\ptgram$, and \emph{hollow} ones (elements of $\Holl(n)$) by $\holmat$.
The set $\CPSD_{\leq \dimopt}(n)$ corresponds to the set of configurations of $n$ points in dimension $\dimopt$ after quotienting by rigid motion symmetries.

We always work with Frobenius inner products on these spaces:
\begin{align*}
  \langle X, \ptgram \rangle = \trace(X^\top \ptgram), && \sqfrobnormsmall{X} = \trace(X^\top X).
\end{align*}
For $X \in \Sym(n)$, we denote its smallest and largest eigenvalues by $\lambda_{\min}(X)$ and $\lambda_{\max}(X)$. 
For $X \in \mathbb{R}^{m \times n}$, we denote its 
$\min\{m, n\}$th largest singular value by $\sigma_{\min}(X)$, and its largest singular value by $\sigma_{\max}(X) = \opnormsmall{X}$.

The image of $X \in \reals^{m \times n}$, i.e., the span of its columns, is denoted $\im(X)$.
The kernel of $X$ is denoted $\ker(X)$.
The dimension of the kernel is denoted $\dim(\ker(X))$.

\subsection{The optimization problem}

Given points $z_1, \ldots, z_n$, their Gram matrix $\ptgram$ holds the inner products $\ptgram_{ij} = \inner{z_i}{z_j} = z_i^\top z_j^{}$.
The Euclidean distance matrix (EDM) for those points holds the squared distances $\|z_i - z_j\|^2 = \|z_i\|^2 + \|z_j\|^2 - 2\inner{z_i}{z_j}$.
From that expression, observe that the Gram matrix maps linearly to the EDM (up to a factor 2) via $\Delta \colon \Sym(n) \to \Holl(n)$, as follows:
\begin{equation*}\label{EDMmap}\tag{EDM--map}
\begin{split}
  \Delta(\ptgram) &= \frac{1}{2}\big(\ones \diag(\ptgram)^\top + \diag(\ptgram) \ones^\top - 2 \ptgram\big), \\
  \Delta(\ptgram)_{ij} &= \frac{1}{2}(\ptgram_{ii} + \ptgram_{jj} - 2 \ptgram_{ij}).
  \end{split}
\end{equation*}
Explicitly, $2 \Delta(\pt\pt^\top)$ contains the pairwise squared distances between the rows of $\pt$.
Therefore, we call $\Delta$ the \emph{EDM map}.

This allows us to rewrite~\eqref{eq:snl} in the following way.
Let $\gt \in \reals^{n \times \dimgt}$ be the ground truth configuration of $n$ points in dimension $\dimgt$.
Then, the SNL problem with complete graph (all distances are known) can be written in the form 
\begin{align*}\label{upstairs}\tag{SNL--complete}
  \min_{\pt \in \reals^{n \times \dimopt}} \, \fu(\pt) && \textrm{ with } && \fu(\pt) = \sqfrobnorm{\Delta\big(\pt \pt^\top - \gt^{} \gt^\top\big)},
\end{align*}
where $\dimopt$ is the optimization dimension, which may be larger than $\dimgt$.

The EDM of $\pt$ is invariant under translations $\pt \mapsto \pt + \ones z^\top, z \in \reals^\dimopt$; hence, so is the cost function $\fu$.
Accordingly, without loss of generality, we may restrict our attention to centered configurations, whose points have mean zero.
That is, we shall assume that both $\pt$ and $\gt$ are centered, when convenient:
\begin{align*}
  \pt^\top \ones = 0 && \text{and} && \gt^\top \ones = 0.
\end{align*}
A configuration $\pt$ is centered in this sense if and only if its Gram matrix $\ptgram = \pt \pt^\top$ is centered in the sense that $\ptgram \in \Cent(n)$, that is, $\ptgram \ones = 0$.

Relatedly, the map $\Delta$ is not invertible.
Indeed, it is easy to verify that the kernel of $\Delta$ contains $\{\ones v^\top + v \ones^\top : v \in \reals^n\}$.
Dimension counting, together with the eigenanalysis presented in Section~\ref{sec:propertiesEDM}, implies that the kernel of $\Delta$ actually equals $\{\ones v^\top + v \ones^\top : v \in \reals^n\}$.

However, the restriction of $\Delta$ to centered matrices, i.e., $\Delta \colon \Cent(n) \to \Holl(n)$, is invertible.
In particular, any centered global minimizer $\pt$ of~\eqref{upstairs}, potentially with $\dimopt > \dimgt$, satisfies $\pt \pt^\top = \gt \gt^\top$.

Throughout, we use $\Delta$ to denote its restriction to $\Cent(n)$, unless indicated otherwise.
Given a ground truth $\gt$ and a configuration $\pt$, we define the Gram matrices $\ptgram = \pt \pt^\top$ and $\gtgram = \gt^{} \gt^\top$.

\subsection{Properties of the EDM map}\label{sec:propertiesEDM}

Note that, for all $\pt, \dot \pt \in \reals^{n \times \dimopt}$, the definition of $\Delta$~\eqref{EDMmap} implies
\begin{align}\label{eq:identityinnerproduct}
  \Delta(\dot \pt \pt^\top + \pt {\dot \pt}^\top)_{ij} = \langle \dot z_i, z_i \rangle + \langle \dot z_j, z_j \rangle - \langle \dot z_i, z_j\rangle - \langle \dot z_j, z_i\rangle = \langle \dot z_i - \dot z_j, z_i - z_j\rangle.
\end{align}
This property plays an important role throughout, especially in Lemmas~\ref{prop:properties} and~\ref{lem:cute}, and in Proposition~\ref{generalcounterexample} for constructing configurations with non-benign landscape.

We let $\Delta^* \colon \Holl(n) \to \Cent(n)$ denote the adjoint of $\Delta$, which satisfies
\begin{align*}\label{eq:adjoint}\tag{EDM--adjoint}
  \Delta^*(\holmat) = \diag(\holmat \ones) - \holmat.
\end{align*}
Define the centering matrix $\centeringmat = I - \ones \ones^\top/n$, so that the orthogonal projector from $\Sym(n)$ to $\Cent(n)$ is $\ptgram \mapsto \centeringmat \ptgram \centeringmat$.
The following formulas for $\Delta^* \circ \Delta \colon \Cent(n) \to \Cent(n)$ and its inverse play a crucial role in our analysis (especially for Theorem~\ref{th:sqrtn-simplified}):
\begin{equation}\label{eq:deltastardelta}
  \begin{split}
    (\Delta^* \circ \Delta)(\ptgram) &= \ptgram + \frac{n}{2} \centeringmat\Diag(\ptgram)\centeringmat + \frac{1}{2}\trace(\ptgram) \centeringmat,\\
    (\Delta^* \circ \Delta)^{-1}(\ptgram) &= \ptgram - \centeringmat \Diag(\ptgram) \centeringmat.
  \end{split}
\end{equation}
See Appendix~\ref{app:formulas} for a brief derivation.

\paragraph{Eigenanalysis of $\Delta^* \circ \Delta$.} Equations~\eqref{eq:deltastardelta} reveal the eigenvalues and eigenspaces of $\Delta^* \circ \Delta$.
In particular, $\Delta^* \circ \Delta \colon \Cent(n) \to \Cent(n)$ has eigenvalues $1$, $\frac{n}{2}$, and $n$, with respective multiplicities $n(n - 3)/2$, $n - 1$, and 1.
The respective eigenspaces are: hollow centered matrices $\Holl(n) \cap \Cent(n)$, traceless diagonal matrices after centering $\{\centeringmat D \centeringmat : \trace(D) = 0 \text{ and $D$ is diagonal} \}$, and scalar multiples of $\centeringmat$.

From this, we see that $\ptgram \mapsto \langle \ptgram, (\Delta^* \circ \Delta)(\ptgram)\rangle$ 
is a strongly convex quadratic on $\Cent(n)$:
\begin{align}\label{eq:lowerboundonDeltacircDelta}
\langle \ptgram, (\Delta^* \circ \Delta)(\ptgram)\rangle = \sqfrobnorm{\Delta(\ptgram)} \geq \sqfrobnorm{\ptgram} \quad \quad \forall \ptgram \in \Cent(n).
\end{align}

% NEW
The spectrum of $\Delta^* \circ \Delta$ was also recently characterized by~\citet{LICHTENBERG202486}.
Their approach makes use of a graph-theoretic result, providing a complementary perspective to ours, where the eigenvalues and eigenspaces emerge directly from the revealing formulas~\eqref{eq:deltastardelta}.\footnote{More precisely, here is how our notation connects to theirs.
Define the ``sensing basis'' $\{w_{ij}\}$ and ``dual basis'' $\{v_{ij}\}$ as in~\citep{LICHTENBERG202486}.
Then $w_{ij} = \Delta^*(q_{ij}), v_{ij} = \Delta^{-1}(q_{ij})$
up to small absolute constant factors, where $q_{ij} = \frac{1}{2} (e_i e_j^\top + e_j e_i^\top)$.
Moreover, their matrix $H$ is the matrix representation of $\Delta \Delta^*$ in the basis $\{q_{ij}\}$ for $\Holl(n)$.}

\paragraph{The restricted isometry property fails.} 
Problem~\eqref{upstairs} is an instance of \emph{low-rank matrix sensing}~\citep{Chi_2019} with sensing operator $\Delta$ given by~\eqref{EDMmap}.
A common tool for studying the landscape of matrix sensing problems is the Restricted Isometry Property (RIP).
An \emph{arbitrary} linear map $\tilde\Delta \colon \Cent(n) \to \Holl(n)$ is said to satisfy $(r, \delta_r)$-RIP if there exists a fixed scaling $c > 0$ such that for all matrices $\ptgram \in \Cent(n)$ of rank at most $r$ the following holds~\citep{ge2016matrix,bhojanapalli2016global}:
\begin{align}\label{eq:RIP}
(1-\delta_r)\sqfrobnorm{\ptgram} \leq c \langle \ptgram, (\tilde\Delta^* \circ \tilde\Delta)(\ptgram) \rangle \leq (1+\delta_r) \sqfrobnorm{\ptgram}.
\end{align}
It is known that if $\delta_{2\dimopt} \leq \frac{1}{5}$, then problem~\eqref{upstairs} with $\tilde\Delta$ instead of $\Delta$ admits a benign landscape, and that $\delta_{2\dimopt} < \frac{1}{2}$ is a necessary condition for establishing such results using the RIP framework~\citep{zhang2018howmuchrip}.

Let us show that $\Delta$ defined in~\eqref{EDMmap} fails to satisfy RIP with small $\delta$.
Observe that $\centeringmat (e_1^{} e_1^\top - e_2^{} e_2^\top) \centeringmat$ is rank $2$ with eigenvalue $\frac{n}{2}$, and 
$$(e_ 1 + e_ 2 - e_ 3 - e_ 4) (e_ 1 + e_ 2 - e_ 3 - e_ 4)^\top - (e_ 1 - e_ 2 - e_ 3 + e_ 4) (e_ 1 - e_ 2 - e_ 3 + e_ 4)^\top$$
is rank $2$ with eigenvalue $1$, because it is an element of the eigenspace $\Holl(n) \cap \Cent(n)$.
Therefore, 
$$\delta_{2\dimopt} \geq \delta_{2} \geq \frac{\frac{n}{2}-1}{\frac{n}{2}+1} = 1 - \frac{4}{n+2},$$
which implies that RIP does not hold with $\delta_{2\dimopt} < \frac{1}{2}$ and $n$ large.
Therefore, standard landscape analyses under RIP do not apply.

\subsection{The EDM map as a structured perturbation of the identity}

We have shown that the EDM map $\Delta$ is not well-conditioned in the sense of RIP, and yet it still seems to possess some special structure.
What is that structure?
Our approach is to view $\Delta^*\circ \Delta$ as a \emph{structured perturbation} of the identity.

To make this quantitative, define the linear maps $\Psi, \Gamma \colon \Cent(n) \to \Cent(n)$ by
\begin{align}
  (\Delta^* \circ \Delta)(\ptgram) = \ptgram + \Psi(\ptgram), \quad \quad (\Delta^* \circ \Delta)^{-1}(\ptgram) = \ptgram - \Gamma(\ptgram).
  \label{eq:defininggammapsi}
\end{align}
The structured perturbations are given by $\Psi$ and $\Gamma$.\footnote{The signs in equation~\eqref{eq:defininggammapsi} are chosen to reflect the identity $(1+t)^{-1} \approx 1 - t$ for $t \in \reals$ small.}

The cost function $\fu$ depends on $\Delta$ only through $\Delta^* \circ \Delta$, which is fully captured by either $\Psi$ or $\Gamma$. The proof of Theorem~\ref{th:sqrtn-simplified} relies on these maps only through the following properties. Therefore, in principle, the results of this paper  extend to sensing operators beyond $\Delta$ as long as they have the qualities below.
\begin{lemma}\label{prop:properties}
For all $\ptgram \in \Cent(n)$ such that $\ptgram \succeq 0$, we have:
\begin{enumerate} [label=\normalfont\textbf{P\arabic*}]
\item \label{P1} $\Gamma(\ptgram) \succeq 0$.

\item \label{P2} $\lambda_{\max}(\ptgram) > \lambda_{\max}(\Gamma(\ptgram))$ if $\ptgram \neq 0$.

\item \label{P3} $\trace(\ptgram) \geq \trace(\Gamma(\ptgram))$.

\item \label{P4} $\Psi(\ptgram) \preceq \frac{n}{2}\Gamma(\ptgram) + \frac{1}{2} \trace(\ptgram) \centeringmat$.

\item \label{P5} $\sqfrobnormsmall{\Delta(u w^\top + w u^\top)}  \leq 4 \langle \Delta(u u^\top), \Delta(w w^\top) \rangle$ for all $u, w \in \reals^n$. 
\end{enumerate}
\end{lemma}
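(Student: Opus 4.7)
The plan is to first compute $\Gamma$ and $\Psi$ explicitly from the formulas in equation~\eqref{eq:deltastardelta}. Subtracting the identity gives
\begin{align*}
  \Gamma(\ptgram) = \centeringmat \Diag(\ptgram) \centeringmat \qquad \text{and} \qquad \Psi(\ptgram) = \frac{n}{2} \centeringmat \Diag(\ptgram) \centeringmat + \frac{1}{2}\trace(\ptgram)\, \centeringmat = \frac{n}{2}\Gamma(\ptgram) + \frac{1}{2}\trace(\ptgram)\centeringmat.
\end{align*}
Several properties then become routine. For \ref{P1}, since $\ptgram \succeq 0$ its diagonal is nonnegative, so $\Diag(\ptgram) \succeq 0$ as a diagonal matrix, and $\centeringmat \Diag(\ptgram)\centeringmat \succeq 0$ by conjugation. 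For \ref{P3}, using $\centeringmat^2 = \centeringmat$ and the cyclic property of trace, I compute $\trace(\Gamma(\ptgram)) = \trace(\centeringmat \Diag(\ptgram)) = \trace(\Diag(\ptgram)) - \frac{1}{n}\ones^\top \Diag(\ptgram) \ones = \frac{n-1}{n}\trace(\ptgram)$, which is at most $\trace(\ptgram)$ when $\ptgram \succeq 0$. Property~\ref{P4} is then literally an \emph{equality} by the formula above.

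For \ref{P2}, I would first bound $\lambda_{\max}(\Gamma(\ptgram)) \leq \lambda_{\max}(\Diag(\ptgram)) = \max_i \ptgram_{ii}$: for any unit vector $v$, setting $u = \centeringmat v$ gives $\|u\| \leq 1$ and $v^\top \centeringmat \Diag(\ptgram) \centeringmat v = u^\top \Diag(\ptgram) u \leq \lambda_{\max}(\Diag(\ptgram))$. Then I would prove the strict inequality $\max_i \ptgram_{ii} < \lambda_{\max}(\ptgram)$ whenever $\ptgram \in \Cent(n)$, $\ptgram \succeq 0$, $\ptgram \neq 0$: if equality held, the Rayleigh-quotient characterization would make some $e_i$ a top eigenvector of $\ptgram$, forcing the $i$-th column of $\ptgram$ to vanish except at the diagonal; but then $0 = (\ptgram\ones)_i = \ptgram_{ii}$, which together with $\ptgram \succeq 0$ would force $\ptgram = 0$.

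For \ref{P5}, I would use the coordinate identity from equation~\eqref{eq:identityinnerproduct}. Viewing $u,w \in \reals^n$ as configurations of $n$ scalars, that identity gives
\begin{align*}
  \Delta(u w^\top + w u^\top)_{ij} = (u_i - u_j)(w_i - w_j), \qquad \Delta(u u^\top)_{ij} = \tfrac{1}{2}(u_i - u_j)^2,
\end{align*}
and similarly for $\Delta(ww^\top)$. Summing over $i,j$ shows that both sides of \ref{P5} equal $\sum_{i,j}(u_i-u_j)^2(w_i-w_j)^2$, so \ref{P5} also holds with equality.

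The only step that is not a one-line computation is the strict inequality in \ref{P2}; everything else follows directly from the closed forms for $\Gamma$ and $\Psi$ and from the coordinate identity~\eqref{eq:identityinnerproduct}. I would therefore organize the proof as: (i) derive the formulas for $\Gamma$ and $\Psi$, (ii) dispatch \ref{P1}, \ref{P3}, \ref{P4} in a few lines each, (iii) carry out the two-step argument for \ref{P2}, and (iv) give the coordinate calculation for \ref{P5}.
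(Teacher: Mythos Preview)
Your proposal is correct and follows essentially the same approach as the paper: both derive the explicit formulas $\Gamma(\ptgram)=\centeringmat\Diag(\ptgram)\centeringmat$ and $\Psi(\ptgram)=\frac{n}{2}\Gamma(\ptgram)+\frac{1}{2}\trace(\ptgram)\centeringmat$, dispatch \ref{P1}, \ref{P3}, \ref{P4} immediately from these, prove \ref{P2} via the Rayleigh-quotient argument that an equality $\max_i \ptgram_{ii}=\lambda_{\max}(\ptgram)$ would make some $e_i$ a top eigenvector and contradict centering, and verify \ref{P5} (with equality) by the coordinate computation based on~\eqref{eq:identityinnerproduct}. Your organization and level of detail match the paper's almost exactly.
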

Property~\ref{P1} says $\Gamma$ is completely positive.  Properties~\ref{P2} and~\ref{P3} say that $\Gamma$ is contractive in the operator and trace norms.  Property~\ref{P4} can be viewed as a nonstandard measure of the conditioning of $\Delta$.
Indeed, the $n$ that appears in~\ref{P4} dictates how much we must relax in Theorem~\ref{th:sqrtn-simplified}.
For example, if instead of~\ref{P4}, we had $\Psi(\ptgram) \preceq \frac{c}{2}\Gamma(\ptgram) + \frac{1}{2} \trace(\ptgram) \centeringmat$ for some $c > 0$, then relaxing by $\dimopt \approx \dimgt + \sqrt{c \dimgt}$ would be sufficient.
Property~\ref{P5} is useful for extracting information from the second-order criticality conditions (see Lemma~\ref{lem:cute}).

For the EDM map, Properties~\ref{P4} and~\ref{P5} in fact hold with equality, but, anticipating extensions, we state them in a weaker form.
Note that~\ref{P5} remains valid for incomplete graphs, provided that $\Delta$ is appropriately modified.

\begin{proof}
The revealing formulas~\eqref{eq:deltastardelta} imply 
\begin{align*}
\Gamma(\ptgram) = \centeringmat \Diag(\ptgram) \centeringmat, && \Psi(\ptgram) = \frac{n}{2} \centeringmat\Diag(\ptgram)\centeringmat + \frac{1}{2}\trace(\ptgram) \centeringmat.
\end{align*}
Properties~\ref{P1},~\ref{P3} and~\ref{P4} follow immediately from these expressions. 

For property~\ref{P2}, we immediately see that $\lambda_{\max}(\ptgram) \geq \lambda_{\max}(\Gamma(\ptgram))$.
Let us show the inequality is strict.
%For contradiction, assume there exists $\ptgram \neq 0$ with $\lambda_{\max}(\ptgram) = \lambda_{\max}(\Gamma(\ptgram))$, and let $\gamma = \lambda_{\max}(\ptgram) = \lambda_{\max}(\Gamma(\ptgram))$.
%As $\lambda_{\max}(\Gamma(\ptgram)) \leq \lambda_{\max}(\Diag(\ptgram))$, at least one of the diagonal entries of $\ptgram$ equals $\gamma > 0$, i.e., $e_i^\top \ptgram e_i^{} = \gamma$ for some $i$.
%By the variational characterization of the maximum eigenvalue, $\ptgram e_i = \gamma e_i$.
%This contradicts that $\ptgram$ is centered, namely $0 = \ones^\top \ptgram e_i = \gamma \ones^\top e_i = \gamma > 0$.
% NEW
Assume $\ptgram \neq 0$, which implies $\lambda_{\max}(\ptgram) > 0$.
The all-ones vector $\ones$ is an eigenvector of $\ptgram$ with eigenvalue zero, since $\ptgram$ is centered.
Therefore, any eigenvector of $\ptgram$ associated to $\lambda_{\max}(\ptgram) > 0$ must be orthogonal to $\ones$, by orthogonality of eigenspaces of symmetric matrices.
In particular, no standard basis vector $e_i$ can be an eigenvector associated with
$\lambda_{\max}(\ptgram)$, since $\ones^\top e_i = 1 \neq 0$.
This implies $e_i^\top \ptgram e_i^{} < \lambda_{\max}(\ptgram) $ for all $i$, by the variational characterization of the maximum eigenvalue.
Therefore,
$$\lambda_{\max}(\Gamma(\ptgram)) = \lambda_{\max}(\centeringmat \Diag(\ptgram) \centeringmat) \leq \lambda_{\max}(\Diag(\ptgram)) = \max_i e_i^\top \ptgram e_i^{} < \lambda_{\max}(\ptgram).$$

Now let us show~\ref{P5}.
Using the identity~\eqref{eq:identityinnerproduct}, we have 
\begin{align*}
\sqfrobnormsmall{\Delta(u w^\top + w u^\top)} 
&=  \sum_{i,j=1}^n \Delta(u w^\top + w u^\top)_{ij}^2 
= \sum_{i,j=1}^n  (u_i - u_j)^2(w_i - w_j)^2 \\
&= \sum_{i,j=1}^n 2 \Delta(u u^\top)_{ij} \cdot 2 \Delta(w w^\top)_{ij}
= 4 \langle \Delta(u u^\top), \Delta(w w^\top) \rangle.
\end{align*}
This proof of~\ref{P5} also extends to incomplete graphs: one simply replaces the sum over all $i, j$ with a sum over the edge set $\edges$, and modifies \(\Delta\) accordingly.
\end{proof}
The following is a consequence of properties~\ref{P1} and~\ref{P2}.
\begin{lemma}\label{lem:P5}
If $\ptgram \in \Cent(n)$ and $\lambda_{\max}(\ptgram) > 0$, then $\lambda_{\max}(\ptgram) > \lambda_{\max}(\Gamma(\ptgram))$.
\end{lemma}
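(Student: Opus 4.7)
The plan is to recognize that Lemma~\ref{lem:P5} is almost the same statement as property~\ref{P2}, with the PSD hypothesis replaced by the strictly weaker requirement $\lambda_{\max}(\ptgram) > 0$. Observing that the proof of~\ref{P2} used $\ptgram \succeq 0$ only to guarantee $\lambda_{\max}(\ptgram) > 0$, I expect to follow that same proof with essentially one extra case: when $\Diag(\ptgram)$ has no positive entry.

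First I would use the formula $\Gamma(\ptgram) = \centeringmat \Diag(\ptgram) \centeringmat$ derived in the proof of Lemma~\ref{prop:properties}. Since $\centeringmat$ is an orthogonal projector, for any unit $v$ the vector $w = \centeringmat v$ satisfies $\|w\| \leq 1$, hence
\begin{equation*}
  \lambda_{\max}(\Gamma(\ptgram)) \;=\; \sup_{\|v\|=1}(\centeringmat v)^\top \Diag(\ptgram)\,(\centeringmat v) \;\leq\; \sup_{\|w\|\leq 1} w^\top \Diag(\ptgram)\, w \;=\; \max\!\Big(0,\, \max_i \ptgram_{ii}\Big),
\end{equation*}
where the last equality uses that the supremum of $w^\top \Diag(\ptgram) w$ on the unit ball equals $\max_i \ptgram_{ii}$ when that quantity is positive, and $0$ (attained at $w = 0$) otherwise. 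The $\max(0,\cdot)$ is the only ingredient missing from the proof of~\ref{P2}, which implicitly used $\max_i \ptgram_{ii} \geq 0$.

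Next, with $\gamma \defeq \lambda_{\max}(\ptgram) > 0$, I would replicate the argument of~\ref{P2} to show $\max_i \ptgram_{ii} < \gamma$. The variational characterization of $\lambda_{\max}$ yields $\max_i \ptgram_{ii} \leq \gamma$ directly. If equality held at some index $i$, then $e_i$ would be a unit maximizer of $v \mapsto v^\top \ptgram v$, hence a top eigenvector: $\ptgram e_i = \gamma e_i$. Contracting with $\ones$ and using $\ptgram \in \Cent(n)$ (so $\ones^\top \ptgram = 0$) would give $0 = \ones^\top \ptgram e_i = \gamma \ones^\top e_i = \gamma > 0$, a contradiction.

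To conclude, I would combine the two bounds. If $\max_i \ptgram_{ii} \leq 0$ then $\lambda_{\max}(\Gamma(\ptgram)) \leq 0 < \gamma$; if instead $\max_i \ptgram_{ii} > 0$, then $\lambda_{\max}(\Gamma(\ptgram)) \leq \max_i \ptgram_{ii} < \gamma$. Either way, $\lambda_{\max}(\ptgram) > \lambda_{\max}(\Gamma(\ptgram))$. There is no substantive obstacle here: the main subtlety is simply that dropping $\ptgram \succeq 0$ allows the diagonal to be entirely nonpositive, which forces the extra $\max(0,\cdot)$ above but does not otherwise affect the argument.
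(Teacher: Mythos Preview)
Your proof is correct, but it takes a different route from the paper. The paper's argument uses only properties~\ref{P1} and~\ref{P2} as black boxes: it eigendecomposes $\ptgram = \ptgram_P + \ptgram_N$ into PSD and NSD parts, then chains
\[
\lambda_{\max}(\ptgram) = \lambda_{\max}(\ptgram_P) \overset{\ref{P2}}{>} \lambda_{\max}(\Gamma(\ptgram_P)) \overset{\ref{P1}}{\geq} \lambda_{\max}(\Gamma(\ptgram_P) + \Gamma(\ptgram_N)) = \lambda_{\max}(\Gamma(\ptgram)),
\]
where the second inequality uses that $\Gamma(\ptgram_N) = -\Gamma(-\ptgram_N) \preceq 0$ by linearity and~\ref{P1}. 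Your argument instead unpacks the explicit formula $\Gamma(\ptgram) = \centeringmat \Diag(\ptgram) \centeringmat$ and bounds $\lambda_{\max}(\Gamma(\ptgram))$ directly via the diagonal entries, essentially redoing the proof of~\ref{P2} with one extra case. Your approach is slightly more hands-on and transparent for this particular $\Gamma$, but the paper's approach is deliberately abstract: it applies to \emph{any} linear $\Gamma$ satisfying~\ref{P1} and~\ref{P2}, which matters because the paper frames Theorem~\ref{th:sqrt-n} as holding for general sensing operators with those properties (see the remark after the theorem and Appendix~\ref{app:otherconditions}).
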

\begin{proof}
  Eigendecompose $\ptgram$ into $\ptgram = \ptgram_{P} + \ptgram_{N}$ with $\ptgram_P \succeq 0$ and $\ptgram_N \preceq 0$.
  Then,
  $$\lambda_{\max}(\ptgram) = \lambda_{\max}(\ptgram_P) \stackrel{(1)}{>} \lambda_{\max}(\Gamma(\ptgram_P)) \stackrel{(2)}{\geq} \lambda_{\max}(\Gamma(\ptgram_P) + \Gamma(\ptgram_N)) = \lambda_{\max}(\Gamma(\ptgram)).$$
  Inequality $\stackrel{(1)}{>}$ follows from property~\ref{P2} and $\ptgram_P \neq 0$.
  Inequality $\stackrel{(2)}{\geq}$ follows from property~\ref{P1} and linearity: $-\Gamma(\ptgram_N) = \Gamma(-\ptgram_N) \succeq 0$.
\end{proof}

\subsection{Derivatives and optimality conditions}\label{subsec:derivatives}

Standard computations yield the gradient and the Hessian of $\fu$ given by~\eqref{upstairs}:
% NEW --- missing factor 4 in gradient and Hessian
\begin{equation}\label{eq:derivatives}
\begin{split}
  \grad \fu(\pt) &= 4 (\Delta^* \circ \Delta)(\ptgram - \gtgram) \pt,\\
  \hess \fu(\pt)[\dot \pt] &= 4\Big((\Delta^* \circ \Delta)(\pt \dot \pt^\top + \dot \pt \pt^\top) \pt + (\Delta^* \circ \Delta)(\ptgram - \gtgram) \dot \pt\Big), \quad \dot \pt \in \reals^{n \times \dimopt}.
  \end{split}
\end{equation}
A configuration $\pt$ is \emph{first-order critical (1-critical)} if $\nabla \fu(\pt) = 0$.
It is \emph{second-order critical (2-critical)} if also $\langle \dot \pt, \nabla^2 \fu(\pt)[\dot \pt]\rangle \geq 0$ for all $\dot \pt \in \reals^{n \times \dimopt}$.
The expressions in~\eqref{eq:derivatives} yield the following first- and second-order criticality conditions for $\pt$:
\begin{align}
  &\textbf{(1-criticality)} \qquad (\Delta^* \circ \Delta)(\ptgram - \gtgram) \pt = 0,\label{eq:1crit}\\
  &\textbf{(2-criticality)} \qquad \big\langle \dot \pt {\dot \pt}^\top,  (\Delta^* \circ \Delta)(\ptgram - \gtgram) \big\rangle + \frac{1}{2} \sqfrobnormsmall{\Delta(\dot \pt \pt^\top + \pt \dot \pt^\top)} \geq 0,\label{eq:2crit}
\end{align}
for all $\dot \pt \in \reals^{n \times \dimopt}$.
To prove that the landscape of $\fu$ is benign, we must show that all 2-critical configurations $\pt$ are global minima, i.e., $\fu(\pt) = 0$.
% NEW
We further note that the first term in the second-order condition~\eqref{eq:2crit} coincides (up to sign) with the \emph{quadratic energy} introduced by~\citet{connelly1982rigidity}.

\paragraph{Useful consequence of 2-criticality.}
Lemma~\ref{lem:cute} below is a consequence of~\eqref{eq:2crit} and property~\ref{P5}.
It is used in the proofs of both Theorems~\ref{th:sqrtn-simplified} and~\ref{th:isotropic}.
We emphasize that this lemma can easily be extended to incomplete graphs: the proofs are chosen to reflect this, in anticipation of future work on the landscape of SNL.
\begin{lemma}
	\label{lem:cute}
	Let $\pt \in \reals^{n \times \dimopt}$ satisfy $\langle \dot \pt, \hess \fu(\pt) \dot \pt \rangle \geq 0$ for all $\dot \pt \in \reals^{n \times \dimopt}$.
	For all $S \in \PSD(n)$ and all $T \in \PSD(\dimopt)$, we have 
	\begin{equation}
		\label{eq:cute_1}
		0 \leq \trace(T) \innersmall{S}{(\Delta^* \circ \Delta)(\pt \pt\transpose - \gt \gt\transpose)} + 2 \innersmall{\Delta(\pt T \pt\transpose)}{\Delta(S)}.
	\end{equation}
\end{lemma}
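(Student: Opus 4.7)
The plan is to derive~\eqref{eq:cute_1} by plugging rank-one test directions $\dot \pt = s_i t_j\transpose$ into the second-order criticality inequality~\eqref{eq:2crit}, then summing to reassemble $S$ and $T$. First I would use the spectral decomposition (or any PSD factorization) to write $S = \sum_i s_i s_i\transpose$ with $s_i \in \reals^n$ and $T = \sum_j t_j t_j\transpose$ with $t_j \in \reals^\dimopt$.

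For a single pair $(i,j)$, set $\dot \pt = s_i t_j\transpose \in \reals^{n \times \dimopt}$. Then a direct computation gives
\begin{align*}
\dot \pt \dot \pt\transpose &= \|t_j\|^2 \, s_i s_i\transpose, \\
\dot \pt \pt\transpose + \pt \dot \pt\transpose &= s_i (\pt t_j)\transpose + (\pt t_j) s_i\transpose.
\end{align*}
Setting $u = s_i$ and $w = \pt t_j$, property~\ref{P5} bounds the Hessian's curvature term:
\begin{align*}
\tfrac{1}{2}\sqfrobnormsmall{\Delta(\dot \pt \pt\transpose + \pt \dot \pt\transpose)} \leq 2 \innersmall{\Delta(s_i s_i\transpose)}{\Delta(\pt t_j t_j\transpose \pt\transpose)}.
\end{align*}
Plugging both expressions into~\eqref{eq:2crit} and using this upper bound (the inequality goes the right way because the squared-norm term has a positive sign in~\eqref{eq:2crit}) yields
\begin{align*}
0 \leq \|t_j\|^2 \innersmall{s_i s_i\transpose}{(\Delta^* \circ \Delta)(\ptgram - \gtgram)} + 2 \innersmall{\Delta(s_i s_i\transpose)}{\Delta(\pt t_j t_j\transpose \pt\transpose)}.
\end{align*}

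Finally I would sum over all $i,j$. By linearity of $\Delta$, $\Delta^* \circ \Delta$, and the inner product, together with $\sum_i s_i s_i\transpose = S$, $\sum_j t_j t_j\transpose = T$, and $\sum_j \|t_j\|^2 = \trace(T)$, this collapses to~\eqref{eq:cute_1}. There is no real obstacle: the argument is essentially a bilinear test, and the role of~\ref{P5} is precisely to let us trade the quadratic-in-$\dot \pt$ Hessian term for something bilinear in $(s_i s_i\transpose, t_j t_j\transpose)$ so that summation assembles $S$ and $T$ cleanly. The only thing to be careful with is the direction of the inequality when applying~\ref{P5}, and the fact that both decompositions are into rank-one PSD summands (which is why we need $S \succeq 0$ and $T \succeq 0$, not merely symmetric).
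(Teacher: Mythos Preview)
Your proposal is correct and essentially identical to the paper's first proof: decompose $S$ and $T$ into rank-one PSD summands, plug the rank-one test direction $\dot\pt = s_i t_j\transpose$ into~\eqref{eq:2crit}, apply~\ref{P5} with $u = s_i$ and $w = \pt t_j$ to bound the quadratic term, and sum. The paper also offers a second, equivalent proof via randomized directions $\dot\pt = S^{1/2} G\, T^{1/2}$ with $G$ standard Gaussian, but your deterministic version matches the primary argument step for step.
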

\begin{proof}[Proof of Lemma~\ref{lem:cute}]
Eigendecompose $S$ and $T$ as follows: $S = \sum_{i=1}^{n} u_i u_i^\top$ 
and $T = \sum_{j=1}^k v_j v_j^\top$. Plug $\dot \pt = u_i v_j^\top$ into the 2-criticality condition~\eqref{eq:2crit}, then use property~\ref{P5} with $u = u_i, w = \pt v_j$ 
to obtain
\begin{align*}
0 \leq& \innersmall{\dot \pt \dot \pt^\top}{(\Delta^* \circ \Delta)(\pt \pt\transpose - \gt \gt\transpose)} + \frac{1}{2}\sqfrobnormsmall{\Delta(\pt \dot \pt^\top + \dot \pt \pt^\top)} \\
\leq&
\|v_j\|^2 \innersmall{u_i u_i^\top}{(\Delta^* \circ \Delta)(\pt \pt\transpose - \gt \gt\transpose)} 
+ 2 \langle \Delta(\pt v_j v_j^\top \pt^\top), \Delta(u_i u_i^\top)  \rangle.
\end{align*}
Summing these inequalities over all $i$ and $j$ yields equation~\eqref{eq:cute_1}.
\end{proof}

We provide a second proof of Lemma~\ref{lem:cute} based on a \emph{randomized} set of descent directions.
This proof is useful for deriving a geometric interpretation of the descent directions used in Theorem~\ref{th:isotropic} --- see Appendix~\ref{app:geomdescdirections}.
\begin{proof}[Proof of Lemma~\ref{lem:cute} based on randomized directions]
Let $\dot \pt = A G B$, where $G$ is an $n \times \dimopt$ matrix of i.i.d.\ standard Gaussian random variables, and $A \in \reals^{n \times n}, B \in \reals^{\dimopt \times \dimopt}$ shall be specified later.
By assumption, we have $\expect\big[\langle \dot \pt, \nabla^2 \fu(\pt)[\dot \pt] \rangle\big] \geq 0$, where the expectation is over the randomness in $G$.

Note that $\expect\big[\dot \pt \dot \pt^\top\big]  = A \expect\big[G B B^\top G^\top\big] A^\top = \trace(B B^\top) A A^\top$, and so
\begin{align}\label{random1}
\expect\big[\innersmall{\dot \pt \dot \pt^\top}{(\Delta^* \circ \Delta)(\pt \pt\transpose - \gt \gt\transpose)}\big]
=
\trace(B B^\top)\innersmall{A A^\top}{(\Delta^* \circ \Delta)(\pt \pt\transpose - \gt \gt\transpose)}.
\end{align}
Let $a_i$ denote the $i$th row of $A$.
Equation~\eqref{eq:identityinnerproduct} implies 
$$\Delta\Big(A (G B \pt^\top) + (G B \pt^\top)^\top A\Big)_{ij} = \langle a_i - a_j, G B (z_i - z_j)\rangle.$$
Using this identity as well as $\expect \langle a, G b \rangle^2 = a^\top \expect\big[G b b^\top G^\top\big] a = \|a\|^2 \|b\|^2$, we find
\begin{align*}
\expect\sqfrobnormsmall{\Delta(\dot \pt \pt^\top + \pt \dot \pt^\top)}
&=
\sum_{i, j=1}^n \expect\langle a_i - a_j, G B (z_i - z_j)\rangle^2 \\
&=
\sum_{i, j=1}^n \|a_i - a_j\|^2 \| B (z_i - z_j)\|^2
= 4 \langle \Delta(A A^\top), \Delta(\pt B B^\top \pt^\top) \rangle.
\end{align*}
Using $\expect\big[\langle \dot \pt, \nabla^2 \fu(\pt)[\dot \pt] \rangle\big] \geq 0$ and formula~\eqref{eq:2crit}, we conclude
\begin{align*}
0 \leq& \trace(B B^\top)\innersmall{A A^\top}{(\Delta^* \circ \Delta)(\pt \pt\transpose - \gt \gt\transpose)} 
+ 2 \langle \Delta(A A^\top), \Delta(\pt B B^\top \pt^\top) \rangle.
\end{align*}
Setting $A = S^{1/2}$ and $B = T^{1/2}$ yields the lemma.
\end{proof}

\subsection{$2$-critical rank-deficient configurations are optimal}\label{sec:lowrank}

By optimizing over centered Gram matrices, we can equivalently write~\eqref{upstairs} as
\begin{align}\label{eq:downstairs}
  \min_{\ptgram} \, \fd(\ptgram) && \textrm{ with } && \fd(\ptgram) = \sqfrobnormsmall{\Delta(\ptgram - \gtgram)} && \textrm{ subject to } && \ptgram \in \CPSD_{\leq \dimopt}(n).
\end{align}
The cost function $\fd$ is convex and, when $\dimopt \geq n - 1$, the constraint set $\CPSD_{\leq \dimopt}(n)$ equals $\Cent(n) \cap \PSD(n)$, and so is also convex.
Therefore, \eqref{eq:downstairs} is a convex optimization problem {when $\dimopt$ is large enough}.
Furthermore, using arguments similar to those in~\citep[Prop.~2.7,~3.32]{levin2022lifts}, we find that second-order critical points of~\eqref{upstairs} map to stationary points of~\eqref{eq:downstairs}.
It follows that second-order critical points of~\eqref{upstairs} are global minima when $\dimopt \geq n - 1$.
This is also shown by~\citet{song2024local}, through a different proof. 

Accordingly, this paper aims to show that 2-critical configurations are globally optimal in cases where $\dimopt \leq n - 2$.
As a useful step in this direction, the general lemma below shows that rank-deficient 2-critical configurations are optimal.
The proof uses standard arguments that resemble those of~\citet[Thm.~7]{journee2010low}.
As a result, in all that follows it is sufficient to analyze full-rank 2-critical configurations.

Geometrically, the lemma says that if all points $z_i$ are contained in a proper subspace of $\reals^\dimopt$, then they either correspond to the ground truth or are not 2-critical.

\begin{lemma}\label{rankdeficiency}
  If $\pt \in \reals^{n \times \dimopt}$ is second-order critical for~\eqref{upstairs} and $\rank \pt < \dimopt$, then $\pt$ is a global minimizer of~\eqref{upstairs}.
\end{lemma}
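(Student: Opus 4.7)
The plan is to exploit the rank deficiency by constructing a large family of ``free'' second-order directions. If $\rank \pt < \dimopt$, then there exists a unit vector $v \in \reals^{\dimopt}$ with $\pt v = 0$. For any $u \in \reals^n$, I would set $\dot \pt = u v^\top$. The key observation is that $\dot \pt \pt^\top = u v^\top \pt^\top = 0$ and likewise $\pt \dot \pt^\top = 0$, so the second, quadratic term in the 2-criticality condition~\eqref{eq:2crit} vanishes entirely; in particular $\Delta(\dot \pt \pt^\top + \pt \dot \pt^\top) = 0$. Thus 2-criticality in this direction collapses to a linear inequality in $u u^\top$.

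Writing $H = (\Delta^* \circ \Delta)(\ptgram - \gtgram) \in \Cent(n)$, the above gives $\langle u u^\top, H\rangle \geq 0$ for all $u \in \reals^n$, i.e., $H \succeq 0$. Separately, 1-criticality~\eqref{eq:1crit} reads $H \pt = 0$, and therefore $\trace(\pt^\top H \pt) = \langle \ptgram, H\rangle = 0$. Putting these together,
\begin{equation*}
  \fu(\pt) = \sqfrobnormsmall{\Delta(\ptgram - \gtgram)} = \langle \ptgram - \gtgram, H\rangle = -\langle \gtgram, H\rangle.
\end{equation*}
Since $\gtgram = \gt \gt^\top \succeq 0$ and $H \succeq 0$, the right-hand side is $\leq 0$; combined with $\fu(\pt) \geq 0$, this forces $\fu(\pt) = 0$, so $\pt$ is a global minimizer.

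I do not anticipate a genuine obstacle here. The only minor care-point is to check that using directions of the very restricted form $\dot \pt = u v^\top$ truly suffices to conclude $H \succeq 0$: this is immediate because $\{u u^\top : u \in \reals^n\}$ positively spans the PSD cone, and testing positivity on this spanning set characterizes positive semidefiniteness. Everything else is a short consequence of combining the first- and second-order conditions with the fact that $\gtgram$ is itself PSD, which is what allows us to sign-control the term $\langle \gtgram, H\rangle$ at the end.
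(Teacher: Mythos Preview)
Your proof is correct and takes a more elementary route than the paper's. The paper invokes general results from the theory of lifts \citep{levin2022lifts}: it argues that 2-criticality of $\pt$ implies first-order criticality of $\ptgram = \pt\pt^\top$ for $\min_{\ptgram \in \PSD_{\leq \dimopt}(n)} \fd(\ptgram)$, then uses that when $\rank \ptgram < \dimopt$ the Fr\'echet normal cones to $\PSD_{\leq \dimopt}(n)$ and $\PSD(n)$ coincide at $\ptgram$, so $\ptgram$ is stationary---hence optimal---for the convex problem $\min_{\ptgram \succeq 0} \fd(\ptgram)$. Your argument is self-contained: you directly exploit rank deficiency to produce directions $\dot \pt = u v^\top$ that annihilate the cross term $\Delta(\dot \pt \pt^\top + \pt \dot \pt^\top)$ in~\eqref{eq:2crit}, extract $H \succeq 0$ from 2-criticality, and then combine $H\pt = 0$ with $\gtgram \succeq 0$ to force $\fu(\pt) = 0$. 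This is essentially the classical low-rank argument of \citet[Thm.~7]{journee2010low}, which the paper in fact cites as the template its own proof ``resembles.'' Your route is shorter and avoids the variational-analysis machinery; the paper's route has the virtue of placing the result within a general framework for lifted problems that applies beyond the specific quartic structure of $\fu$.
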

\begin{proof}
Let $\pt \in \reals^{n \times \dimopt}$ be 2-critical for~\eqref{upstairs}, and define $\ptgram = \pt \pt^\top$.
Since $\fu(\pt) = \fd(\pt \pt^\top) = \fd(\ptgram)$, a result in~\citep[Prop.~3.32]{levin2022lifts} directly implies that $\ptgram$ is a first-order critical point of the problem
$\min_{\ptgram \in \PSD_{\leq \dimopt}(n)} \, \fd(\ptgram) = \|\Delta(\ptgram - \gtgram)\|^2,$
which is the same as~\eqref{eq:downstairs} but we ignore centering.
This means that $-\nabla \fd(\ptgram)$ is in the Fr\'echet normal cone $\normal_\ptgram \PSD_{\leq \dimopt}(n)$~\citep[\S3.3, \S3.4]{ruszczynski2006nonlinear}.

When $\rank \pt < \dimopt$, the normal cones $\normal_\ptgram \PSD(n)$ and $\normal_\ptgram \PSD_{\leq \dimopt}(n)$ coincide~\citep[Cor.~4.12]{levin2022lifts}.
Therefore, $\ptgram$ is also first-order critical (thus, optimal) for the convex problem $\min_{\ptgram \succeq 0} \, \fd(\ptgram)$.
In particular, $\ptgram$ is a global minimum for~\eqref{eq:downstairs}.
Hence, $\pt$ is a global minimum for~\eqref{upstairs}.
\end{proof}

\section{\boldmath Landscape for all ground truths, $\dimopt \approx \sqrt{n}$}\label{sec:sqrt-n}

In this section, we prove the following more precise statement of Theorem~\ref{th:sqrtn-simplified}.
Recall that the ground truth $\gt$ lives in dimension $\dimgt$ while we optimize for $\pt$ in dimension $\dimopt$.

\begin{theorem}\label{th:sqrt-n}
If $\pt \in \reals^{n \times \dimopt}$ is second-order critical for~\eqref{upstairs}, and
  \begin{align}\label{assumptiononk}
    \dimopt > \dimgt + \frac{n}{-1/2 + \sqrt{1/4 + n / \dimgt}} \approx \dimgt  + \sqrt{n \dimgt},
  \end{align}
  then $\pt$ is a global minimizer.
\end{theorem}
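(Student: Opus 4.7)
The plan is to proceed by contradiction: assume $\pt$ is second-order critical for~\eqref{upstairs} yet $\fu(\pt) > 0$, and derive a contradiction from the hypothesis on $\dimopt$. By Lemma~\ref{rankdeficiency}, I may assume without loss of generality that $\rank(\pt) = \dimopt$. Let $M := (\Delta^* \circ \Delta)(\ptgram - \gtgram)$. The first-order condition~\eqref{eq:1crit} gives $M\pt = 0$, and taking its inner product with $\pt$ yields $\langle \ptgram, M\rangle = \langle \Delta(\ptgram), \Delta(\ptgram - \gtgram)\rangle = 0$, hence the useful identity
\[
	\fu(\pt) \;=\; \|\Delta(\gtgram)\|^2 - \|\Delta(\ptgram)\|^2 \;=\; -\langle \gtgram, M\rangle.
\]

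The heart of the argument is a single application of Lemma~\ref{lem:cute} with $S = \gtgram$ and a carefully chosen $T \in \PSD(\dimopt)$ of rank $\dimopt - \dimgt$. Since $\pt^\top \gt \in \reals^{\dimopt \times \dimgt}$ has rank at most $\dimgt$, its orthogonal complement in $\reals^\dimopt$ has dimension at least $\dimopt - \dimgt$. Take $W = (\pt^\top \pt)^{-1/2} W'$, where $W' \in \reals^{\dimopt \times (\dimopt - \dimgt)}$ has orthonormal columns satisfying $W'^\top (\pt^\top\pt)^{-1/2} \pt^\top \gt = 0$; dimension counting confirms such $W'$ exists. Setting $T := WW^\top$, the matrix $N := \pt W$ has orthonormal columns in $\reals^n$, $N^\top \gt = 0$, and $\pt T \pt^\top = NN^\top$ with $\trace(NN^\top) = \dimopt - \dimgt$, $NN^\top \preceq I_n$, and $N^\top \gtgram N = 0$.

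Plugging into Lemma~\ref{lem:cute} and using the identity above collapses the $\gtgram$-piece of $(\Delta^* \circ \Delta)(\gtgram)$ against $NN^\top$, yielding
\[
	\trace(T)\, \fu(\pt) \;\leq\; 2 \langle NN^\top, \Psi(\gtgram) \rangle.
\]
By property~\ref{P4} (which holds with equality here), $\Psi(\gtgram) = \frac{n}{2}\Gamma(\gtgram) + \frac{1}{2}\trace(\gtgram)\,\centeringmat$. Property~\ref{P3}, combined with $NN^\top \preceq I_n$, gives $\langle NN^\top, \Gamma(\gtgram)\rangle \leq \trace(\Gamma(\gtgram)) \leq \trace(\gtgram)$, so $\trace(T)\, \fu(\pt) \leq \trace(\gtgram)(n + \dimopt - \dimgt)$. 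The factor $n$ here, originating in property~\ref{P4}, is the source of the $\sqrt{n\dimgt}$ in the theorem. On the other side, Cauchy interlacing applied to $(\pt^\top\pt)^{-1}$ restricted to the admissible $(\dimopt-\dimgt)$-dimensional subspace, followed by the AM--HM inequality, yields $\trace(T) = \trace(W'^\top (\pt^\top\pt)^{-1} W') \geq (\dimopt - \dimgt)^2/\trace(\ptgram)$. Combining these produces
\[
	\fu(\pt) \;\leq\; \frac{\trace(\gtgram)\, \trace(\ptgram)\, (n + \dimopt - \dimgt)}{(\dimopt - \dimgt)^2}.
\]

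To close the argument and convert this into a contradiction under the hypothesis $(\dimopt - \dimgt)^2 > \dimgt(n + \dimopt - \dimgt)$ (the rearranged form of the theorem's assumption), one must still sharpen the spectral bookkeeping---in particular, control $\trace(\ptgram)$ by additional applications of Lemma~\ref{lem:cute} (for instance with $T = I$ and $S = \gtgram$, which yields $(\dimopt+2)\|\Delta(\ptgram)\|^2 \geq \dimopt \|\Delta(\gtgram)\|^2$ and related trace inequalities) and track how the $\dimgt$-dimensional admissibility constraint on $W'$ interacts with the eigenstructure of $\pt^\top\pt$. The \emph{hardest part} is precisely this bookkeeping: naive bounds on $\trace(\ptgram)$ in terms of $\trace(\gtgram)$ are too loose, so one has to exploit the 2-critical structure to bring $\trace(\ptgram)$ within a constant factor of $\trace(\gtgram)$. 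The factor $\dimgt$ in the threshold emerges from the $\dimgt$-dimensional subspace removed when selecting $W'$, while the $n$ comes directly from property~\ref{P4}; together they produce the quadratic-in-$(\dimopt-\dimgt)$ inequality that forces $\fu(\pt) = 0$.
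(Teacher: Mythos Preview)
Your derivation up to the bound
\[
\fu(\pt) \;\leq\; \frac{\trace(\gtgram)\,\trace(\ptgram)\,(n+\dimopt-\dimgt)}{(\dimopt-\dimgt)^2}
\]
is correct (the lower bound on $\trace(T)$ follows from $\trace(W'^\top(\pt^\top\pt)^{-1}W')\cdot\trace(W'^\top(\pt^\top\pt)W')\geq(\dimopt-\dimgt)^2$ and $\trace(W'^\top(\pt^\top\pt)W')\leq\trace(\ptgram)$; interlacing is not really needed). But this is where the approach stalls, and the gap is not bookkeeping. The inequality you obtain is an \emph{a priori upper bound} on $\fu(\pt)$, not a self-referential inequality of the form $\fu(\pt)\leq c\,\fu(\pt)$ with $c<1$. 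To reach a contradiction you would need the right-hand side to be dominated by $\fu(\pt)$ itself, yet $\trace(\gtgram)\trace(\ptgram)$ has no reason to be controlled by $\fu(\pt)=\|\Delta(\gtgram)\|^2-\|\Delta(\ptgram)\|^2$: near a genuine spurious critical point, the former can be large while the latter is moderate. Your suggested auxiliary application of Lemma~\ref{lem:cute} with $T=I$, $S=\gtgram$ yields $(\dimopt+2)\|\Delta(\ptgram)\|^2\geq\dimopt\|\Delta(\gtgram)\|^2$, which is a \emph{lower} bound on $\|\Delta(\ptgram)\|$ and does not upper bound $\trace(\ptgram)$ in any useful way. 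Even granting $\trace(\ptgram)\lesssim\trace(\gtgram)$, you would only conclude $\fu(\pt)\lesssim\trace(\gtgram)^2(n+\dimopt-\dimgt)/(\dimopt-\dimgt)^2$, which is never a contradiction. The factor $\dimgt$ that must appear multiplicatively on the right (so that the hypothesis $(\dimopt-\dimgt)^2>\dimgt(n+\dimopt-\dimgt)$ bites) simply does not arise from the quantities in your bound.

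The paper's proof avoids this by making the inequality self-referential through a different choice of $S$: instead of $S=\gtgram$, it takes $S$ to be the rank-$\tilde\dimgt$ truncation of $-C$, where $C=(\Delta^*\circ\Delta)(\ptgram-\gtgram)\preceq 0$ (Lemma~\ref{lemmaCNSD}), and it uses $T=vv^\top$ with $v=\Sigma^{1/2}u$ for $u\in\ker(V^\top\gtgram V)$. This produces, for every such $u$, the inequality $0\leq u^\top(-aP+bP^2+nP^3)u$ with $P=V^\top\Gamma(-C)V$, $a=\sqfrobnormsmall{S}$, $b=\trace(S)$. Both the negative term $-aP$ and the positive terms $bP^2$, $nP^3$ are built from the same object $C$, so their competition is governed purely by the ratio $a/b^2\geq 1/\dimgt$ and by the eigenvalue distribution of $P$, which is controlled via $\trace(P)\leq b$ (Lemma~\ref{traceP}). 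An eigenvalue-interlacing argument then forces the cubic to be negative on some vector in $\ker(V^\top\gtgram V)$ precisely when $(\dimopt-\dimgt)^2>\dimgt(n+\dimopt-\dimgt)$. Your choice $S=\gtgram$ decouples the two sides of the inequality from $C$ and loses this structure; it is closer in spirit to the proof of Theorem~\ref{thm:gt_dep_determ}, but there the self-reference comes from choosing $S=(\gt-\pt R^\top)(\gt-\pt R^\top)^\top$ so that $\sqfrobnormsmall{\gt-\pt R^\top}$ appears on both sides.
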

As is apparent from the proof below, Theorem~\ref{th:sqrt-n} holds for any map $\Delta$ satisfying properties~\ref{P1} to~\ref{P5}.
In Appendix~\ref{app:otherconditions}, we show that the theorem also holds under a different set of conditions on $\Delta$, and give a recipe for generating maps $\Delta$ which satisfy that set of conditions.

\subsection{Exploiting first-order criticality}

Assume $\pt$ is $1$-critical for~\eqref{upstairs} and full rank.\footnote{For $\pt \in \reals^{n \times \dimopt}$, ``full rank'' means $\rank \pt = \dimopt$. If instead $\rank \pt = n < \dimopt$, then Lemma~\ref{rankdeficiency} already ensures that 2-criticality implies optimality.}
As mentioned previously, we may assume $\pt$ and $\gt$ are centered throughout.
Define the Gram matrices $\ptgram = \pt \pt^\top, \gtgram = \gt \gt^\top$.
Use a polar factorization to write $\pt = V \Sigma^{1/2}$ with $V \in \St(n, \dimopt)$ and $\Sigma \succ 0$.
Pick $V_\perp$ so that $[V, V_\perp] \in \mathrm{O}(n)$, i.e., $V V^\top + V_\perp^{} V_\perp^\top = I_n$.
Since $\pt$ is centered,
\begin{align}\label{eq:Vsandstuff}
  \ptgram = V \Sigma V^\top, && \ones^\top V = 0, && \text{and} && V_\perp^{} V_\perp^\top \ones = \ones.
\end{align}
The 1-criticality condition~\eqref{eq:1crit} is conveniently expressed as\footnote{According to formula~\eqref{eq:adjoint}, $C$ is the Laplacian of the graph whose edge $\{i, j\}$ has weight $\frac{1}{2}\|z_i - z_j\|^2 - \frac{1}{2}\|z_i^* - z_j^*\|^2$.}
\begin{align}
  \textbf{(1 criticality)} \quad C\pt = 0 && \textrm{ with } && C = (\Delta^* \circ \Delta)(\ptgram - \gtgram) \in \Cent(n).
  \label{1criticality-and-C}
\end{align}
In order to prove the theorem, we must use the fact that $\gt$ is a configuration in $\reals^\dimgt$, that is, $\gtgram \in \mathrm{CPSD}_{\leq \dimgt}(n)$.
Therefore, let us manipulate the definition of $C$ and recall the definition of $\Gamma$~\eqref{eq:defininggammapsi} in order to isolate $\gtgram$:\footnote{This technique of isolating $Y_*$ is inspired by the classical landscape analysis in the special case where the sensing operator is the identity~\citep{boumal2023lworank1,boumal2023lworank2}.}
\begin{equation} \label{1criticalityrearranged}
\begin{split}
\gtgram &= \ptgram - (\Delta^* \circ \Delta)^{-1}(C) 
  = \ptgram - C + \Gamma(C).
\end{split}
\end{equation}

The cost of~\eqref{upstairs} at $\pt$ is $\fu(\pt) = \langle \ptgram - \gtgram, C \rangle$, which equals $-\langle \gtgram, C \rangle$ if $\pt$ is 1-critical.
Therefore, if $\pt$ is not optimal (i.e., $\fu(\pt) > 0$), then $C$ is not positive semidefinite.
In fact, the following lemma shows that $C$ is negative semidefinite.
\begin{lemma} \label{lemmaCNSD}
% NEW
  If $\pt$ is first-order critical for~\eqref{upstairs}, then $C \preceq 0$.
  % removed the assumption that $\pt$ is full rank
  %
\end{lemma}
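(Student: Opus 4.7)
The plan is to argue by contradiction: I will suppose $\lambda_{\max}(C) > 0$ and derive a contradiction with Lemma~\ref{lem:P5}. The key input is the rearranged 1-criticality identity~\eqref{1criticalityrearranged}, which reads $\gtgram = \ptgram - C + \Gamma(C)$. Since $\gtgram = \gt \gt^\top \succeq 0$, this rewrites as the matrix inequality
\begin{align*}
  C - \Gamma(C) \preceq \ptgram.
\end{align*}
So the strategy is to test this inequality against a vector $u$ annihilated by $\ptgram$, which will force $u^\top C u \leq u^\top \Gamma(C) u$. If I can choose $u$ to be a top eigenvector of $C$, Lemma~\ref{lem:P5} will give the contradiction.

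To produce such a $u$, I exploit that $\pt$ is full rank. The 1-criticality condition~\eqref{1criticality-and-C} says $C \pt = 0$, and since $\pt = V \Sigma^{1/2}$ with $V \in \St(n, \dimopt)$ and $\Sigma \succ 0$, this forces $C V = 0$, hence $\im(V) \subseteq \ker(C)$. By symmetry of $C$, any eigenvector $u$ of $C$ corresponding to a nonzero eigenvalue must lie in $\im(V)^\perp = \im(V_\perp)$. In particular, $\ptgram u = V \Sigma V^\top u = 0$ for any such $u$.

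Now suppose for contradiction that $\lambda_{\max}(C) > 0$, and let $u$ be a unit eigenvector for $\lambda_{\max}(C)$. By the previous paragraph, $\ptgram u = 0$. Multiplying the inequality $C - \Gamma(C) \preceq \ptgram$ on both sides by $u$ gives
\begin{align*}
  \lambda_{\max}(C) = u^\top C u \leq u^\top \Gamma(C) u \leq \lambda_{\max}(\Gamma(C)).
\end{align*}
But $C \in \Cent(n)$, so Lemma~\ref{lem:P5} (applied with $\ptgram \leftarrow C$) yields $\lambda_{\max}(C) > \lambda_{\max}(\Gamma(C))$, a contradiction. Therefore $C \preceq 0$.

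The bulk of the work has already been done in Lemma~\ref{prop:properties} and Lemma~\ref{lem:P5}, so the only nontrivial step here is recognizing how to combine $CZ = 0$ (to kill the $\ptgram u$ term) with the positive semidefiniteness of $\gtgram$ (to convert 1-criticality into the matrix inequality $C - \Gamma(C) \preceq \ptgram$). The main subtlety is that $\Gamma$ is not a contraction in the usual Loewner sense, so one cannot simply bound $\Gamma(C) \preceq C$; instead one must restrict attention to a top eigenvector of $C$ and invoke the strict contraction in maximum eigenvalue provided by Lemma~\ref{lem:P5}.
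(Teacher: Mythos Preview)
Your proof is correct and follows essentially the same approach as the paper: argue by contradiction, take a top eigenvector $u$ of $C$, use 1-criticality and full rank to get $\ptgram u = 0$, then combine $\gtgram \succeq 0$ with~\eqref{1criticalityrearranged} and Lemma~\ref{lem:P5} to obtain a contradiction. The only cosmetic difference is that the paper derives $\ptgram u = 0$ directly via $\ptgram C u = \gamma \ptgram u$ rather than through the polar factor $V$, but the argument is the same.
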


\begin{proof}
%Let $\xi$ be a unit eigenvector of $C$ associated to its maximal eigenvalue $\gamma$.
%For contradiction, assume $\gamma > 0$.
%The 1-criticality condition~\eqref{1criticality-and-C} implies $0 = \pt (\pt^\top C) \xi = \ptgram C \xi = \gamma \ptgram \xi$, i.e., $\ptgram \xi = 0$.
%Therefore, by equation~\eqref{1criticalityrearranged} and $\gtgram = \gt \gt^\top \succeq 0$,
%$$0 \leq \xi^\top \gtgram \xi = -\xi^\top C \xi + \xi^\top \Gamma(C) \xi = -\gamma + \xi^\top \Gamma(C) \xi < -\gamma + \gamma = 0,$$
%where the last inequality is due to Lemma~\ref{lem:P5}.
%This is a contradiction.
% NEW
Let $\xi$ be a unit eigenvector of $C$ associated with its maximal eigenvalue 
$\gamma := \lambda_{\max}(C)$.
If $\gamma = 0$, the conclusion follows immediately, so suppose $\gamma \neq 0$.
The 1-criticality condition~\eqref{1criticality-and-C} then gives 
$0 = \pt(\pt^\top C)\xi = \ptgram C \xi = \gamma\, \ptgram \xi,$
i.e., $\ptgram \xi = 0$.
Using~\eqref{1criticalityrearranged} together with $\gtgram = \gt\gt^\top \succeq 0$, we obtain
\[
0 \le \xi^\top \gtgram \xi
= - \xi^\top C \xi + \xi^\top \Gamma(C)\xi
= -\gamma + \xi^\top \Gamma(C)\xi .
\]
Rearranging yields
$\lambda_{\max}(C) = \gamma \le \xi^\top \Gamma(C)\xi \le \lambda_{\max}(\Gamma(C)).$
By the contrapositive of Lemma~\ref{lem:P5}, we obtain $C \preceq 0$.
\end{proof}

% NEW
The matrix $-C$ can be interpreted as a \emph{stress matrix} in the sense of rigidity theory.
If one could show that $-C$ has maximal rank, then Lemma~\ref{lemmaCNSD} would imply that the associated framework with vertex positions $Z$ is universally rigid~\citep{connelly2005generic,gortlerHealyThurston2010GlobalRigidity,gortler2014characterizing}.
In the present setting, however, this conclusion is somewhat vacuous, since the measurement graph is complete and frameworks on complete graphs are trivially universally rigid.

\subsection{Second-order criticality and descent directions}\label{sec:p2crit}

So far we assumed $\pt = V\Sigma^{1/2}$ is full rank and 1-critical, that is, $C \pt = 0$.
Due to centering, we also have $C \ones = 0$ with $\ones^\top V = 0$.
This implies that $\rank(C) \leq n-\dimopt-1$ and that the image of $C$ is contained in the image of $V_\perp$ (because $C$ is symmetric).
Moreover, $C \preceq 0$ by Lemma~\ref{lemmaCNSD}.
Let $\xi_1, \ldots, \xi_{n-\dimopt}$ be orthonormal eigenvectors of $C$ forming a basis for the image of $V_\perp$, with $\xi_{n-\dimopt} = \ones / \sqrt{n}$.
Order them so that the associated eigenvalues are $\gamma_1 \leq \cdots \leq \gamma_{n-\dimopt} = 0$.
Lastly, define
$$\tildell = \min\{\dimgt, n-k\}.$$ 
When $n$ is large, we simply have $\tildell = \dimgt$.

Next, let us assume $\pt$ is also 2-critical.
Recall the consequence of 2-criticality stated in Lemma~\ref{lem:cute}: it provides an inequality for each choice of $T, S$ positive semidefinite.
Plug $T = v v^\top$ for some $v \in \reals^\dimopt$ and 
\begin{align} \label{defS}
  S = - \sum_{i=1}^{\tildell} \gamma_i^{} \xi_i^{} \xi_i^\top \succeq 0
\end{align}
into that lemma to obtain the following lemma.
Note that $S$ is a truncation of $-C = - \sum_{i=1}^{n-\dimopt} \gamma_i^{} \xi_i^{} \xi_i^\top$ to rank at most $\tildell$.
\begin{lemma}\label{derivationdescent}
Let $\pt = V\Sigma^{1/2} \in \reals^{n \times \dimopt}$ be a full-rank, 2-critical point for~\eqref{upstairs}.
For every vector $u \in \ker(V^\top \gtgram V) \subseteq \reals^\dimopt$, we have
\begin{align}
    0 &= \Sigma u - P u, \quad \text{and} \label{SigmaUperp2} \\
    0 &\leq u^\top \left[- \langle S, S\rangle P + \trace(S) P^2 + nP^3 \right] u, \label{SigmaUperp}
\end{align}
with $S$ as in~\eqref{defS} and $P = V^\top \Gamma(-C) V \succeq 0$ with $C$ as in~\eqref{1criticality-and-C}.
\end{lemma}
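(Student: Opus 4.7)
The plan is to prove the two assertions separately, using the equation in the proof of the inequality. For the equation $\Sigma u = Pu$, I will restrict the rearranged 1-criticality identity~\eqref{1criticalityrearranged}, namely $\gtgram = \ptgram - C + \Gamma(C)$, to the column span of $V$ by conjugating with $V^\top(\,\cdot\,)V$. Since $\pt = V\Sigma^{1/2}$ and $\Sigma \succ 0$ by the full-rank assumption, the 1-criticality condition $C\pt = 0$ forces $CV = 0$ and hence $V^\top C V = 0$; the first term yields $V^\top \ptgram V = \Sigma$ directly; and linearity of $\Gamma$ combined with the definition of $P$ gives $V^\top \Gamma(C) V = -V^\top \Gamma(-C) V = -P$. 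Thus $V^\top \gtgram V = \Sigma - P$, so the kernel condition on $u$ is exactly $\Sigma u = Pu$.

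For the inequality I will apply Lemma~\ref{lem:cute} with the specified $S$ (which is PSD because $C \preceq 0$ by Lemma~\ref{lemmaCNSD}) together with the carefully chosen
\[
T = \Sigma^{-1/2} Pu\, u^\top P\, \Sigma^{-1/2} \succeq 0.
\]
This $T$ is engineered so that, using the already-proved relation $\Sigma u = Pu$, the three traces of interest collapse to $\trace(T) = u^\top P u$, $\trace(T \Sigma) = u^\top P^2 u$, and $\trace(T \Sigma^{1/2} P \Sigma^{1/2}) = u^\top P^3 u$. The right-hand side of Lemma~\ref{lem:cute} will then be simplified in three steps: (i)~a direct eigenexpansion gives $\langle S, C\rangle = -\langle S, S\rangle$; (ii)~because the $\xi_i$'s lie in $\im V_\perp$, we have $V^\top S = 0$, so $\langle \pt T \pt^\top, S\rangle = 0$, whence $\langle \Delta(\pt T \pt^\top), \Delta(S)\rangle = \langle \pt T \pt^\top, \Psi(S)\rangle$; expanding the explicit form $\Psi(S) = \tfrac{n}{2}\Gamma(S) + \tfrac{1}{2}\trace(S)\centeringmat$ and using $\centeringmat\pt = \pt$ splits this into $\tfrac{n}{2}\langle \pt T \pt^\top, \Gamma(S)\rangle + \tfrac{1}{2}\trace(S)\,\trace(T\Sigma)$; and (iii)~because $0 \preceq S \preceq -C$ (as $S$ is a truncation of $-C$ to the most negative eigenvalues), property~\ref{P1} and linearity of $\Gamma$ give $V^\top \Gamma(S) V \preceq V^\top \Gamma(-C) V = P$, which in turn upper-bounds $\langle \pt T \pt^\top, \Gamma(S)\rangle$ by $\trace(T \Sigma^{1/2} P \Sigma^{1/2})$ (using $T \succeq 0$).

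Chaining these simplifications yields $0 \leq -u^\top Pu \cdot \langle S, S\rangle + \trace(S)\, u^\top P^2 u + n\, u^\top P^3 u$, which is the desired inequality. The main subtlety is guessing the choice of $T$: simpler candidates such as $T = uu^\top$ or $T = Puu^\top P$ fail to produce the cubic-in-$P$ expression, whereas the $\Sigma^{-1/2}$ factors in the proposed $T$ precisely cancel the $\Sigma^{1/2}$'s coming from $\pt = V \Sigma^{1/2}$, and the eigenrelation $\Sigma u = Pu$ then converts every remaining $\Sigma$ into an additional factor of $P$.
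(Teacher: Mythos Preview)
Your proof is correct and essentially identical to the paper's. In fact, since you prove $\Sigma u = Pu$ first, your choice $T = \Sigma^{-1/2} Pu\, u^\top P\, \Sigma^{-1/2}$ simplifies via $\Sigma^{-1/2}Pu = \Sigma^{-1/2}\Sigma u = \Sigma^{1/2}u$ to $T = (\Sigma^{1/2}u)(\Sigma^{1/2}u)^\top = vv^\top$ with $v = \Sigma^{1/2}u$, which is exactly the paper's choice; the only cosmetic difference is that you invoke the explicit equality $\Psi(S) = \tfrac{n}{2}\Gamma(S) + \tfrac{1}{2}\trace(S)\centeringmat$ (valid for the EDM map) where the paper uses the weaker inequality~\ref{P4}, making the paper's argument portable to any $\Delta$ satisfying \ref{P1}--\ref{P5}.
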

Notice that $\ker(V^\top \gtgram V)$ is a subspace of dimension $\dimopt - \rank(\gtgram) \geq \dimopt - \dimgt$.
Here, we see plainly the benefit of relaxing the optimization dimension $\dimopt$ to be strictly larger than the ground truth dimension $\dimgt$: the larger the gap, the more inequalities are provided by~\eqref{SigmaUperp}.
Each of those exposes a candidate descent direction for us to use in the landscape analysis.
This insight guides the next parts.
\begin{proof}
  We choose $v \in \reals^\dimopt$ momentarily.
  Plug the above values for $T$ and $S$ into Lemma~\ref{lem:cute} to obtain inequality $\stackrel{(0)}{\leq}$ below then proceed to derive:
\begin{equation}\label{eq:usefulguy2}
\begin{split}
\|v\|^2 \langle S, S \rangle &= \|v\|^2 \langle S, -C \rangle \stackrel{(0)}{\leq} 2 \langle \pt v v^\top \pt^\top, (\Delta^* \circ \Delta)(S) \rangle
= 2 \langle \pt v v^\top \pt^\top, S + \Psi(S) \rangle \\
&\stackrel{(1)}{=} 2 \langle \pt v v^\top \pt^\top, \Psi(S) \rangle
\stackrel{(2)}{\leq} \langle \pt v v^\top \pt^\top, n \Gamma(S) + \trace(S) \centeringmat \rangle \\
&= \trace(S) v^\top \pt^\top \pt v + n v^\top \pt^\top \Gamma(S) \pt v \\
&\stackrel{(3)}{\leq} \trace(S) v^\top \pt^\top \pt v + n v^\top \pt^\top \Gamma(-C) \pt v.
\end{split}
\end{equation}
Equality $\stackrel{(1)}{=}$ is because $S\pt = 0$ owing to $C \pt = 0$ (1-criticality).  
Inequality $\stackrel{(2)}{\leq}$ follows from property~\ref{P4}.
Inequality $\stackrel{(3)}{\leq}$ follows from $C + S \preceq 0$ 
and property~\ref{P1}.

From here on, it is convenient to use the notation $\pt = V \Sigma^{1/2}$~\eqref{eq:Vsandstuff}.
Let $v = \Sigma^{1/2} u$ with $u \in \ker(V^\top \gtgram V)$.
Then inequality~\eqref{eq:usefulguy2} becomes
\begin{equation}\label{bleeblooeq2}
\begin{split}
\langle S, S \rangle u^\top \Sigma u &\leq 
\trace(S) u^\top \Sigma^2 u + n u^\top \Sigma V^\top \Gamma(-C) V \Sigma u \\
&= \trace(S) u^\top \Sigma^2 u + n u^\top \Sigma P \Sigma u,
\end{split}
\end{equation}
where we introduced $P = V^\top \Gamma(-C) V$.
Multiplying equation~\eqref{1criticalityrearranged} on the right and left by $V$ gives
$$V^\top \gtgram V = \Sigma - V^\top C V + V^\top \Gamma(C) V = \Sigma + V^\top \Gamma(C) V = \Sigma - P,$$
where the second-to-last equality follows from 1-criticality $0 = C \pt = C V \Sigma^{1/2}$.
Since $u \in \ker(V^\top \gtgram V)$,
$$0 = V^\top \gtgram V u = \Sigma u - P u.$$
Using this identity, equation~\eqref{bleeblooeq2} becomes
$$\langle S, S \rangle u^\top P u \leq 
\trace(S) u^\top P^2 u + n u^\top P^3 u,$$
as claimed.
\end{proof}

\subsection{Controlling the eigenvalues of $P$}

In light of Lemma~\ref{derivationdescent}, next we control the eigenvalues of $P = V^\top \Gamma(-C) V$.
The following lemma is a consequence of properties~\ref{P1} to~\ref{P3}, $\rank(\gtgram) \leq \dimgt$, $- C \succeq 0$, and first-order criticality (second-order criticality is not used).
\begin{lemma}\label{traceP}
  Let $\pt = V\Sigma^{1/2}$ be full rank and 1-critical for~\eqref{upstairs}.  We have
  \begin{equation}\label{boundtraceP}
    \begin{split}
      \trace(P) = \trace\big(V^\top \Gamma(-C) V\big) \leq \trace(S),
    \end{split}
  \end{equation}
  with $S$ as in~\eqref{defS} and $P = V^\top \Gamma(-C) V \succeq 0$ with $C$ as in~\eqref{1criticality-and-C}.
\end{lemma}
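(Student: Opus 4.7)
The plan is to use the 1-criticality identity~\eqref{1criticalityrearranged} --- written as $\gtgram + \Gamma(-C) = \ptgram - C$, or equivalently $\gtgram + C = \ptgram - \Gamma(-C)$ --- in two complementary compressions: against $V V^\top$ to extract $\trace(P)$ in closed form, and against $V_\perp V_\perp^\top$ to obtain a positive semidefinite ordering that respects the rank of $\gtgram$. The rank bound $\rank(\gtgram) \leq \dimgt$ is the only ingredient that produces the gap between $\trace(-C)$ and $\trace(S)$, so it must enter through a rank-truncated spectral bound rather than through property~\ref{P3} alone.

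First I would take the Frobenius inner product of $\gtgram + \Gamma(-C) = \ptgram - C$ with $V V^\top$. Using $V^\top \ptgram V = \Sigma$ and the 1-criticality consequence $CV = 0$, this collapses to $\trace(P) = \trace(\ptgram) - \trace(V^\top \gtgram V)$. Taking the full trace of the same identity and invoking property~\ref{P3} (namely $\trace(\Gamma(-C)) \leq \trace(-C)$) yields $\trace(\ptgram) \leq \trace(\gtgram)$. Splitting $\trace(\gtgram) = \trace(V^\top \gtgram V) + \trace(V_\perp^\top \gtgram V_\perp)$ via the orthogonal decomposition $[V, V_\perp] \in \mathrm{O}(n)$ then chains these two observations into the intermediate bound $\trace(P) \leq \trace(V_\perp^\top \gtgram V_\perp)$.

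Next I would conjugate the other form, $\gtgram + C = \ptgram - \Gamma(-C)$, by $V_\perp$. Since $\ptgram = V \Sigma V^\top$, this compression annihilates $\ptgram$ and leaves $V_\perp^\top(\gtgram + C) V_\perp = -V_\perp^\top \Gamma(-C) V_\perp \preceq 0$, where property~\ref{P1} supplies the PSD sign. Hence $V_\perp^\top \gtgram V_\perp \preceq V_\perp^\top(-C) V_\perp$. The left-hand side is PSD of rank at most $\tildell = \min\{\dimgt, n-k\}$, so a standard Weyl/Ky-Fan eigenvalue monotonicity argument gives
\begin{align*}
\trace(V_\perp^\top \gtgram V_\perp) = \sum_{i=1}^{\tildell} \lambda_i\bigl(V_\perp^\top \gtgram V_\perp\bigr) \leq \sum_{i=1}^{\tildell} \lambda_i\bigl(V_\perp^\top (-C) V_\perp\bigr) = -\sum_{i=1}^{\tildell} \gamma_i = \trace(S),
\end{align*}
which combined with the previous paragraph yields the lemma.

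The main (mild) subtlety is recognizing that the 1-criticality identity must be exploited in two different compressions. The cruder bound $\trace(P) \leq \trace(\Gamma(-C)) \leq \trace(-C)$, obtainable directly from property~\ref{P3}, is too weak when $\tildell = \dimgt < n-k$, because it retains all $n-k-1$ negative eigenvalues of $C$ instead of just the $\tildell$ most negative ones. It is the rank-$\tildell$ truncation on the $V_\perp$-block of $\gtgram$ --- not P3 --- that sharpens the bound down to $\trace(S)$.
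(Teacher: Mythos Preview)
Your proof is correct and follows essentially the same approach as the paper: both establish the intermediate inequality $\trace(P) \leq \trace(V_\perp^\top \gtgram V_\perp)$ using property~\ref{P3} and $CV=0$, and then bound $\trace(V_\perp^\top \gtgram V_\perp) \leq \trace(S)$ via property~\ref{P1} and the rank-$\tildell$ truncation. The only cosmetic difference is that you reach the intermediate inequality by combining the $VV^\top$-compression with the full trace, whereas the paper expands $\trace(V_\perp^\top \gtgram V_\perp)$ directly; the underlying algebra is identical.
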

\begin{proof}
From equation~\eqref{1criticalityrearranged}, Lemma~\ref{lemmaCNSD}, and property~\ref{P1}, $\gtgram = \ptgram - C - \Gamma(-C) \preceq \ptgram - C.$
Since $V_\perp^\top \pt = 0$ (by definition of $V_\perp$) and $\ptgram = \pt \pt\transpose$, it follows that
\begin{align}\label{PSDness}
V_\perp^\top \gtgram V_\perp \preceq - V_\perp^\top C V_\perp.
\end{align}
Because $\rank(V_\perp^\top \gtgram V_\perp) \leq \min\{\dimgt, n-k\} = \tildell$, $\trace(V_\perp^\top \gtgram V_\perp)$ is at most the sum of the $\tildell$ largest eigenvalues of $-V_\perp^\top C V_\perp$.
On the other hand, the 1-criticality condition $0 = C \pt = C V \Sigma^{1/2}$ tells us that $C$ and $V_\perp^\top C V_\perp$ have exactly the same $n-\dimopt$ least eigenvalues which we previously denoted by $\gamma_i$ (the remaining $\dimopt$ eigenvalues are zero).
Therefore, 
\begin{align}\label{thisguy3}
  \trace(V_\perp^\top \gtgram V_\perp) \leq -(\gamma_1 + \cdots + \gamma_{\tildell}) = \trace(S),
\end{align}
where $S$ is defined through~\eqref{defS}.
On the other hand, again using $\gtgram = \ptgram - C + \Gamma(C)$,
\begin{equation}\label{thisguy4}
\begin{split}
\trace(V_\perp^\top \gtgram V_\perp) &= \trace(- V_\perp^\top C V_\perp + V_\perp^\top \Gamma(C) V_\perp)
= \trace(-C) + \trace(V_\perp^{} V_\perp^\top \Gamma(C) )\\
&\stackrel{(1)}{\geq} \trace(\Gamma(-C)) + \trace(V_\perp^{} V_\perp^\top \Gamma(C) )
= \trace((I - V_\perp^{} V_\perp^\top) \Gamma(-C) ) \\
&\stackrel{(2)}{=} \trace(V V^\top \Gamma(-C)) = \trace(P).
\end{split}
\end{equation}
Inequality $\stackrel{(1)}{\geq}$ follows from property~\ref{P3}.
Equality $\stackrel{(2)}{=}$ follows from $V V^\top + V_\perp^{} V_\perp^\top = I_n$.
Combining~\eqref{thisguy3} and~\eqref{thisguy4} yields the claim.
\end{proof}

\subsection{Eigenvalue interlacing and the proof of Theorem~\ref{th:sqrt-n}}\label{sectioneigenvalueinterlacing}

This section is devoted to proving Theorem~\ref{th:sqrt-n}.
First, let us summarize what we have already shown (combining Lemmas~\ref{lemmaCNSD}, \ref{derivationdescent} and~\ref{traceP}).
Let $\pt$ be full rank and 2-critical for $\fu$.
For all $u \in \ker(V^\top \gtgram V)$ (a subspace of dimension at least $\dimopt - \dimgt$), we have
\begin{align}\label{2criticalityconsequence}
0 \leq u^\top \Big[- a P + b P^2 + nP^3 \Big]u,
\end{align}
where $P \succeq 0$ with $\trace(P) \leq b$, and
\begin{align*}
  a := \langle S, S\rangle = \sum_{i=1}^{\tildell} \gamma_i^2, && b := \trace(S) = - \sum_{i=1}^{\tildell} \gamma_i.
\end{align*}
Our approach is to show that $M := - a P + b P^2 + n P^3$ restricted to $\ker(V^\top \gtgram V)$ has at least one negative eigenvalue.
We then choose $u$ to be the corresponding eigenvector (which corresponds to a descent direction in $\nabla^2 g(Z)$).
This will contradict inequality~\eqref{2criticalityconsequence}, showing that if $\pt$ is 2-critical then it cannot be full rank.
We then invoke Lemma~\ref{rankdeficiency} to conclude $\pt$ must be a global minimizer of $\fu$.

\begin{lemma}\label{lem:numPevals}
Let $\pt$ be full rank and first-order critical for~\eqref{upstairs}, and assume $\dimopt$ satisfies condition~\eqref{assumptiononk}.
Then either $C = 0$ (and so $g(\pt) = 0$), or the smallest $\dimgt + 1$ eigenvalues of $P = V^\top \Gamma(-C) V \in \PSD(\dimopt)$ are strictly less than $\frac{-b + \sqrt{b^2+4 a n}}{2 n}$.
\end{lemma}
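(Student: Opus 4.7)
The plan is to argue by contrapositive via the trace bound of Lemma~\ref{traceP}. Let $\alpha = \frac{-b+\sqrt{b^2+4an}}{2n}$; this is the positive root of $n t^2 + b t - a = 0$. I want to show: if the $(\dimgt+1)$-th smallest eigenvalue of $P$ is at least $\alpha$, then $C = 0$ (so $\fu(\pt) = 0$). Observe at the outset that $C \neq 0$ forces $a, b > 0$: indeed $C \preceq 0$ with $\gamma_{n-\dimopt} = 0$, so $C \neq 0$ means $\gamma_1 < 0$, hence $b \geq -\gamma_1 > 0$ and then $a > 0$ as well.

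The heart of the argument is a clean two-step chain. First, if the $(\dimgt+1)$-th smallest eigenvalue of $P \succeq 0$ is at least $\alpha$, then the top $\dimopt - \dimgt$ eigenvalues of $P$ are each at least $\alpha$, so $\trace(P) \geq (\dimopt - \dimgt)\,\alpha$. Combining with $\trace(P) \leq b$ from Lemma~\ref{traceP} and manipulating the resulting inequality $(\dimopt - \dimgt)\,\alpha \leq b$ (isolate the radical in $\alpha$, then square; all quantities are nonnegative so this is equivalence-preserving) yields the equivalent form
\[
(\dimopt - \dimgt)^2 \, a \;\leq\; (n + \dimopt - \dimgt)\, b^2.
\]
Second, apply Cauchy-Schwarz to the $\tildell$ nonpositive eigenvalues summed in $b$: $b^2 = \bigl(\sum_{i=1}^{\tildell}(-\gamma_i)\bigr)^2 \leq \tildell \sum_{i=1}^{\tildell}\gamma_i^2 = \tildell\, a \leq \dimgt\, a$. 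Substituting this into the previous display and dividing by $a > 0$ gives $(\dimopt - \dimgt)^2 \leq \dimgt\,(n + \dimopt - \dimgt)$. Solving this quadratic inequality in $m := \dimopt - \dimgt$ yields $m \leq \tfrac{\dimgt}{2} + \sqrt{\tfrac{\dimgt^2}{4} + \dimgt\, n}$, which after rationalization of the denominator is exactly the negation of hypothesis~\eqref{assumptiononk} --- contradiction.

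The main obstacle is purely algebraic bookkeeping: matching the precise threshold $\alpha = \frac{-b+\sqrt{b^2+4an}}{2n}$ in the statement with the precise form of the relaxation bound in~\eqref{assumptiononk}. Everything else is routine --- the trace bound is an off-the-shelf consequence of Lemma~\ref{traceP}, the Cauchy-Schwarz application is elementary, and the reduction from ``the $(\dimgt+1)$-th smallest eigenvalue of a PSD matrix is large'' to ``the trace is large'' is standard. The only real conceptual move is recognizing that $\alpha$ should be chosen as the positive root of $n t^2 + b t - a = 0$, so that $n\alpha^2 + b\alpha = a$; once one does so, the algebraic pieces slot together and the condition on $\dimopt - \dimgt$ emerges from the quadratic in $m$.
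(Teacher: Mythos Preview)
Your proof is correct and follows essentially the same approach as the paper: both use the trace bound $\trace(P) \leq b$ from Lemma~\ref{traceP} together with the Cauchy--Schwarz estimate $b^2 \leq \tildell\, a \leq \dimgt\, a$ to contradict condition~\eqref{assumptiononk}. The only difference is the order of algebraic manipulations---the paper keeps the radical and substitutes $b^2 \leq \dimgt\, a$ into the expression $\frac{n}{-1/2+\sqrt{1/4+na/b^2}}$ before rearranging, whereas you square first to obtain the clean polynomial inequality $(\dimopt-\dimgt)^2\, a \leq (n+\dimopt-\dimgt)\, b^2$ and then substitute; both routes land on the same quadratic in $m = \dimopt - \dimgt$, and your rationalization check confirms the thresholds match exactly.
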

\begin{proof}
% NEW
Assume $C \neq 0$ (i.e., $b > 0$).
Since the squared $1$-norm in $\reals^{\tildell}$ is at most $\tildell$ times the squared $2$-norm, we have $b^2 \leq \tildell a \leq \dimgt a$.
Combining this bound with assumption~\eqref{assumptiononk} on $\dimopt$ yields
\begin{align}
  \dimopt - \dimgt > \frac{n}{-1/2 + \sqrt{1/4 + n / \dimgt}} \geq \frac{n}{-1/2 + \sqrt{1/4 + n a / b^2}}.
\end{align}
Rearranging gives 
$(\dimopt - \dimgt) \frac{-b + \sqrt{b^2 + 4 a n}}{2 n} > b.$

On the other hand, Lemma~\ref{traceP} implies $\trace(P) \leq b < (\dimopt - \dimgt) \frac{-b + \sqrt{b^2 + 4 a n}}{2 n}.$
Since $\trace(P)$ is at least the sum of the $\dimopt-\dimgt$ largest eigenvalues of $P$, it follows that
$$(\dimopt - \dimgt)\,\lambda_{\dimgt+1}(P)
<
(\dimopt - \dimgt)\,\frac{-b + \sqrt{b^{2}+4an}}{2n},$$
where $\lambda_{\dimgt+1}(P)$ denotes the $(\dimgt+1)$-st smallest eigenvalue of $P$.
Consequently, 
$\lambda_{\dimgt+1}(P) < \frac{-b + \sqrt{b^{2}+4an}}{2n}.$
%
%For contradiction, assume not.
%Then $C \neq 0$ (i.e., $b > 0$), and the largest $\dimopt - \dimgt$ eigenvalues of $P$ are each at least $\frac{-b + \sqrt{b^2+4 a n}}{2 n}$.
%Hence, using Lemma~\ref{traceP},
%\begin{align}\label{consequenceoftraceP}
%(\dimopt - \dimgt) \frac{-b + \sqrt{b^2+4 a n}}{2 n} \leq \trace(P) \leq b.
%\end{align}
%On the other hand, $b^2 \leq \tildell a \leq \dimgt a$ because the squared $1$-norm in $\reals^{\tildell}$ is at most $\tildell$ times the squared $2$-norm.
%From this and assumption~\eqref{assumptiononk} on $\dimopt$, we conclude
%\begin{align*}
%  \dimopt - \dimgt > \frac{n}{-1/2 + \sqrt{1/4 + n / \dimgt}} \geq \frac{n}{-1/2 + \sqrt{1/4 + n a / b^2}}.
%\end{align*}
%Rearranging gives 
%$(\dimopt - \dimgt) \frac{-b + \sqrt{b^2 + 4 a n}}{2 n} > b,$
%which contradicts inequality~\eqref{consequenceoftraceP}.
\end{proof}

\paragraph{Sketch of the remaining argument.}
Lemma~\ref{lem:numPevals} tells us that the \(\dimgt + 1\) smallest eigenvalues of \(P\) lie in the interval \([0, \frac{-b + \sqrt{b^2 + 4 a n}}{2n})\).
Intuitively, these eigenvalues should all be strictly positive.
As $P$ is transformed to $-aP + bP^2 + nP^3$, the eigenvalues of $P$ are transformed through the polynomial $t \mapsto -at + bt^2 + nt^3$.
That polynomial is negative on the stated (open) interval (see Figure~\ref{fig:cubicinterlacing}).
Thus, we expect the matrix \(-a P + b P^2 + n P^3\) to have at least \(\dimgt + 1\) negative eigenvalues.
Since \(\ker(V^\top \gtgram V)\) is a subspace of codimension at most \(\dimgt\), the restriction of \(-a P + b P^2 + n P^3\) to this subspace should, by the eigenvalue interlacing theorem (Lemma~\ref{equalityinterlacing1} below), have at least \((\dimgt + 1) - \dimgt = 1\) negative eigenvalues, which would complete the contradiction.
However, to make this argument rigorous, a bit more care is required because we do not know whether the eigenvalues of \(P\) are strictly positive.

\paragraph{Remainder of the proof.}  
Let $\pt$ be full rank and 2-critical for $\fu$.
If $C = 0$, then $g(\pt) = 0$ and $\pt$ is a minimizer; therefore, assume $C \neq 0$.
Let $r$ be the number of nonpositive eigenvalues of the matrix \(M = -a P + b P^2 + n P^3\).
We know that the 1-dimensional function
\begin{align*}
  [0, \infty) \to \reals,\quad \quad t \mapsto - a t + b t^2 + n t^3 = t(-a + b t + n t^2)
\end{align*}
is nonpositive if and only if \(t \in \left[0, \frac{-b + \sqrt{b^2 + 4 a n}}{2n} \right]\), and it vanishes only at \(t = 0\) and \(t = \frac{-b + \sqrt{b^2 + 4 a n}}{2n}\). 
Therefore, by Lemma~\ref{lem:numPevals}, 
$r \geq \dimgt + 1 + q$,
where \(q \geq 0\) denotes the number of eigenvalues of \(P\) equal to the right endpoint of the above interval. See Figure~\ref{fig:cubicinterlacing}.
\begin{figure}[t]
\centering
\includegraphics[width=0.8\textwidth]{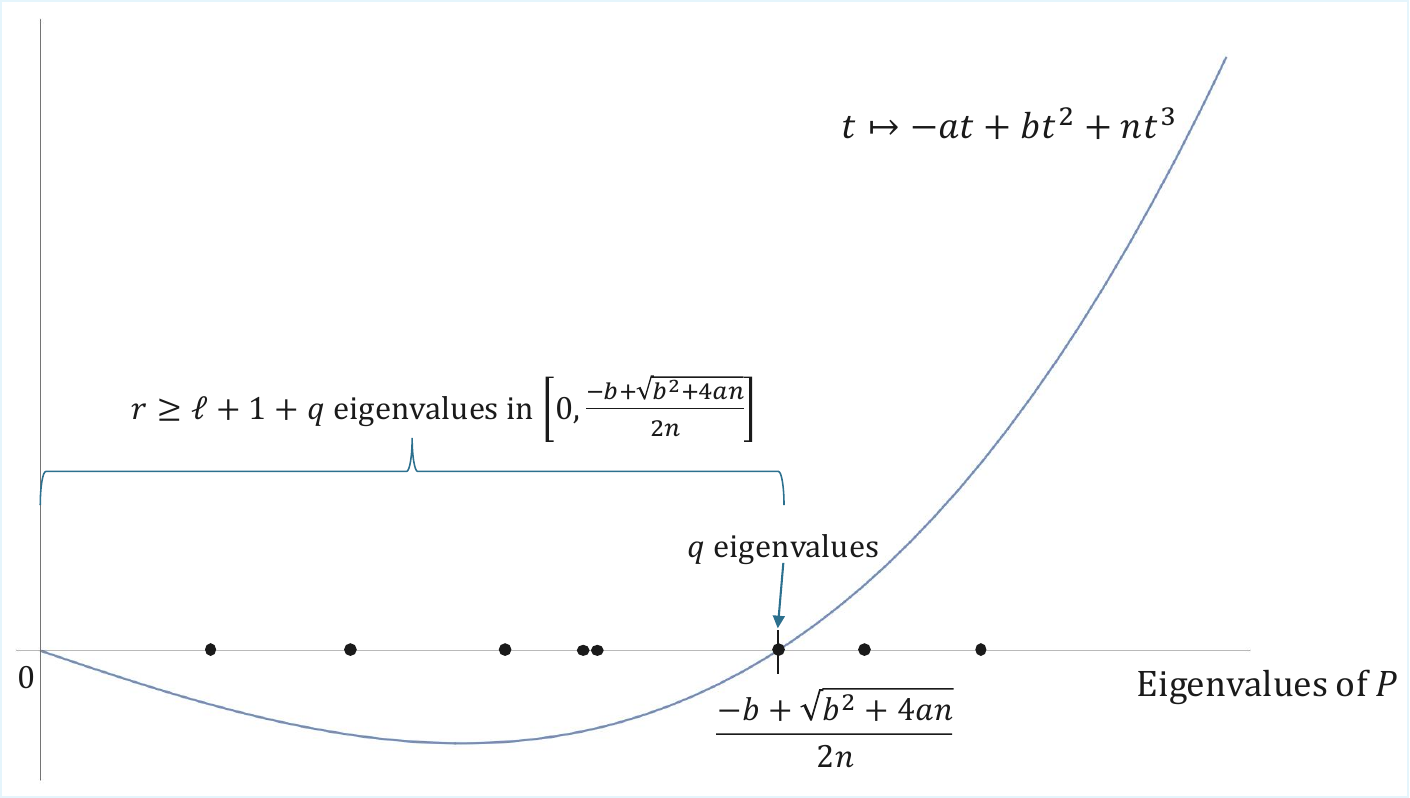}
\caption{The points on the horizontal axis are the eigenvalues of $P$.  The corresponding points on the curve $- a t + b t^2 + n t^3$ are the eigenvalues of $- a P + b P^2 + n P^3$.  See the proof of Theorem~\ref{th:sqrt-n} in Section~\ref{sectioneigenvalueinterlacing}.}
\label{fig:cubicinterlacing}
\end{figure}

Our goal is to determine how many eigenvalues remain nonpositive when $- a P + b P^2 + n P^3$ is restricted to \(\ker(V^\top \gtgram V)\). 
To do so, we apply the classical eigenvalue interlacing theorem, stated below. 
This particular version is a consequence of results in~\citep{horninterlacing1998}; see also Appendix~\ref{app:interlacing}.
\begin{lemma}[Eigenvalue interlacing] \label{equalityinterlacing1}
  Let $M \in \Sym(\dimopt)$ have eigenvalues $m_1 \leq m_2 \leq \cdots \leq m_\dimopt$, and assume $r$ of those eigenvalues are nonpositive: $m_1 \leq m_2 \leq \cdots \leq m_r \leq 0$.
  Let $\calS$ be a $d$-dimensional subspace of $\reals^\dimopt$ with orthonormal basis $S \in \St(\dimopt, d)$.
  Let $\tilde M = S^\top M S$ have eigenvalues $\tilde m_1 \leq \tilde m_2 \leq \cdots \leq \tilde m_d$.\footnote{The matrix $\tilde M$ is called a \emph{compression} of $M$.  It is the restriction of $M$ to $\im(S) = \calS$.}
  If $r-\dimopt+d \geq 1$, then
\begin{align}\label{conclusionequalitiesinterlacing}
\tilde m_1 \leq \tilde m_2 \leq \cdots \leq \tilde m_{r-\dimopt+d} \leq m_r \leq 0.
\end{align}
  If, in addition, $\dim(\calS \cap \ker(M)) < r-\dimopt+d$, then $\tilde m_1 < 0$.
\end{lemma}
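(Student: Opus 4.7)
I would derive the first statement from the classical Cauchy--Poincar\'e interlacing via Courant--Fischer, and then deduce the second by a dimension-counting argument against an $r$-dimensional subspace on which $M\preceq 0$.

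For the first statement, let $q_1,\ldots,q_\dimopt$ be an orthonormal eigenbasis of $M$ with $Mq_j=m_jq_j$, and set $\calE_j=\spann(q_1,\ldots,q_j)$. For each $i\in\{1,\ldots,d\}$, the subspace $\calE_{i+\dimopt-d}\cap\calS$ has dimension at least $i$ by the dimension formula, and $M\preceq m_{i+\dimopt-d}\,I$ on it. Feeding any $i$-dimensional subspace of this intersection into the Courant--Fischer min--max characterization $\tilde m_i=\min_{\dim\calW=i,\,\calW\subseteq\calS}\max_{w\in\calW\setminus\{0\}}\frac{w^\top Mw}{\|w\|^2}$ yields the standard interlacing $\tilde m_i\leq m_{i+\dimopt-d}$. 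Taking $i=r-\dimopt+d$ gives $\tilde m_{r-\dimopt+d}\leq m_r\leq 0$, and monotonicity of $\tilde m_1\leq\cdots\leq\tilde m_{r-\dimopt+d}$ supplies the rest of~\eqref{conclusionequalitiesinterlacing}.

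For the second statement, suppose $\tilde m_1=0$. Since $\tilde m_1$ is the smallest eigenvalue of $\tilde M$, this immediately gives $\tilde M\succeq 0$; combined with the first statement it forces $\tilde m_1=\cdots=\tilde m_{r-\dimopt+d}=0$, and the chain $0=\tilde m_{r-\dimopt+d}\leq m_r\leq 0$ pins down $m_r=0$. Let $\calN=\spann(q_1,\ldots,q_r)$; this is an $r$-dimensional subspace on which $M\preceq 0$ (every $m_i\leq 0$ for $i\leq r$). The dimension formula gives $\dim(\calS\cap\calN)\geq d+r-\dimopt$, so it suffices to show $\calS\cap\calN\subseteq\ker M$. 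For any $w\in\calS\cap\calN$, writing $w=Sv$ with $v\in\reals^d$, we simultaneously have $w^\top Mw=v^\top\tilde Mv\geq 0$ (from $\tilde M\succeq 0$) and $w^\top Mw\leq 0$ (from $w\in\calN$), so $w^\top Mw=0$. Expanding $w=\sum_{i\leq r}\alpha_i q_i$, the identity $\sum_{i\leq r}m_i\alpha_i^2=0$ is a sum of nonpositive terms, so each vanishes; this forces $\alpha_i=0$ whenever $m_i<0$, i.e., $w\in\ker M$.

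The only step I expect to require real care is this last one. The tempting direct approach---taking a basis of $\ker\tilde M\subseteq\reals^d$ (which has dimension at least $r-\dimopt+d$) and lifting through $S$---only yields $w\in\calS$ with $Mw\perp\calS$, which is strictly weaker than $Mw=0$. The fix is to replace $\ker\tilde M$ by the intersection $\calS\cap\calN$: sign-definiteness of $M$ on $\calN$ is precisely what upgrades the weak vanishing $w^\top Mw=0$ to the strong conclusion $w\in\ker M$.
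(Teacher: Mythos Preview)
Your proof is correct and follows essentially the same route as the paper's: derive~\eqref{conclusionequalitiesinterlacing} from the classical Cauchy interlacing bound $\tilde m_i\leq m_{i+\dimopt-d}$, then for the second claim intersect $\calS$ with the span $\calN$ of the first $r$ eigenvectors, use the sandwich $0\leq w^\top Mw\leq 0$ on $\calS\cap\calN$, and upgrade $w^\top Mw=0$ to $Mw=0$ via the sign-definiteness of $M$ on $\calN$. The paper cites the interlacing inequality rather than rederiving it via Courant--Fischer, and phrases the final upgrade slightly differently (invoking $\max_{z'\in\calN}z'^\top Mz'\leq m_r=0$ rather than expanding in the eigenbasis), but the substance is identical.
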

Lemma~\ref{equalityinterlacing1} implies that $M = - a P + b P^2 + n P^3$ restricted to $\ker(V^\top \gtgram V)$ has at least
\begin{align}\label{eq:oneplusq}
  r - \dimopt + \dim\!\left(\ker(V^\top \gtgram V)\right) \geq (\dimgt + 1 + q) - \dimopt + \dim\!\left(\ker(V^\top \gtgram V)\right) \geq 1 + q
\end{align}
{nonpositive} eigenvalues, where we used $\dim\!\left(\ker(V^\top \gtgram V)\right) \geq \dimopt - \dimgt$ due to $\rank(\gtgram) \leq \dimgt$.

To complete the argument, it remains to show that at least one of these nonpositive eigenvalues is in fact \emph{negative}---recall from the discussion before Lemma~\ref{lem:numPevals} that we choose \(u \in \ker(V^\top \gtgram V)\) as an eigenvector of $M$ restricted to $\ker(V^\top \gtgram V)$ corresponding to a negative eigenvalue. 

Define the subspace $\mathcal{Q} := \ker(V^\top \gtgram V) \cap \ker(M)$.
The kernel of $M$ is exactly the direct sum of the eigenspaces of $P$ associated to the eigenvalues $0$ and $\frac{-b + \sqrt{b^2+4 a n}}{2 n}$.
Denoting those two eigenspaces of $P$ by $\calP_0, \calP_1$, respectively, we have $\calP_0 + \calP_1 \supseteq \mathcal{Q}$.
In particular, $\mathcal{Q} + \calP_0 \subseteq \calP_0 + \calP_1$.
Using this and $\dim(\calP_1) = q$ (by definition of $q$), we find
\begin{equation}\label{eq:longderivation}
\begin{split}
\dim(\mathcal{Q}\cap\calP_0)
&= \dim(\mathcal{Q})+\dim(\calP_0)-\dim(\mathcal{Q}+\calP_0) \\
&\ge \dim(\mathcal{Q})+\dim(\calP_0)-\dim(\calP_0+\calP_1) \\
&= \dim(\mathcal{Q})+\dim(\calP_0)-(\dim(\calP_0)+\dim(\calP_1)) \\
&= \dim(\mathcal{Q})-q .
\end{split}
\end{equation}

On the other hand, $\mathcal{Q}\cap\calP_0=\{0\}$.
Indeed, if $u\in\mathcal{Q}\cap\calP_0\subseteq\ker(V^\top \gtgram V)$,
then $Pu=0$, and~\eqref{SigmaUperp2} yields
$0=\Sigma u-Pu=\Sigma u.$
This implies $u=0$ since $\Sigma\succ0$.

Since $\mathcal{Q}\cap\calP_0=\{0\}$, inequality~\eqref{eq:longderivation} yields $\dim(\mathcal{Q})\le q$.
Together with~\eqref{eq:oneplusq}, we have
\[
\dim(\mathcal{Q})\le q
< 1+q
\le r-\dimopt+\dim\!\bigl(\ker(V^\top \gtgram V)\bigr).
\]
Therefore, by Lemma~\ref{equalityinterlacing1}, the restriction of $M = - a P + b P^2 + n P^3$ to
$\ker(V^\top \gtgram V)$ has at least one negative eigenvalue
(corresponding to a descent direction in the Hessian $\nabla^2 g(Z)$).
This contradicts the consequence of 2-criticality stated in~\eqref{SigmaUperp}, hence the proof of Theorem~\ref{th:sqrt-n} is complete.

\section{\boldmath Landscape for Gaussian ground truths, $\dimopt \approx \log n$}\label{sec:isotropic}

In this section, we prove a deterministic benign landscape result (stated as Theorem~\ref{thm:gt_dep_determ} below) and we deduce from it a probabilistic benign landscape result (stated as Corollary~\ref{cor:rand_gt} below).
The latter is the precise statement of Theorem~\ref{th:isotropic}.

Recall $\centeringmat = I_n - \frac{1}{n} \ones \ones^\top$ and $z_i^*$ denotes the $i$th row of $\gt$ {as a column}.
Further define $\mu = \frac{1}{n}\sum_{i=1}^n z_i^*$ so that the $i$th row of $\centeringmat \gt$ is $z_i^* - \mu$ {as a column}.
\begin{theorem}
	\label{thm:gt_dep_determ}
	With $\mu = \frac{1}{n}\sum_{i=1}^n z_i^*$, assume
	\begin{equation}
		\label{eq:gt_dep_cond}
		(\dimopt + 2) \sigma_{\min}^2 (\centeringmat \gt) > 4n \max_{i=1, \ldots, n}~\norm{z_i^* - \mu}^2. 
	\end{equation}
	Then any second-order critical configuration $\pt$ of \eqref{upstairs}, with ground truth $\gt$, is a global minimizer.
\end{theorem}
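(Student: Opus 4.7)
}
The strategy parallels Theorem~\ref{th:sqrt-n}: apply Lemma~\ref{lem:cute} with carefully chosen PSD matrices $T$ and $S$, and combine the resulting inequalities with the first-order criticality identity. First I will reduce: replace $\gt$ with $\centeringmat \gt$ (this leaves $\fu$ unchanged since $\Delta$ sees only the centered part), so $\mu=0$ and $\sigma_{\min}^2(\centeringmat \gt)=\sigma_{\min}^2(\gt)$; center $\pt$; by Lemma~\ref{rankdeficiency} assume $\pt$ has full rank, and polar-factor $\pt = V\Sigma^{1/2}$ with $V\in\St(n,\dimopt)$ and $\Sigma\succ 0$. Write $C=(\Delta^*\circ\Delta)(\ptgram-\gtgram)\in\Cent(n)$.

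The key (primary) step is Lemma~\ref{lem:cute} applied with $T=\Sigma^{-1/2}\tilde u \tilde u^\top \Sigma^{-1/2}$, where $\tilde u\in W:=\ker(V^\top\gtgram V)$ (of dimension $\geq \dimopt-\dimgt$ since $V^\top\gtgram V$ has rank $\leq\dimgt$), and $S=\gtgram$. This choice gives $\pt T\pt^\top = (V\tilde u)(V\tilde u)^\top$, and $\tilde u\in W$ forces $(V\tilde u)^\top\gtgram(V\tilde u)=0$. Combined with first-order criticality ($\pt^\top C=0$ gives $\langle\ptgram,C\rangle=0$, so $\langle\gtgram,C\rangle=-\fu(\pt)$), this collapses the bound to
\[
(\tilde u^\top\Sigma^{-1}\tilde u)\,\fu(\pt) \leq 2(V\tilde u)^\top\Psi(\gtgram)(V\tilde u) \leq 2n\max_i\|z_i^*-\mu\|^2\,\|\tilde u\|^2,
\]
where the final step uses $\Psi(\gtgram)=\tfrac{n}{2}\centeringmat\Diag(\gtgram)\centeringmat+\tfrac{1}{2}\trace(\gtgram)\centeringmat$ together with $\Diag(\gtgram)_{ii}=\|z_i^*-\mu\|^2$ and $\trace(\gtgram)\leq n\max_i\|z_i^*-\mu\|^2$, and the fact that $V\tilde u$ is centered with $\|V\tilde u\|=\|\tilde u\|$.

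The plan is then to close the loop using a secondary application of Lemma~\ref{lem:cute} and the first-order identity. Taking $T=I_\dimopt$ and $S=\centeringmat$ in the lemma (with $\langle\centeringmat,C\rangle=n\trace(\ptgram-\gtgram)$ and $\langle\Delta(\ptgram),\Delta(\centeringmat)\rangle=n\trace(\ptgram)$) yields the trace bound $(\dimopt+2)\trace(\ptgram)\geq \dimopt\trace(\gtgram)$. Meanwhile, $\pt^\top C=0$ multiplied by $V$ on both sides gives the decomposition $\Sigma=V^\top\gtgram V+P$ with $P:=V^\top\Gamma(-C)V\succeq 0$, so for $\tilde u\in W$, $\tilde u^\top\Sigma\tilde u=\tilde u^\top P\tilde u$; this controls $\trace(\Pi_W\Sigma)=\trace(\Pi_W P)$. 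Summing the primary inequality over an orthonormal basis of $W$, applying Cauchy--Schwarz in the form $\trace(\Pi_W\Sigma^{-1})\trace(\Pi_W\Sigma)\geq(\dim W)^2$, and then invoking the trace identity to bound $\trace(P)$ by $\frac{2n\trace(\gtgram)}{\dimopt+2}$, will produce an upper bound on $\fu(\pt)$ scaling like $\tfrac{n^2\max_i\|z_i^*-\mu\|^2\,\trace(\gtgram)}{(\dimopt+2)(\dimopt-\dimgt)}$. The hypothesis $(\dimopt+2)\sigma_{\min}^2(\centeringmat\gt)>4n\max_i\|z_i^*-\mu\|^2$ is what allows us to conclude $\fu(\pt)=0$. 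The main obstacle---and the delicate part of the proof---is precisely this last combining step: pinning down the correct chain of spectral estimates so that $\sigma_{\min}^2(\centeringmat \gt)$ (as opposed to the looser $\trace(\gtgram)$ or $\max_i\|z_i^*-\mu\|^2$) enters the denominator, yielding the clean constant $(\dimopt+2)$ matching the secondary trace bound. This may require sharpening $\lambda_{\max}(P)$ via $P=V^\top\Diag(-C)V$ together with Weyl-type interlacing, rather than the naive $\lambda_{\max}(P)\leq\trace(P)$.
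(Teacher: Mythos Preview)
Your primary inequality (Lemma~\ref{lem:cute} with $S=\gtgram$ and $T=\Sigma^{-1/2}\tilde u\tilde u^\top\Sigma^{-1/2}$ for $\tilde u\in W=\ker(V^\top\gtgram V)$) is correct and gives $(\tilde u^\top\Sigma^{-1}\tilde u)\,\fu(\pt)\leq 2n\max_i\|z_i^*\|^2\|\tilde u\|^2$. Your secondary inequality with $S=\centeringmat$, $T=I_\dimopt$ is also correct and yields $(\dimopt+2)\trace(\ptgram)\geq \dimopt\trace(\gtgram)$; combining with $\trace(P)\leq -\trace(C)=n(\trace(\gtgram)-\trace(\ptgram))$ indeed gives $\trace(P)\leq \tfrac{2n\trace(\gtgram)}{\dimopt+2}$. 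But when you assemble these via Cauchy--Schwarz you obtain only
\[
\fu(\pt)\ \leq\ \frac{2n\max_i\|z_i^*\|^2\cdot\trace(\Pi_W P)}{\dim W}\ \leq\ \frac{4n^2\max_i\|z_i^*\|^2\,\trace(\gtgram)}{(\dimopt+2)(\dimopt-\dimgt)},
\]
a \emph{finite} upper bound, not $\fu(\pt)=0$. This is not a matter of sharpening constants or interlacing: structurally your inequality has the form $\fu(\pt)\leq\text{(data-dependent constant)}$, and no amount of tightening produces zero on the right-hand side. The quantity $\sigma_{\min}^2(\centeringmat\gt)$ never appears because nothing in your chain sees how $\gt$ sits relative to $\im(\pt)$.

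The paper's proof avoids this by a single, different choice in Lemma~\ref{lem:cute}: $T=I_\dimopt$ and $S=(\gt-\pt R^\top)(\gt-\pt R^\top)^\top$ with $R$ the least-squares regressor $\pt^\top(\gt-\pt R^\top)=0$. First-order criticality then collapses the left-hand side to $\fu(\pt)=\|\Delta(\gtgram-\ptgram)\|_\frob^2\geq\|\gtgram-\ptgram\|_\frob^2\geq\sigma_{\min}^2(\gt)\,\|\gt-\pt R^\top\|_\frob^2$, while the right-hand side is at most $2n\max_i\|z_i^*\|^2\,\|\gt-\pt R^\top\|_\frob^2$. Both sides are proportional to $\trace(S)=\|\gt-\pt R^\top\|_\frob^2$, so under hypothesis~\eqref{eq:gt_dep_cond} the inequality forces $\trace(S)=0$ and hence $\fu(\pt)=0$. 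The crucial idea you are missing is a choice of $S$ that makes the inequality \emph{homogeneous} in a quantity that vanishes exactly at optimality.
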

\begin{corollary}[Gaussian ground truth]
	\label{cor:rand_gt}
	Assume the ground truth points (i.e., the rows of $\gt$) are i.i.d.\ Gaussian random vectors with covariance matrix $\Sigma \in \reals^{\dimgt \times \dimgt}$.\footnote{In Section~\ref{sec:isotropic}, $\Sigma$ refers to this covariance; not to be confused with $\Sigma$~\eqref{eq:Vsandstuff} in Section~\ref{sec:sqrt-n}. Likewise, $C$ here refers to a constant, unrelated to the matrix $C$~\eqref{1criticality-and-C} from the previous section.} 
	There exists an absolute constant $C > 0$ such that if
  \begin{align*}
    n \geq C \dimgt && \textrm{ and } && \dimopt \geq C \frac{\trace(\Sigma) + \lambdamax(\Sigma) \log n}{\lambdamin(\Sigma)},
  \end{align*}
	then with probability at least $1 - \frac{C}{n^2}$ (over the distribution of $\gt$), any second-order critical point $\pt$ of~\eqref{upstairs}, with ground truth $\gt$, is a global minimizer.
\end{corollary}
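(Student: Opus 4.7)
The plan is to apply Theorem~\ref{thm:gt_dep_determ} by verifying its deterministic condition~\eqref{eq:gt_dep_cond} with high probability under the Gaussian assumption. Writing $\gt = G\Sigma^{1/2}$ with $G \in \reals^{n \times \dimgt}$ having i.i.d.\ standard Gaussian entries, it suffices to lower-bound $\sigma_{\min}^2(\centeringmat \gt)$ and upper-bound $\max_i \|z_i^* - \mu\|^2$, then combine them.

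\medskip
\noindent\textbf{Step 1 (Lower bound on $\sigma_{\min}(\centeringmat \gt)$).} Since $\Sigma \succ 0$, we have $\sigma_{\min}(\centeringmat \gt) \geq \sigma_{\min}(\centeringmat G)\sqrt{\lambdamin(\Sigma)}$. The matrix $\centeringmat G$ is an $n \times \dimgt$ Gaussian-like matrix (rows are centered Gaussians). A standard Davidson--Szarek concentration bound for the singular values of Gaussian matrices, adapted to incorporate the rank-one correction from $\centeringmat$, gives $\sigma_{\min}(\centeringmat G) \geq \sqrt{n} - O(\sqrt{\dimgt}) - O(\sqrt{\log n})$ with probability at least $1 - 1/n^2$. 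Provided $n \geq C\dimgt$ for a sufficiently large absolute constant $C$, this yields $\sigma_{\min}^2(\centeringmat \gt) \geq c \, n \, \lambdamin(\Sigma)$ for some absolute $c > 0$, with the same probability.

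\medskip
\noindent\textbf{Step 2 (Upper bound on $\max_i \|z_i^* - \mu\|^2$).} Each $z_i^* - \mu$ is Gaussian with covariance $(1 - 1/n)\Sigma \preceq \Sigma$, so $\E\|z_i^* - \mu\|^2 \leq \trace(\Sigma)$. Hanson--Wright applied to the Gaussian quadratic form $\|z_i^* - \mu\|^2$ yields, for any $t \geq 0$,
\begin{equation*}
  \Pr\!\left[\|z_i^* - \mu\|^2 > \trace(\Sigma) + 2\|\Sigma\|_\frob\sqrt{t} + 2\lambdamax(\Sigma)\, t\right] \leq 2 e^{-t}.
\end{equation*}
Setting $t = 4 \log n$ and taking a union bound over $i = 1, \ldots, n$ gives
\begin{equation*}
  \max_{i=1,\ldots,n} \|z_i^* - \mu\|^2 \leq \trace(\Sigma) + C'\lambdamax(\Sigma)\log n
\end{equation*}
with probability at least $1 - 2/n^3$, using $\|\Sigma\|_\frob \leq \sqrt{\dimgt}\,\lambdamax(\Sigma)$ and absorbing constants.

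\medskip
\noindent\textbf{Step 3 (Combine).} On the intersection of the two high-probability events (total failure probability $O(1/n^2)$), condition~\eqref{eq:gt_dep_cond} becomes
\begin{equation*}
  (\dimopt + 2)\, c\, n\, \lambdamin(\Sigma) \;>\; 4 n \bigl(\trace(\Sigma) + C'\lambdamax(\Sigma)\log n\bigr),
\end{equation*}
which holds whenever $\dimopt \geq C (\trace(\Sigma) + \lambdamax(\Sigma)\log n)/\lambdamin(\Sigma)$ for a sufficiently large absolute constant $C$. Applying Theorem~\ref{thm:gt_dep_determ} then concludes: every second-order critical point of~\eqref{upstairs} is a global minimizer on this event. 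Finally, using $\trace(\Sigma) \leq \dimgt \,\lambdamax(\Sigma)$, the hypothesis $\dimopt \gtrsim \kappa(\dimgt + \log n)$ in the informal statement suffices.

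\medskip
\noindent\textbf{Main obstacle.} The cleanest step is the concentration, which is routine. The subtler point is Step~1: $\centeringmat G$ is not quite a standard Gaussian matrix because of the rank-one correction, and the smallest singular value of a tall Gaussian matrix is more delicate than its largest. The right route is either to decompose $\centeringmat G = G - \frac{1}{n}\ones \ones^\top G$ and handle the rank-one perturbation via Weyl-type bounds, or to project onto the $(n-1)$-dimensional subspace orthogonal to $\ones$ where $\centeringmat G$ restricts to an $(n-1) \times \dimgt$ matrix with i.i.d.\ standard Gaussian entries, to which the Davidson--Szarek lower bound applies directly. This second approach is conceptually cleanest and yields the needed $\sigma_{\min}(\centeringmat G) \gtrsim \sqrt{n}$ with exponentially high probability when $n \gg \dimgt$.
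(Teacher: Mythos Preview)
Your proposal is correct and follows essentially the same route as the paper: verify the deterministic condition of Theorem~\ref{thm:gt_dep_determ} by lower-bounding $\sigma_{\min}(\centeringmat \gt)$ and upper-bounding $\max_i\|z_i^*-\mu\|$ via standard Gaussian concentration (Hanson--Wright plus matrix concentration), then combine. The only cosmetic differences are that the paper handles $\centeringmat G$ via the rank-one perturbation bound $\sigma_{\min}(\centeringmat W) \geq \sigma_{\min}(W) - n^{-1/2}\|W^\top\ones\|$ (your first suggested route in the ``Main obstacle'' paragraph) and bounds $\max_i\|z_i^*-\mu\| \leq 2\max_i\|z_i^*\|$ before applying Hanson--Wright to the uncentered points, rather than applying it directly to $z_i^*-\mu$ as you do.
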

The power of $n$ in the probability bound can be arbitrary; this only affects the constants. The $\log n$ term comes from upper bounding the maximum norm of $n$ independent Gaussian vectors.
It will be clear from the proof that the Gaussian assumption can replaced by sub-Gaussian with suitable adaptations.
\begin{proof}[Proof of Corollary~\ref{cor:rand_gt}.]
	We bound the terms in~\eqref{eq:gt_dep_cond} so that we can apply Theorem~\ref{thm:gt_dep_determ}.
Note that
\begin{align}\label{eq:usefulnumber3}
\max_{i=1, \ldots, n}~\norm{z_i^* - \mu}
\leq \max_{i=1, \ldots, n} \norm{z_i^*} + \|\mu\|
		\leq 2 \max_{i=1, \ldots, n} \norm{z_i^*},
\end{align}
because $\|\mu\| \leq \frac{1}{n} \sum_i \|z_i^*\| \leq \max_{i} \norm{z_i^*}$.
	
	Now consider the random distribution of the matrix $\gt$.
	We can write $\gt = W \Sigma^{1/2}$,
	where $W \in \reals^{n \times \dimgt}$ is a matrix whose entries are i.i.d.\ standard normal random variables.
	In what follows, $C > 0$ denotes a sufficiently large absolute constant that may change from one usage to the next, and we always assume $n \geq C \dimgt$ (i.e., $n$ is big enough).
	
	First, we bound the right-hand side of~\eqref{eq:usefulnumber3}.
	We have $z_i^* = \Sigma^{1/2} w_i$, where $w_1, \dots, w_n$ in $\reals^\dimgt$ are i.i.d.\ random vectors with i.i.d.\ standard normal entries.
	The Hanson--Wright inequality~\citep[Thm.~6.3.2]{vershynin_2018} implies
	$$\mathbb{P}\Big(\|z_i^*\| - \trace(\Sigma)^{1/2} > t \Big) \leq 2 \exp\!\Big(-\frac{t^2}{C \lambda_{\max}(\Sigma)}\Big), \quad \quad \forall t \geq 0.$$
Using the identity $(a+b)^2 \leq 2(a^2+b^2)$, and setting $t^2 = C \lambda_{\max}(\Sigma) \log(n^3)$, we have
	$$\mathbb{P}\Big(\|z_i^*\|^2 > 2 (t^2 + \trace(\Sigma)) \Big) \leq 2 \exp\!\Big(-\frac{t^2}{C \lambda_{\max}(\Sigma)}\Big) = \frac{2}{n^3}.$$
	Hence, a union bound implies
	\begin{align}\label{eq:usefulnumber31}
		\max_{i=1, \ldots, n} \|z_i^*\|^2 \leq C \big(\trace(\Sigma) + \lambda_{\max}(\Sigma) \log n\big)
	\end{align}
	with probability at least $1 - \frac{C}{n^2}$. 
		
	Now consider the term $\sigma_{\min}^2 (\centeringmat \gt)$ on the left-hand side of~\eqref{eq:gt_dep_cond}.
	Note that
	\begin{align*}
		\sigmamin(\centeringmat \gt)
		&= \sigmamin (\centeringmat W \Sigma^{1/2}) 
		\geq \lambdamin^{1/2}(\Sigma) \, \sigmamin(\centeringmat W).
	\end{align*}
	Furthermore,
	\begin{align*}
		\sigmamin(\centeringmat W)
		&= \sigmamin(W - n^{-1} \onevec \onevec^\top W) 
		\geq \sigmamin(W) - \frac{1}{\sqrt{n}} \norm{W^\top \onevec}.
	\end{align*}
	Standard matrix concentration inequalities \citep[for example,][Thm.~4.6.1]{vershynin_2018} imply that $\sigmamin(W) \geq \frac{\sqrt{n}}{2}$ with probability at least $1 - \frac{C}{n^2}$, because $n \geq C \dimgt$.
	Since $W^\top \ones$ is a Gaussian vector in $\reals^\dimgt$ with mean zero and covariance $n I_\dimgt$, similar reasoning as above implies
	$$\norm{W^\top \onevec}^2 \leq C n(\dimgt + \log n)$$
	with probability at least $1 - \frac{C}{n^2}$.
	In those events, we obtain 
	\begin{equation}\label{eq:usefulnumber32}
	\begin{split}
		\sigmamin(\centeringmat \gt)
		& \geq \lambdamin^{1/2}(\Sigma) \left( \frac{\sqrt{n}}{2} -  \sqrt{C(\dimgt + \log n) } \right) \\
		&\geq \frac{\sqrt{n}}{4}\lambdamin^{1/2}(\Sigma).
	\end{split}
	\end{equation}
	Plugging the bounds~\eqref{eq:usefulnumber3},~\eqref{eq:usefulnumber31} and~\eqref{eq:usefulnumber32} into Theorem~\ref{thm:gt_dep_determ} and using a union bound on the failure probabilities,
	we obtain the result.
\end{proof}

We now turn to the proof of Theorem~\ref{thm:gt_dep_determ}. 
\begin{proof}[Proof of Theorem~\ref{thm:gt_dep_determ}]
	Let $\pt$ be a 2-critical point of \eqref{upstairs}.	
	Without loss of generality, we can assume $\pt$ is centered.
	We can also assume $\gt$ is centered---if not, replace $\gt$ by $\centeringmat \gt$ in the following.
	
	A rearrangement of the inequality \eqref{eq:cute_1} with $T = I_{\dimopt}$ in Lemma~\ref{lem:cute} yields, for any $n \times n$ positive semidefinite matrix $S$,
\begin{align}\label{eq:andrewconsequenceof2crit}
		(\dimopt + 2) \innersmall{S}{(\Delta^* \circ \Delta)(\gt \gt\transpose - \pt \pt\transpose)} \leq  2 \innersmall{(\Delta^* \circ \Delta) (\gt \gt^\top)}{S}.
\end{align}
	We now choose $S \coloneqq (\gt - \pt R^\top)(\gt - \pt R^\top)\transpose$ for some $R \in \reals^{\dimgt \times \dimopt}$ to be specified later.
	In the following, we bound the quantities on the left- and right-hand sides of~\eqref{eq:andrewconsequenceof2crit}.
	
\paragraph{Bounding the right-hand side.} We can upper bound the quantity on the right-hand side of~\eqref{eq:andrewconsequenceof2crit} as
	\begin{align*}
		\innersmall{(\Delta^* \circ \Delta) (\gt \gt^\top)}{S}
		&\leq \opnormsmall{(\Delta^* \circ \Delta) (\gt \gt^\top)} \trace(S) \\
		&= \opnormsmall{(\Delta^* \circ \Delta) (\gt \gt^\top)} \sqfrobnormsmall{\gt - \pt R^\top}.
	\end{align*}
	By the formula~\eqref{eq:deltastardelta},
	we have
	\begin{align*}
		\opnormsmall{(\Delta^* \circ \Delta) (\gt \gt^\top)}
		&\leq \opnormsmall{\gt \gt^\top} + \frac{n}{2} \opnormsmall{\centeringmat \Diag(\gt \gt^\top) \centeringmat}
		+ \frac{1}{2} \opnormsmall{\trace(\gt \gt^\top) \centeringmat} \\
		&\leq \opnormsmall{\gt}^2 + \frac{n}{2} \max_{i=1, \ldots, n}~\norm{z_i^*}^2 + \frac{1}{2} \frobnormsmall{ \gt}^2.
	\end{align*}
	Using that $\opnormsmall{\gt}^2 \leq \frobnormsmall{\gt}^2
		\leq n \max_{i=1, \ldots, n}~\norm{z_i^*}^2$, we conclude
$$\opnormsmall{(\Delta^* \circ \Delta) (\gt \gt^\top)} \leq 2 n \max_{i=1, \ldots, n}~\norm{z_i^*}^2.$$
	
\paragraph{Bounding the left-hand side.} By expanding $S$ and using the 1-criticality condition $(\Delta^* \circ \Delta)(\gt \gt\transpose - \pt \pt\transpose) \pt = 0$ (see~\eqref{eq:1crit}), the quantity on the left-hand side of~\eqref{eq:andrewconsequenceof2crit} becomes
	\begin{align}
		\innersmall{S}{(\Delta^* \circ \Delta)(\gt \gt\transpose - \pt \pt\transpose)}
		&= \innersmall{\gt \gt\transpose - \pt \pt\transpose}{(\Delta^* \circ \Delta)(\gt \gt\transpose - \pt \pt\transpose)} \nonumber \\
		&= \sqfrobnormsmall{\Delta(\gt \gt\transpose - \pt \pt\transpose)} 
		\stackrel{(1)}{\geq} \sqfrobnormsmall{\gt \gt\transpose - \pt \pt\transpose}. \label{eq:forlaststep}
	\end{align}
	Inequality $\stackrel{(1)}{\geq}$ follows from inequality~\eqref{eq:lowerboundonDeltacircDelta}.
	Now, choose $R \in \reals^{\dimgt \times \dimopt}$ to minimize $\sqfrobnormsmall{\gt - \pt R^\top}$, i.e., such that $\pt\transpose (\gt - \pt R^\top) = 0$.
	Let $P_{\im(\pt)}^\perp$ be the orthogonal projection matrix onto the orthogonal complement of the image of $\pt$.
	Observe $P_{\im(\pt)}^\perp \pt = 0$ and $P_{\im(\pt)}^\perp \gt = \gt - \pt R^\top$.
	Therefore,
	\begin{align*}
		\sqfrobnormsmall{\gt \gt\transpose - \pt \pt\transpose}
		&\geq \sqfrobnormsmall{(\gt \gt\transpose - \pt \pt\transpose) P_{\im(\pt)}^\perp} = \sqfrobnormsmall{\gt \gt\transpose P_{\im(\pt)}^\perp} \\
		&= \sqfrobnormsmall{\gt (\gt - \pt R^\top)\transpose}
		\geq \sigma_{\min}^2(\gt) \sqfrobnormsmall{\gt - \pt R^\top}.
	\end{align*}
	
\paragraph{Wrapping up.} Combining the above, inequality~\eqref{eq:andrewconsequenceof2crit} becomes
	\[
		(\dimopt + 2) \sigma_{\min}^2(\gt) \sqfrobnormsmall{\gt - \pt R^\top} \leq 4 n \Big(\max_{i=1, \ldots, n}~\norm{z_i^*}^2\Big) \sqfrobnormsmall{\gt - \pt R^\top}.
	\]
	Under condition~\eqref{eq:gt_dep_cond}, and recalling that both $\pt$ and $\gt$ are centered (i.e., $\centeringmat \gt = \gt$ and $\mu = 0$), we must have $\sqfrobnormsmall{\gt - \pt R^\top} = 0$. 
	It follows that $S = 0$, and hence inequality~\eqref{eq:forlaststep} reveals that $\sqfrobnormsmall{\gt \gt\transpose - \pt \pt\transpose} = 0$ also.
\end{proof}

\begin{remark}[Towards incomplete graphs]
Recall that Lemma~\ref{lem:cute} holds for incomplete graphs (with $\Delta$ suitably modified).
Therefore, the proof of Theorem~\ref{thm:gt_dep_determ} uses that the graph is complete in only two places: (i) to establish the rigidity-like identity~\eqref{eq:lowerboundonDeltacircDelta}, and (ii) to establish an upper bound on $\opnormsmall{(\Delta^* \circ \Delta) (\gt \gt^\top)}$.
\end{remark}

\section{Examples of spurious configurations}\label{sec:counterexamples}

This section exhibits \emph{simple} counterexamples showing that, without relaxation,~\eqref{eq:snl} can admit strict spurious second-order critical (or 2-critical) points even when the graph is complete.
Here, ``strict'' means that the Hessian is positive definite modulo translation and rotation symmetries.
In particular, strict 2-critical points are local minimizers.
Moreover, they are stable under sufficiently small perturbations (a consequence of the implicit function theorem).
This implies that the set of ground truth configurations for which~\eqref{eq:snl} admits a spurious local minimizer has positive measure.

Concretely, the objective $g$ is invariant under both translations, $\pt \mapsto \pt + \ones v^\top$ for any $v \in \reals^\dimopt$, and rotations, $\pt \mapsto \pt Q$ for any orthogonal matrix $Q \in \mathrm{O}(\dimopt)$.
As a result, if $\pt$ is a full-rank 2-critical configuration, then the kernel of $\nabla^2 g$ contains the following two subspaces, which can also be verified directly from equation~\eqref{eq:derivatives}:\footnote{We recall that equation~\eqref{eq:derivatives} remains valid when the operator $\Delta$ is interpreted as a linear map $\Delta \colon \mathrm{Sym}(n)\to \mathrm{Cent}(n)$, as specified in definition~\eqref{EDMmap}.}
\begin{equation}\label{eq:symmetrysubspaces}
\begin{split}
\textbf{(translations)} &\qquad\qquad \big\{\ones v^\top : v \in \reals^\dimopt\big\}, \qquad\quad \text{and} \\
\textbf{(rotations)} &\qquad\qquad \big\{\pt \Omega : \Omega \in \Skew(\dimopt)\big\}.
\end{split}
\end{equation}
These subspaces have dimensions $\dimopt$ and $\dimopt(\dimopt-1)/2$, respectively.
Since they intersect only at zero, their sum is a subspace of dimension $\dimopt + \dimopt(\dimopt-1)/2 = \dimopt(\dimopt+1)/2$ contained in the kernel of $\nabla^2 g$.
Therefore, to verify that a full-rank 2-critical configuration $\pt$ is \emph{strict}, it suffices to show the kernel of $\nabla^2 \fu$ has dimension exactly $\dimopt(\dimopt+1)/2$.

In Section~\ref{sec:generalconstruction}, we present a general construction that produces spurious strict 2-critical points, along with a supporting proposition.
In Section~\ref{sec:explicitexamples}, we apply this proposition to construct explicit configurations that are strict spurious 2-critical points.

\subsection{A general construction}\label{sec:generalconstruction}

Our examples {all} follow the same general construction.
Let $\dimopt = \dimgt$ and $n \geq \dimgt + 2$.
Choose an affine subspace $\mathcal{L} \subseteq \reals^\dimgt$ of dimension $\dimgt - 1$, and consider (for now) \emph{any} set of $n-2$ points $z_1^*, \ldots, z_{n-2}^*$ in $\mathcal{L}$.
Let $z_{n-1}^* \in \reals^\dimgt$ be any point not contained in $\mathcal{L}$.
Define $z_n^*$ to be the \emph{reflection} of $z_{n-1}^*$ across $\mathcal{L}$, i.e., the reflection fixes $\mathcal{L}$.
Let $\gt \in \reals^{n \times \dimgt}$ be the matrix whose rows are $z_1^*, \ldots, z_n^*$.
Lastly, define a configuration $\pt \in \reals^{n \times \dimgt}$ which is identical to $\gt$, except its final row is $z_{n} = z_{n-1}^*$.
In particular, the last two rows of $\pt$ are identical.
Figure~\ref{fig:counterexample} gives an example of configurations $\gt$ and $\pt$.

\begin{proposition}\label{generalcounterexample}
  Consider the configurations $\gt$ and $\pt$ defined above.
  Then $\pt$ is a spurious first-order critical point for~\eqref{eq:snl} with ground truth $\gt$, $\dimopt = \dimgt$, and the complete graph.
  Moreover, if
  \begin{align}\label{conditioncounterexample}
    \sum_{i=1}^{n-2} (z_i^* - z_{n-1}^*) (z_i^* - z_{n-1}^*)^\top \succeq \|z_{n-1}^* - z_{n}^*\|^2 I_\dimgt
  \end{align}
  then $\pt$ is a spurious second-order critical point.
  If inequality~\eqref{conditioncounterexample} is strict (i.e., $\succ$), then $\pt$ is a strict spurious second-order critical point.
\end{proposition}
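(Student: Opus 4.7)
The plan is to compute $\nabla \fu$ and $\nabla^2 \fu$ at $\pt$ directly, exploiting the reflection symmetry built into the construction. Since $z_1^*, \ldots, z_{n-2}^* \in \mathcal{L}$ and $z_n^*$ is the reflection of $z_{n-1}^*$ across $\mathcal{L}$, we have $\|z_i^* - z_{n-1}^*\| = \|z_i^* - z_n^*\|$ for every $i \leq n-2$. Hence the EDMs of $\pt$ and $\gt$ agree on every edge except $\{n-1,n\}$: in $\pt$ the distance is $0$, while in $\gt$ it equals $d := \|z_{n-1}^* - z_n^*\|$. Substituting into~\eqref{eq:adjoint} gives
\[
C \;:=\; (\Delta^* \circ \Delta)(\ptgram - \gtgram) \;=\; -\tfrac{d^2}{2}(e_{n-1} - e_n)(e_{n-1} - e_n)^\top.
\]
Since $z_{n-1} = z_n$ in $\pt$, $(e_{n-1} - e_n)^\top \pt = 0$, so $C\pt = 0$ and $\pt$ satisfies~\eqref{eq:1crit}. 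It is spurious because $\fu(\pt) = \sqfrobnormsmall{\Delta(\ptgram - \gtgram)} = d^4/2 > 0$.

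Next I would address 2-criticality. Combining~\eqref{eq:2crit} with the identity~\eqref{eq:identityinnerproduct}, the condition reduces to
\[
\sum_{i < j} \langle \dot z_i - \dot z_j,\, z_i - z_j\rangle^2 \;\geq\; \tfrac{d^2}{2}\|\dot z_{n-1} - \dot z_n\|^2 \qquad \text{for all } \dot \pt,
\]
where the $z_i$ are rows of $\pt$. The $(n-1,n)$ term on the left vanishes since $z_{n-1} = z_n$. Dropping the nonnegative terms with $i, j \leq n-2$ and applying the elementary inequality $a^2 + b^2 \geq \tfrac12(a-b)^2$ with $a = \langle \dot z_i - \dot z_{n-1}, w_i\rangle$ and $b = \langle \dot z_i - \dot z_n, w_i\rangle$, where $w_i := z_i^* - z_{n-1}^*$, the left-hand side is bounded below by $\tfrac{1}{2}(\dot z_{n-1} - \dot z_n)^\top M\, (\dot z_{n-1} - \dot z_n)$ with $M := \sum_{i=1}^{n-2} w_i w_i^\top$. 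The hypothesis~\eqref{conditioncounterexample}, i.e., $M \succeq d^2 I_\dimgt$, then closes the inequality.

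For strictness under $M \succ d^2 I_\dimgt$, I would show that the Hessian kernel at $\pt$ coincides with the symmetry subspace~\eqref{eq:symmetrysubspaces}. Any $\dot \pt$ making the Hessian's quadratic form vanish must make all of the above inequalities tight simultaneously: tightness in $M \succ d^2 I_\dimgt$ forces $\dot z_{n-1} = \dot z_n$; tightness of $a^2 + b^2 \geq \tfrac12(a-b)^2$ (i.e., $a = -b$) then forces $\langle \dot z_i - \dot z_{n-1}, w_i\rangle = 0$ for $i \leq n-2$; and vanishing of the discarded terms gives $\langle \dot z_i - \dot z_j, z_i^* - z_j^*\rangle = 0$ for $i, j \leq n-2$. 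Together, these read $\langle \dot z_i - \dot z_j, z_i^* - z_j^*\rangle = 0$ for all $i, j \in \{1, \ldots, n-1\}$. Since $M \succ 0$ forces $\{z_1^*, \ldots, z_{n-1}^*\}$ to affinely span $\reals^\dimgt$, a standard infinitesimal-rigidity argument---translate so that $z_{n-1}^* = 0$ and observe that $z_j^* \mapsto \dot z_j - \dot z_{n-1}$ extends to a skew-symmetric linear map on $\reals^\dimgt$---yields $\dot z_i = v - \Omega z_i^*$ for some $v \in \reals^\dimgt$ and $\Omega \in \Skew(\dimgt)$. Since the $n$-th row of $\pt$ is $z_{n-1}^*$ and $\dot z_n = \dot z_{n-1}$, the same formula extends to $i = n$, placing $\dot \pt$ in~\eqref{eq:symmetrysubspaces}.

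The principal technical obstacle is this strictness step: one must extract information from three equality cases at once and then invoke infinitesimal rigidity on the $(n-1)$-point sub-configuration, with the condition $M \succ d^2 I_\dimgt$ providing both the forcing inequality on $\dot z_{n-1} - \dot z_n$ and (through $M \succ 0$) the affine spanning needed for rigidity. The remaining computations follow mechanically from the reflection symmetry and the explicit formulas for $\Delta$ and $\Delta^*$.
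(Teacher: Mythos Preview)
Your proof is correct and follows the same overall strategy as the paper: compute $C$ explicitly from the reflection symmetry, verify $C\pt = 0$, and then analyze the Hessian quadratic form term by term. The first-order and second-order parts are essentially identical to the paper's argument, though the paper uses the exact identity $(a-b)^2 + (a-c)^2 = \tfrac12(b-c)^2 + \tfrac12(b+c-2a)^2$ where you use the inequality $a^2+b^2 \geq \tfrac12(a-b)^2$; both suffice.

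The strictness step is where the two diverge in style. The paper absorbs the strict margin from $M \succ d^2 I_\dimgt$ into a single inequality $\langle \dot\pt, \nabla^2 \fu(\pt)[\dot\pt]\rangle \geq \tfrac{c}{2}\sqfrobnormsmall{\Delta(\dot\pt\pt^\top + \pt\dot\pt^\top)}$, then invokes invertibility of $\Delta$ on $\Cent(n)$ to get $\centeringmat(\dot\pt\pt^\top + \pt\dot\pt^\top)\centeringmat = 0$, and finishes with a short matrix-level argument (showing $\centeringmat\dot\pt\pt^\top\centeringmat$ is skew and factoring through $\pt_c(\pt_c^\top\pt_c)^{-1}$). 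You instead chase the three equality cases in your chain of inequalities directly, reduce to $\langle \dot z_i - \dot z_j, z_i^* - z_j^*\rangle = 0$ on the first $n-1$ points, and close with an elementary infinitesimal-rigidity argument. Your route is more geometric and avoids appealing to properties of $\Delta$; the paper's route is more algebraic and handles all $n$ points uniformly without splitting off the $n$th row. Both are clean and of comparable length.
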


\begin{proof}
\textbf{(1-criticality)} By construction, $\pt$ and $\gt$ differ only in their last point, so the pairwise distances corresponding to $\pt$ and $\gt$ differ only between the last two points.
Therefore, all entries of $\Delta(\pt \pt^\top - \gt^{} \gt^\top)$ are zero except for the ones at indices $(n - 1, n)$ and $(n, n - 1)$, which are equal to $\frac{1}{2}(\|z_{n-1} - z_{n}\|^2 - \|z_{n-1}^* - z_{n}^*\|^2) = -\frac{1}{2} \|z_{n-1}^* - z_{n}^*\|^2$.
For brevity, define $\alpha := \|z_{n-1}^* - z_{n}^*\|^2$.

By~\eqref{eq:adjoint}, all entries of $(\Delta^* \circ \Delta)(\pt \pt^\top - \gt \gt^\top)$ are zero except for the bottom right $2 \times 2$ block, which equals $\frac{\alpha}{2} \left( \begin{smallmatrix} -1 & \phantom{-}1 \\ \phantom{-}1 & -1 \end{smallmatrix}\right)$.
Using~\eqref{eq:derivatives} and that the last two rows of $\pt$ are identical, we conclude that $\nabla \fu(\pt) = (\Delta^* \circ \Delta)(\pt\pt^\top - \gt\gt^\top) \pt$ equals zero.
Therefore, $\pt$ is first-order critical.
\\
\\
\noindent \textbf{(2-criticality)} Reusing the above computations, we find that
\begin{equation}\label{eq:Hess1}
  \begin{split}
    \big\langle \dot \pt {\dot \pt}^\top,  (\Delta^* \circ \Delta)(\ptgram - \gtgram) \big\rangle
    &=
      \frac{\alpha}{2} \left \langle
        \begin{bmatrix} \|{\dot z}_{n-1}\|^2 & \langle {\dot z}_{n-1}, {\dot z}_{n} \rangle \\ \langle {\dot z}_{n-1}, {\dot z}_{n} \rangle & \|{\dot z}_{n}\|^2 \end{bmatrix},
        \begin{bmatrix} -1 & \phantom{-}1 \\ \phantom{-}1 & -1 \end{bmatrix}
      \right \rangle \\
    &= - \frac{\alpha}{2} \|{\dot z}_{n-1} - {\dot z}_{n}\|^2,
  \end{split}
\end{equation}
for all $\dot \pt \in \reals^{n \times \dimgt}$.
Additionally, using the identity~\eqref{eq:identityinnerproduct} and that the last two rows of $\pt$ both equal $z_{n-1}^*$,
\begin{equation}\label{eq:Hess2}
  \begin{split}
    \frac{1}{2} \sqfrobnormsmall{\Delta(\dot \pt \pt^\top + \pt \dot \pt^\top)}
    &= \frac{1}{2} \sum_{i,j = 1}^{n} \langle {\dot z}_i - {\dot z}_j, z_i - z_j\rangle^2 \\
    &= \frac{1}{2} \sum_{i,j = 1}^{n-2} \langle {\dot z}_i - {\dot z}_j, z_i^* - z_j^* \rangle^2
      + \sum_{i=1}^{n-2} \sum_{j=n-1}^{n} \langle {\dot z}_i - {\dot z}_j, z_i^* - z_{n-1}^*\rangle^2.
  \end{split}
\end{equation}
For brevity, define $w_i := z_i^* - z_{n-1}^*$.
Using the identity $(a-b)^2 + (a-c)^2 = \frac{1}{2} (b-c)^2 + \frac{1}{2} (b+c-2a)^2$ for $a,b,c \in \reals$, we can rewrite the second term on the right-hand side:
\begin{equation}\label{eq:Hess3}
  \begin{split}
    \sum_{i=1}^{n-2} \sum_{j=n-1}^{n} \langle {\dot z}_i - {\dot z}_j, w_i\rangle^2
    &=
      \sum_{i=1}^{n-2} \langle {\dot z}_i - {\dot z}_{n-1}, w_i\rangle^2 + \langle {\dot z}_i - {\dot z}_{n}, w_i\rangle^2
    \\
    &=
      \frac{1}{2} \sum_{i=1}^{n-2} \langle {\dot z}_{n-1} - {\dot z}_{n}, w_i \rangle^2 + \langle {\dot z}_{n-1} + {\dot z}_{n} - 2 {\dot z}_i, w_i \rangle^2.
  \end{split}
\end{equation}
Using the expression~\eqref{eq:derivatives} for $\nabla^2 g$, and combining equations~\eqref{eq:Hess1},~\eqref{eq:Hess2} and~\eqref{eq:Hess3},
\begin{align*}
  \langle \dot \pt, \nabla^2 \fu(\pt)[\dot \pt]\rangle
  =&
     \big\langle \dot \pt {\dot \pt}^\top,  (\Delta^* \circ \Delta)(\ptgram - \gtgram) \big\rangle
     + \frac{1}{2} \sqfrobnormsmall{\Delta(\dot \pt \pt^\top + \pt \dot \pt^\top)} \\
  =&
     \frac{1}{2} ({\dot z}_{n-1} - {\dot z}_{n})^\top\bigg( -\alpha I_\dimgt + \sum_{i=1}^{n-2} w_i w_i^\top \bigg) ({\dot z}_{n-1} - {\dot z}_{n})
  \\
   &+ \frac{1}{2} \sum_{i,j = 1}^{n-2} \langle {\dot z}_i - {\dot z}_j, z_i^* - z_j^* \rangle^2
     + \frac{1}{2} \sum_{i=1}^{n-2} \langle {\dot z}_{n-1} + {\dot z}_{n} - 2 {\dot z}_i, w_i \rangle^2.
\end{align*}
Under assumption~\eqref{conditioncounterexample}, we conclude $\langle \dot \pt, \nabla^2 \fu(\pt)[\dot \pt]\rangle \geq 0$, and so $\pt$ is a 2-critical point.

Assuming~\eqref{conditioncounterexample} holds strictly, it remains to show that $\pt$ is a \emph{strict} 2-critical configuration.
It is enough to show that if $\dot \pt$ satisfies $\langle \dot \pt, \nabla^2 \fu(\pt)[\dot \pt]\rangle = 0$, then it lies in the sum of the subspaces~\eqref{eq:symmetrysubspaces}.
There exists $c \in (0, 1]$ such that $(1 - c) \sum_{i=1}^{n-2} w_i w_i^\top \succeq \alpha I_\dimgt$.\footnote{Indeed, if $A \succ B \succeq 0$, then define $c = \lambda_{\min}(A-B) / \lambda_{\max}(A)$.}
It follows that
\begin{align*}
  ({\dot z}_{n-1} - {\dot z}_{n})^\top\bigg( -\alpha I_\dimgt + \sum_{i=1}^{n-2} w_i w_i^\top \bigg) ({\dot z}_{n-1} - {\dot z}_{n}) \geq c \sum_{i=1}^{n-2} \langle {\dot z}_{n-1} - {\dot z}_{n}, w_i \rangle^2.
\end{align*}
Combining this with our work above, we deduce
\begin{align*}
  \langle \dot \pt, \nabla^2 \fu(\pt)[\dot \pt]\rangle
  \geq
  \frac{c}{2} \sqfrobnormsmall{\Delta(\dot \pt \pt^\top + \pt \dot \pt^\top)} && \text{for all $\dot \pt \in \reals^{n \times \dimgt}$.}
\end{align*}
Therefore, if $\dot \pt$ satisfies $\langle \dot \pt, \nabla^2 \fu(\pt)[\dot \pt]\rangle = 0$, then $\Delta(\dot \pt \pt^\top + \pt \dot \pt^\top) = 0$.
By invertibility of $\Delta \colon \Cent(n) \to \Holl(n)$, this implies $\centeringmat (\dot \pt \pt^\top + \pt \dot \pt^\top) \centeringmat = 0$.
In other words, defining $\Xi := \centeringmat \dot \pt \pt^\top \centeringmat$, we have $\Xi = \centeringmat \dot \pt \pt^\top \centeringmat \in \Skew(n)$.

Define $\pt_c = \centeringmat \pt$, and observe assumption~\eqref{conditioncounterexample} implies $\rank(\pt_c) = \dimgt$.
We then have $\centeringmat \dot \pt = \Xi \pt_c^{} (\pt_c^\top \pt_c^{})^{-1}$, i.e., $\dot \pt^\top \centeringmat = - (\pt_c^\top \pt_c^{})^{-1} \pt_c^\top \Xi $, which gives
$$\centeringmat^{} \dot \pt = (\centeringmat^{} \dot \pt \pt_c^\top) \pt_c^{} (\pt_c^\top \pt_c^{})^{-1}  = - (\pt_c^{} \dot \pt^\top \centeringmat^{}) \pt_c^{} (\pt_c^\top \pt_c^{})^{-1} = \pt_c^{} (\pt_c^\top \pt_c)^{-1} \pt_c^\top \Xi \pt_c^{} (\pt_c^\top \pt_c^{})^{-1}.$$
We conclude that $\centeringmat \dot \pt = \centeringmat \pt \Omega$ for some $\Omega \in \Skew(\dimgt)$, which implies that $\dot \pt$ lies in the sum of the subspaces~\eqref{eq:symmetrysubspaces} with $\dimopt = \dimgt$.
\end{proof}

\subsection{Explicit examples}\label{sec:explicitexamples}

We start off with a counterexample in the plane.
Then we turn to higher dimensions, where we establish the minimal number of points necessary for spurious strict 2-critical configurations to exist.

\subsection*{In the plane}\label{subsec:counter-example-2d}

Let $n = 7$, $\dimgt = \dimopt = 2$, and consider the ground truth configuration $\gt$ and the configuration $\pt$ defined as follows:
\begin{align}\label{configurationplaneex}
  \gt =
  \begin{bmatrix}
     -2 & -1 & 0 & 1 & 2 & 0 & 0\\
     0 & 0 & 0 & 0 & 0 & 1 & -1
  \end{bmatrix}^\top
  && \text{and} &&
  \pt =
  \begin{bmatrix}
    -2 & -1 & 0 & 1 & 2 & 0 & 0 \\
    0 & 0 & 0 & 0 & 0 & 1 & 1
  \end{bmatrix}^\top.
\end{align}
Figure~\ref{fig:counterexample} represents these configurations.
\begin{proposition}\label{counterexampleplane}
  The configuration $\pt$ given by~\eqref{configurationplaneex} is a strict spurious second-order critical point for~\eqref{eq:snl} with ground truth $\gt$, $\dimopt = \dimgt$, and the complete graph.
\end{proposition}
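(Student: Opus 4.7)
The plan is to apply Proposition~\ref{generalcounterexample} directly, so I only need to check that the configuration~\eqref{configurationplaneex} fits its template and that the strict version of the curvature condition~\eqref{conditioncounterexample} holds.

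First I would verify the structural setup. Take $\mathcal{L} \subseteq \reals^2$ to be the $x$-axis (a $(\dimgt-1)$-dimensional affine subspace), and observe that the first $n - 2 = 5$ rows of $\gt$ all have second coordinate $0$, so $z_1^*, \ldots, z_5^* \in \mathcal{L}$. The sixth point $z_6^* = (0, 1)$ is off $\mathcal{L}$, and $z_7^* = (0, -1)$ is exactly its reflection across the $x$-axis. Finally, $\pt$ agrees with $\gt$ in its first $n-1 = 6$ rows, and its last row is $z_7 = z_6^* = (0, 1)$. Hence $\pt, \gt$ fit the general construction of Section~\ref{sec:generalconstruction}, and Proposition~\ref{generalcounterexample} already yields that $\pt$ is first-order critical.

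Next I would check the strict inequality~\eqref{conditioncounterexample}. Setting $w_i = z_i^* - z_6^*$ for $i=1,\ldots,5$, a direct computation gives $w_1 = (-2,-1)$, $w_2 = (-1,-1)$, $w_3 = (0,-1)$, $w_4 = (1,-1)$, $w_5 = (2,-1)$, and hence
\begin{equation*}
\sum_{i=1}^{5} w_i w_i^\top = \begin{pmatrix} 10 & 0 \\ 0 & 5 \end{pmatrix}.
\end{equation*}
Meanwhile $\|z_6^* - z_7^*\|^2 = 4$, so $\sum_{i=1}^5 w_i w_i^\top - 4 I_2 = \diag(6, 1) \succ 0$. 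Thus the strict version of~\eqref{conditioncounterexample} holds, and Proposition~\ref{generalcounterexample} concludes that $\pt$ is a strict spurious second-order critical point, as claimed. The main obstacle is essentially trivial: the entire content of the proof is the numerical verification above, since all of the real work has already been carried out in Proposition~\ref{generalcounterexample}.
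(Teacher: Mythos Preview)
Your proposal is correct and follows essentially the same approach as the paper: both invoke Proposition~\ref{generalcounterexample} and verify the strict version of~\eqref{conditioncounterexample} by computing $\sum_{i=1}^{5} w_i w_i^\top = \begin{pmatrix} 10 & 0 \\ 0 & 5 \end{pmatrix} \succ 4 I_2$. Your additional verification of the structural setup (that the configuration fits the template of Section~\ref{sec:generalconstruction}) is a nice explicit touch that the paper leaves implicit.
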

\begin{proof}
  We appeal to Proposition~\ref{generalcounterexample}.
  It is enough to verify condition~\eqref{conditioncounterexample} holds strictly:
  \begin{align*}
    \sum_{i=1}^{n-2} (z_i^* - z_{n-1}^*) (z_i^* - z_{n-1}^*)^\top
    = \begin{bmatrix} 10 & 0 \\ 0 & 5 \end{bmatrix}
    \succ
    4 I_2 =
    \|z_{n-1}^* - z_{n}^*\|^2 I_2. && \qedhere
  \end{align*}
\end{proof}

A simple way to generalize the construction above to any $n \geq 7$ is to add points along the horizontal axis.
The same construction as above---excluding the point at the origin---yields a \emph{non-strict} spurious second-order critical point (with $n = 6$).
For $n \leq 5$, this type of construction does not yield spurious second-order critical configurations.

\subsection*{In higher dimensions: fewest points for non-benign landscape}

Given a dimension $\dimgt$, what is the minimum number of points $n$ for which~\eqref{eq:snl}, with the complete graph, can admit a strict spurious second-order critical point?
As shown in Section~\ref{sec:lowrank}, $n$ must be at least $\dimgt + 2$.
We now show that for dimensions $\dimgt \geq 5$, strict spurious second-order critical points do exist when $n = \dimgt + 2$.

Define the ground truth configuration $\gt \in \reals^{(\dimgt + 2) \times \dimgt}$ as the set of standard basis vectors $e_1, \ldots, e_\dimgt \in \reals^\dimgt$, the origin, and the point $\frac{2}{\dimgt} \ones$ (the reflection of the origin across the standard simplex).
Define the configuration $\pt \in \reals^{(\dimgt + 2) \times \dimgt}$ as the same standard basis vectors together with two copies of the origin.
Geometrically, the first $\dimgt$ points are the vertices of the standard simplex, and the remaining two points lie on the line orthogonal to the simplex and passing through its centroid.

\begin{proposition}\label{originalcounterexample}
  If $\dimgt \geq 5$, then $\pt$ as just given is a strict spurious second-order critical point for~\eqref{eq:snl} with ground truth $\gt$, $\dimopt = \dimgt$, and the complete graph.
  If $\dimgt =4$, it is a non-strict spurious second-order critical point.
\end{proposition}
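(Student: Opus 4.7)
The plan is to recognize the configuration as a specific instance of the general construction of Section~\ref{sec:generalconstruction}, and then to apply Proposition~\ref{generalcounterexample}. The only real work is to identify the correct affine subspace of reflection and to verify the positive semidefinite condition~\eqref{conditioncounterexample} with equality for $\dimgt = 4$ and strict inequality for $\dimgt \geq 5$.

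First, I would take the affine subspace $\mathcal{L} = \{x \in \reals^\dimgt : \onevec^\top x = 1\} \subset \reals^\dimgt$, which has dimension $\dimgt - 1$ and contains the standard basis vectors $e_1, \ldots, e_\dimgt$; these are the first $n - 2 = \dimgt$ rows of $\gt$. The last two rows are $z_{n-1}^* = 0$ and $z_n^* = \frac{2}{\dimgt} \onevec$. The unit normal to $\mathcal{L}$ is $\onevec/\sqrt{\dimgt}$, and the signed distance from the origin to $\mathcal{L}$ is $1/\sqrt{\dimgt}$, so the reflection of $0$ across $\mathcal{L}$ is $\frac{2}{\sqrt{\dimgt}} \cdot \frac{\onevec}{\sqrt{\dimgt}} = \frac{2}{\dimgt} \onevec = z_n^*$. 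Neither $z_{n-1}^*$ nor $z_n^*$ lies on $\mathcal{L}$. Hence $(\gt, \pt)$ is exactly the pair produced by the general construction preceding Proposition~\ref{generalcounterexample}, and we may invoke that proposition.

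Next, I would compute both sides of condition~\eqref{conditioncounterexample}. Since $z_{n-1}^* = 0$ and $z_i^* = e_i$ for $i = 1, \ldots, \dimgt$, we have
\begin{align*}
    \sum_{i=1}^{n-2} (z_i^* - z_{n-1}^*)(z_i^* - z_{n-1}^*)^\top = \sum_{i=1}^{\dimgt} e_i e_i^\top = I_\dimgt,
\end{align*}
while $\|z_{n-1}^* - z_n^*\|^2 = \big\|\tfrac{2}{\dimgt}\onevec\big\|^2 = \tfrac{4}{\dimgt}$. Therefore condition~\eqref{conditioncounterexample} becomes $I_\dimgt \succeq \tfrac{4}{\dimgt}\, I_\dimgt$, which holds with strict inequality whenever $\dimgt \geq 5$ and with equality when $\dimgt = 4$.

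Applying Proposition~\ref{generalcounterexample} concludes the proof: for $\dimgt \geq 5$ the configuration $\pt$ is a strict spurious second-order critical point, and for $\dimgt = 4$ it is a (non-strict) spurious second-order critical point. There is no real obstacle here; the only point to double-check is that the normalization of the reflection point $\frac{2}{\dimgt}\onevec$ is correct (so that $\gt$ really does fit the symmetric-reflection template) and that the PSD inequality is computed on the correct vectors $z_i^* - z_{n-1}^*$ rather than $z_i^* - z_n^*$.
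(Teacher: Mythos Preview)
Your proposal is correct and follows essentially the same approach as the paper: apply Proposition~\ref{generalcounterexample} and verify condition~\eqref{conditioncounterexample} reduces to $I_\dimgt \succeq \frac{4}{\dimgt} I_\dimgt$. You additionally spell out the identification of the affine hyperplane $\mathcal{L}$ and the reflection check, which the paper leaves implicit in the setup preceding the proposition; this extra care is fine and changes nothing substantive.
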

\begin{proof}
  We appeal to Proposition~\ref{generalcounterexample}.
  It is enough to verify condition~\eqref{conditioncounterexample}:
  \begin{align*}
    \sum_{i=1}^{n-2} (z_i^* - z_{n-1}^*) (z_i^* - z_{n-1}^*)^\top
    = \sum_{i=1}^{\dimgt} e_i e_i^\top = I_\dimgt
    \succeq
    \frac{4}{\dimgt} I_\dimgt =
    \|z_{n-1}^* - z_{n}^*\|^2 I_\dimgt,
  \end{align*}
  provided $\dimgt \geq 4$.
  If $\dimgt \geq 5$, then the inequality is strict.
\end{proof}

For dimensions $\dimgt \in \{3, 4\}$ and $n=7$ points, consider a similar construction where the ground truth consists of the $\dimgt$ standard basis vectors, $5-\dimgt$ additional points placed at the centroid of the simplex $\frac{1}{\dimgt} \ones$, one point located at $\frac{1}{2\dimgt} \ones$, and the final point located at $\frac{3}{2\dimgt} \ones$ (the reflection of $\frac{1}{2\dimgt} \ones$ across the simplex).
Define $\pt$ as the same set of points, except with $\frac{3}{2\dimgt} \ones$ replaced by $\frac{1}{2\dimgt} \ones$.
One checks (e.g., using Proposition~\ref{generalcounterexample}) that the corresponding configuration with $n=7$ points is strict second-order critical.

Combining this observation with the counterexamples from Propositions~\ref{counterexampleplane} and~\ref{originalcounterexample}, we conclude that strict spurious second-order critical points exist for $n = \max\{\dimgt + 2, 7\}$ points when $\dimgt \geq 2$.
Similar arguments show that (non-strict) second-order critical points exist for $n = \max\{\dimgt + 2, 6\}$.
We suspect these bounds are \emph{tight}, in the sense that if $\dimgt \leq 4$ and $n=6$, the landscape of~\eqref{eq:snl} with $\dimopt = \dimgt$ and the complete graph is benign for all ground truths, except for a set of measure zero.

\section{Numerical observations}\label{sec:numerics}

In this section, we explore the landscape of~\eqref{eq:snl} through numerical simulations.
How common are spurious local minimizers, and how effective is rank relaxation?
To answer this question empirically, we proceed as follows.
First, we generate a ground truth configuration at random ($n$ independent random standard Gaussian points in dimension $\dimgt$), which defines an instance of the SNL optimization problem with noiseless measurements.
Then, for a given optimization rank $\dimopt \geq \dimgt$, we run a trust-region algorithm initialized at a random initial configuration ($n$ independent random standard Gaussian points in dimension $\dimopt$), and we observe where it converges.
We repeat this process multiple times to collect statistics.

Even though the theoretical results in this paper apply specifically to the complete graph case, we are also interested in exploring numerically how the landscape changes for sparser graphs.
For this reason, we perform experiments with random \erdosrenyi{} graphs of various densities.

In all plots, the black curve indicates the probability that the random \erdosrenyi{} graph is connected at the given density.
This serves as a benchmark, since the ground truth cannot be recovered when the graph is disconnected.

The code to reproduce these numerical experiments is available in a public Github repository.\footnote{\url{https://github.com/qrebjock/snl-experiments}}

\subsection*{Recovering the ground truth}

In this first set of experiments, we define ``success'' as the ability to recover the ground truth.
Specifically, we draw a ground truth configuration $\gt \in \reals^{n \times \dimgt}$ at random, let the algorithm converge to $\pt \in \reals^{n \times \dimopt}$, and say that the algorithm succeeds if, after projection to dimension $\dimgt$, the points $\pt$ match $\gt$ up to rigid motion symmetries, within a tolerance of $10^{-10} \sqrt{n\dimgt}$ on the norm of the difference.
% NEW
Here, by projection, we mean forming the best rank-$\dimgt$ approximation of the Gram matrix $\ptgram$ and reconstructing the corresponding configuration.
Figure~\ref{fig:mds-norms} shows the results of this experiment.

\begin{figure}[p]
  \centering
  \begin{subfigure}[t]{\textwidth}
    \centering
    \includegraphics[width=0.495\textwidth]{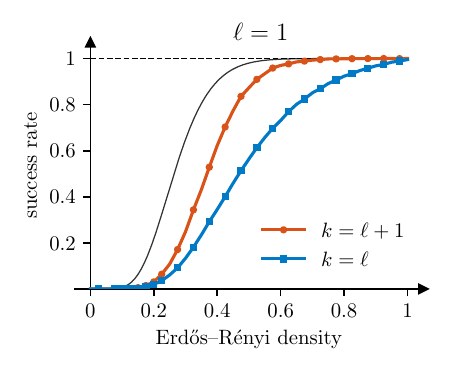}
    \includegraphics[width=0.495\textwidth]{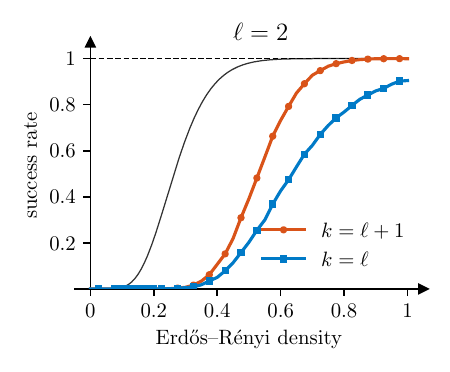}
    \caption{Simulations with $n = 10$ points.}
  \end{subfigure}
  \vspace{1em}
  \begin{subfigure}[t]{\textwidth}
    \centering
    \includegraphics[width=0.495\textwidth]{figures/l=1_n=50_norm.pdf}
    \includegraphics[width=0.495\textwidth]{figures/l=2_n=50_norm.pdf}
    \caption{Simulations with $n = 50$ points.}
  \end{subfigure}
  \caption{Success rates for ground truth recovery.
    The blue curves never attain a success rate of 1.}\label{fig:mds-norms}
\end{figure}

\subsection*{Driving the cost to zero}

In this second set of experiments, we define ``success'' as the ability to drive the cost to zero.
Specifically, we let the algorithm converge to $\pt \in \reals^{n \times \dimopt}$, and say that it succeeds if $\fu(\pt) \leq 10^{-10}$.
Note that this is a weaker success criterion than in the previous section: while the ground truth configuration has zero cost, other configurations may also achieve zero cost, particularly in the case of sparse graphs.
Figure~\ref{fig:mds-costs} shows the results of this experiment.

\begin{figure}[p]
  \centering

  \begin{subfigure}[t]{\textwidth}
    \centering
    \includegraphics[width=0.49\textwidth]{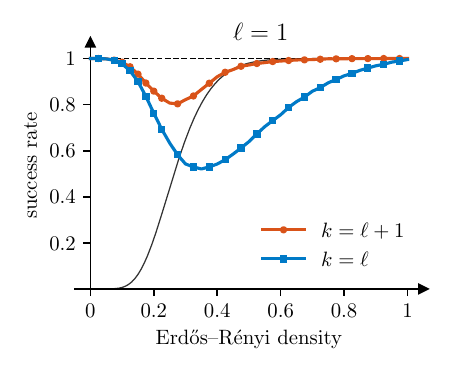}
    \hfill
    \includegraphics[width=0.49\textwidth]{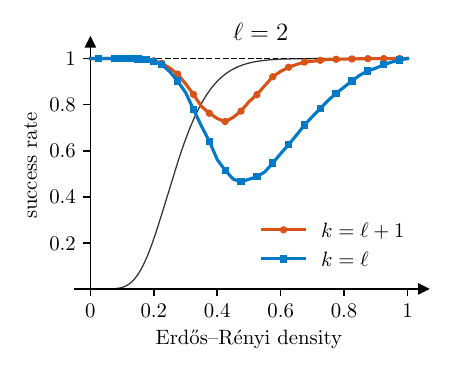}
    \caption{Simulations with $n = 10$ points.}
  \end{subfigure}

  \vspace{1em}

  \begin{subfigure}[t]{\textwidth}
    \centering
    \includegraphics[width=0.49\textwidth]{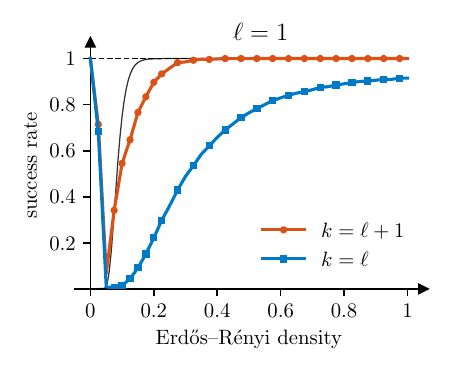}
    \hfill
    \includegraphics[width=0.49\textwidth]{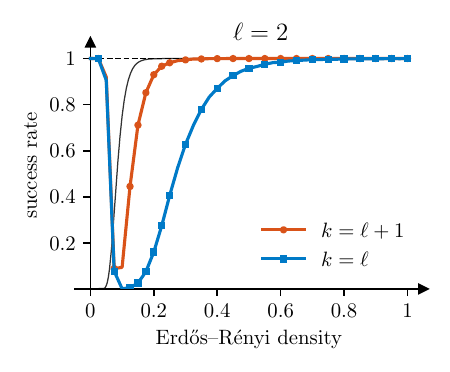}
    \caption{Simulations with $n = 50$ points.}
  \end{subfigure}

  \caption{Success rates for global optimality.
    The blue curves never attain a success rate of 1, except when the density is zero.}\label{fig:mds-costs}
\end{figure}

\subsection*{A few comments}

Consider the complete graph case without rank relaxation.
The frequency of spurious configurations depends strongly on the ground truth dimension $\dimgt$ and the number of points $n$.

For example, with $n = 50$, spurious second-order critical points are common for $\dimgt = 1$, but rare for $\dimgt \geq 2$, as shown in Figure~\ref{fig:mds-norms}.
The success rate is approximately 92\% for $\dimgt = 1$, and increases to 99.98\% for $\dimgt = 2$.
This may be due to the fact that in dimension $\dimgt = 1$, SNL resembles a discrete problem of recovering signs.

Additionally, for a given dimension $\dimgt$, spurious second-order critical points tend to be rare when $n$ is either very small or very large, and are most frequent in the intermediate regime.

The numerical experiments of this section focus on the noiseless case.
The results are stable to moderate amounts of noise.
We present such further experiments in Appendix~\ref{app:noise}.

\section{Perspectives}\label{sec:perspectives}

We conclude with a list of open questions.
\begin{itemize}
\item \textbf{(Relaxing by +1)} Based on numerical simulations, we conjecture that relaxing to $\dimopt = \dimgt + 1$ yields a benign landscape for all ground truth configurations in the complete graph case.
  Is this truly the case, or is there a small pathological set of ground truths---perhaps of measure zero---for which the landscape admits spurious local minimizers?

A possible avenue toward sharpening Theorem~\ref{th:sqrt-n} in this direction is to optimize the weights placed on the eigenmodes in the construction of $S$ in~\eqref{defS}.
Another possible avenue is to obtain finer control on the spectrum of $P$ in the spirit of Lemma~\ref{traceP}.\footnote{We thank an anonymous reviewer for suggesting these directions.}
  
\item \textbf{(Incomplete graph)} Numerical experiments indicate that relaxing to $\dimopt$ only slightly more than $\dimgt$ improves the probability of recovery in some regimes.
  Can we provide a theoretical explanation---for example, for specific graph structures such as trilateration graphs, expander graphs, geometric graphs, or unit disk graphs?
  
  % NEW
\noindent Theorem~\ref{th:isotropic} is extended to (slightly) incomplete graphs by~\citet[\S 8.8]{Criscitiello2025thesis}. 
In particular, it is shown that if $\edges$ in~\eqref{eq:snl} is missing $m$ edges, $n$ is sufficiently large, and $\dimopt \gtrsim \kappa(1+m^3)(\dimgt + \log n)$, then~\eqref{eq:snl} has benign landscape with high probability.
Extending this result to substantially sparser graphs remains an open problem.

\item \textbf{(Noisy settings)} This paper assumes that distances are known \emph{exactly}, which is unrealistic for practical applications.
  Benign landscapes are robust to small perturbations, but without control on the scale of the perturbations.
  Can we still ensure benign landscapes for SNL under sufficiently large levels of measurement noise?
\item \textbf{(Isotropic case)} The numerical simulations in Section~\ref{sec:numerics} suggest that spurious minimizers are extremely rare for \emph{isotropic} Gaussian ground truths and the complete graph, even without relaxation.
  For standard Gaussian ground truths in $\reals^\dimgt$ with $\dimgt \geq 2$, can we prove that the landscape of~\eqref{upstairs}, \emph{without relaxation}, is benign with high probability when the number of points $n$ is sufficiently large?
\item \textbf{(Convergence rates)} This paper does not address rates of convergence.
  In particular, when the dimension is relaxed, problem~\eqref{eq:snl} typically loses quadratic growth around its set of minimizers.
  Is this an obstacle to fast convergence in the relaxed setting?  The recent work of~\citet{davis2024gradientdescentadaptivestepsize} provides a positive starting point for investigating this question.
\item \textbf{(Range-Aided SLAM)} Range-Aided SLAM is a variant of Simultaneous Localization and Mapping (SLAM) that incorporates distance measurements into the standard SLAM formulation~\citep{rosenrangeaidedslam2024}.
  Range-Aided SLAM blends elements of both SNL and synchronization of rotations, the latter of which also has been analyzed from a landscape perspective~\citep{mcrae2023benignlowdim}.
  What is the landscape of optimization problems arising in Range-Aided SLAM?
  Like in SNL, can one empirically improve performance by relaxing such problems?
\item \textbf{(Nonconvex variants.)} The localization problem admits several equivalent reformulations.
  \cite{halsted2022riemannian} propose an exact reformulation involving optimization over edge directions.
  Is the landscape of this optimization problem also benign after suitable relaxation?
  How are its critical configurations related to those of the SNL landscape studied in this paper?
\end{itemize}

\section*{Acknowledgements}

We thank Brighton Ancelin for helpful discussions regarding the counterexample in Proposition~\ref{originalcounterexample}.
We are also grateful to Frederike Dümbgen for insightful conversations about applications of SNL in robotics.
We thank Steven J.~Gortler for his enthusiastic feedback on an early draft, as well as for valuable discussions related to rigidity theory. 
This work was supported by the Swiss State Secretariat for Education, Research and Innovation (SERI) under contract number MB22.00027.

\bibliographystyle{plainnat}
\bibliography{references,boumal}

\appendix

\section{An alternative set of properties of the EDM map}\label{app:otherconditions}

In this section, we show that Theorem~\ref{th:sqrt-n} holds under a somewhat different set of properties than~\ref{P1} to~\ref{P5}.
Specifically, assume $\Delta$ satisfies~\ref{P1} to~\ref{P3}, and satisfies the following two properties in place of~\ref{P4} and~\ref{P5}:
\begin{enumerate} [label=\textbf{Q\arabic*}]
\setcounter{enumi}{3}
\item \label{P4prime} $\langle \ptgram, \Psi(\ptgram) \rangle \leq \frac{c}{2}\langle \ptgram, \Gamma(\ptgram) \rangle$ for all $\ptgram \in \Cent(n)$ satisfying $\trace(\ptgram) = 0$.

\item \label{P5prime} $\Gamma(\ptgram) = \sum_{i=1}^m a_i a_i^\top (a_i^\top \ptgram a_i)$ for all $\ptgram \in \Cent(n)$, for some collection of vectors $a_1, \ldots, a_m \in \reals^n$.
\end{enumerate}
For the EDM map, we have $c = n$ in~\ref{P4prime}.
Note that~\ref{P1} is a consequence of~\ref{P5prime}.

Observe that we only use properties~\ref{P4} and~\ref{P5} in the proof of Lemma~\ref{derivationdescent} (which relies on Lemma~\ref{lem:cute}).
Therefore, let us show how to prove Lemma~\ref{derivationdescent} with properties~\ref{P4prime} and~\ref{P5prime} instead (and without using Lemma~\ref{lem:cute}).  We follow the notation introduced in Section~\ref{sec:sqrt-n}.

Consider the descent direction $\dot \pt = \xi_i v^\top$ where $\xi_i$ is an eigenvector of $C$ with eigenvalue $\gamma_i \leq 0$, and $v = \Sigma^{1/2} u$ with $u \in \ker(V^\top \gtgram V)$.
The 1-criticality condition $C \pt = 0$ implies $\langle \pt v \xi_i^\top, \xi_i v^\top \pt^\top \rangle = 0$ and $\trace(\pt v \xi_i^\top + \xi_i v^\top \pt^\top) = 0$.
Plugging this descent direction into the 2-criticality condition~\eqref{eq:2crit} gives
  \begin{equation}\label{eq:tobemultiplied}
    \begin{split}
      0 &\leq \|v\|^2\langle \xi_i \xi_i^\top, C \rangle + \langle \pt v \xi_i^\top, (\Delta^* \circ \Delta)(\pt v \xi_i^\top + \xi_i v^\top \pt^\top) \rangle \\
        &= \|v\|^2\langle \xi_i \xi_i^\top, C \rangle + \langle \pt v \xi_i^\top, \pt v \xi_i^\top + \Psi(\pt v \xi_i^\top + \xi_i v^\top \pt^\top) \rangle \\
        &\stackrel{(1)}{\leq}
        \|v\|^2\langle \xi_i \xi_i^\top, C \rangle + \langle \pt v \xi_i^\top, \pt v \xi_i^\top + \frac{c}{2}\Gamma(\pt v \xi_i^\top + \xi_i v^\top \pt^\top) \rangle \\
        &\stackrel{(2)}{=}
        \|v\|^2\langle \xi_i \xi_i^\top, C \rangle + \langle \pt v \xi_i^\top, \pt v \xi_i^\top \rangle + c \langle \pt v v^\top \pt^\top, \Gamma(\xi_i \xi_i^\top) \rangle.
    \end{split}
  \end{equation}
We used~\ref{P4prime} for $\stackrel{(1)}{=}$, and~\ref{P5prime} for $\stackrel{(2)}{=}$.
Multiplying inequality~\eqref{eq:tobemultiplied} by $-\gamma_i \geq 0$, and summing the resulting inequalities from $i = 1, \ldots, \dimgt$ we obtain exactly equation~\eqref{eq:usefulguy2} in Lemma~\ref{derivationdescent}.
The rest of the proof of Lemma~\ref{derivationdescent} then proceeds in exactly the same way as in Section~\ref{sec:p2crit}.

\begin{remark}[Recipe for generating maps $\Delta$]
The following recipe generates maps satisfying~\ref{P1},~\ref{P2},~\ref{P3},~\ref{P4prime}, and~\ref{P5prime}.
Choose a collection of vectors $\{a_i\}_{i=1}^m$ so that $\sum_{i=1}^m \|a_i\|^4 < 1$ and $\spann(\{a_i a_i^\top\}_{i=1}^N) = \Cent(n)$ --- this latter condition ensures that the $\Gamma$ is a positive definite map.
Define $\Gamma(\ptgram) = \sum_{i=1}^N a_i a_i^\top (a_i^\top \ptgram a_i)$.
Then properties ~\ref{P1},~\ref{P2},~\ref{P3},~\ref{P5prime} hold, and property~\ref{P4prime} holds with some constant $c > 0$.
We then define $\Delta$ through
$$(\Delta^* \circ \Delta)^{-1}(\ptgram) = \ptgram - \Gamma(\ptgram) = \ptgram - \sum_{i=1}^N a_i a_i^\top (a_i^\top \ptgram a_i).$$
\end{remark}

\section{Derivation of the revealing formulas}\label{app:formulas}

Consider the important formulas stated in~\eqref{eq:deltastardelta}.
Composing equations~\eqref{EDMmap} and~\eqref{eq:adjoint}, we obtain
\begin{align*}
    (\Delta^* \circ \Delta)(\ptgram) &= \frac{1}{2}\bigg(2 \ptgram + n\Diag(\ptgram) + \trace(\ptgram) I - \big(\ones \diag(\ptgram)^\top + \diag(\ptgram) \ones^\top\big)\bigg) \\
    &= \ptgram + \frac{n}{2} \centeringmat\Diag(\ptgram)\centeringmat + \frac{1}{2}\trace(\ptgram) \centeringmat,
\end{align*}
% NEW
assuming $\ptgram \in \Cent(n)$.

Using this formula, one then checks that the composition of $(\Delta^* \circ \Delta)$ and
$$\ptgram \mapsto \ptgram - \centeringmat \Diag(\ptgram) \centeringmat, \quad \quad \Cent(n) \to \Cent(n)$$
yields the identity, so this is the expression for $(\Delta^* \circ \Delta)^{-1}$.

Alternatively, the following formulas hold:
% NEW
\begin{align*}
  \Delta^{-1}(\holmat) &= -\holmat + \frac{1}{n}\big(\ones \ones^\top \holmat + \holmat \ones \ones^\top\big) - \frac{\ones^\top \holmat \ones}{n^2} \ones \ones^\top = - \centeringmat \holmat \centeringmat,\\
  (\Delta^{-1})^{*}(\ptgram) &= (\Delta^{*})^{-1}(\ptgram) = -\ptgram + \Diag \ptgram.
\end{align*}
Composing these formulas also yields
\begin{align*}
 (\Delta^* \circ \Delta)^{-1}(\ptgram) &= \ptgram - \Diag(\ptgram) + \frac{1}{n}\big(\ones \diag(\ptgram)^\top + \diag(\ptgram) \ones^\top\big) - \frac{\trace \ptgram}{n^2} \ones \ones^\top \\
 &= \ptgram - \centeringmat \Diag(\ptgram) \centeringmat.
\end{align*}
% NEW
We note that the above expression for $\Delta^{-1}$ is the basis of Schoenberg's Theorem~\citep{Schoenbergtheorem}.

\section{Eigenvalue interlacing supplementary}\label{app:interlacing}

The following interlacing theorem is classical and can be found in~\citep[Ch 4]{Horn1994}, for example.
\begin{proposition}[Eigenvalue Interlacing]\label{interlacing}
Let $M \in \Sym(\dimopt)$ have eigenvalues $m_1 \leq \ldots \leq m_\dimopt$.
Let $\calS$ be a $d$-dimensional subspace of $\reals^\dimopt$ with orthonormal basis $S \in \St(\dimopt, d)$.
Let $\tilde M = S^\top M S$ have eigenvalues $\tilde m_1 \leq \tilde m_2 \leq \ldots \leq \tilde m_d$.
Then,
\begin{align}\label{bunchofinterlacinginequalities}
  m_i \leq \tilde m_i \leq m_{\dimopt-d+i}, \quad \forall i \in [1,d].
\end{align}
\end{proposition}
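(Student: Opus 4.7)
The plan is to derive both halves of \eqref{bunchofinterlacinginequalities} from the Courant--Fischer variational characterization of eigenvalues of symmetric matrices. For $M \in \Sym(\dimopt)$ and $i \in \{1, \ldots, \dimopt\}$, the $i$th smallest eigenvalue admits the two equivalent descriptions
\begin{align*}
  m_i = \min_{\substack{\calU \subseteq \reals^\dimopt \\ \dim \calU = i}}\ \max_{\substack{u \in \calU \\ \|u\|=1}} u^\top M u = \max_{\substack{\calU \subseteq \reals^\dimopt \\ \dim \calU = \dimopt - i + 1}}\ \min_{\substack{u \in \calU \\ \|u\|=1}} u^\top M u,
\end{align*}
and analogously for $\tilde M \in \Sym(d)$ with $\dimopt$ replaced by $d$.

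The key observation is that because $S \in \St(\dimopt, d)$ has orthonormal columns, the linear map $v \mapsto Sv$ is a bijective isometry from $\reals^d$ onto $\calS = \im(S) \subseteq \reals^\dimopt$, and it satisfies the identity $v^\top \tilde M v = (Sv)^\top M (Sv)$. Consequently, every $j$-dimensional subspace $\calV \subseteq \reals^d$ corresponds to a $j$-dimensional subspace $S\calV \subseteq \calS$, with Rayleigh quotients of $\tilde M$ on $\calV$ matching Rayleigh quotients of $M$ on $S\calV$. Thus the variational characterizations of $\tilde m_i$ can be rewritten verbatim as minimizations or maximizations of Rayleigh quotients of $M$, but restricted to subspaces of $\calS$ instead of all of $\reals^\dimopt$.

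From here both inequalities follow immediately by the standard principle that restricting the set over which one minimizes (resp.\ maximizes) can only increase (resp.\ decrease) the value. For the lower bound, the min-max form gives $\tilde m_i$ as a minimum over $i$-dimensional subspaces contained in $\calS$, while $m_i$ is the minimum over all $i$-dimensional subspaces of $\reals^\dimopt$, so $m_i \leq \tilde m_i$. For the upper bound, the max-min form gives $\tilde m_i$ as a maximum over $(d-i+1)$-dimensional subspaces of $\calS$, while $m_{\dimopt - d + i}$ is the maximum over all $(d-i+1)$-dimensional subspaces of $\reals^\dimopt$, so $\tilde m_i \leq m_{\dimopt - d + i}$.

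There is no real obstacle here: this is a textbook application of min-max, and the only step requiring care is the index bookkeeping in the max-min formula, where the dimension $d - i + 1$ in $\reals^d$ corresponds to the index $\dimopt - (\dimopt - d + i) + 1 = d - i + 1$ on the $\reals^\dimopt$ side, confirming that the same class of subspaces (up to the $\calS$-restriction) is being compared.
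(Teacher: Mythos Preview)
Your proof is correct: this is the standard Courant--Fischer argument, and the index bookkeeping is handled properly. The paper itself does not prove this proposition at all---it simply states it as classical and cites \citep[Ch.~4]{Horn1994}---so your argument supplies exactly the textbook justification the paper defers to a reference.
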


Using the previous proposition, let us prove Lemma~\ref{equalityinterlacing1}, which additionally characterizes what happens when the interlacing inequalities become equalities.
\begin{proof}[Proof of Lemma~\ref{equalityinterlacing1}]
  Inequalities~\eqref{bunchofinterlacinginequalities} immediately imply~\eqref{conclusionequalitiesinterlacing}.
  It only remains to show $\dim(\calS \cap \ker(M)) \geq r-\dimopt+d$, assuming $\tilde m_1 = 0$.
  
  Let $M$ have associated eigenvectors $\mu_1, \ldots, \mu_\dimopt$.
  For $z \in \calS \cap \spann(\mu_1, \ldots, \mu_r)$, we can write $z = S \tilde z$, and so
  \begin{align*}
    0 = \tilde m_1 \leq \tilde z^\top \tilde M \tilde z = \tilde z^\top S^\top M S \tilde z = z^\top M z \leq m_r = 0.
  \end{align*}
  Hence, $0 = z^\top M z$, which implies $0 = M z$ (using that $\max_{z' \in \spann(\mu_1, \ldots, \mu_r)} z'^\top M z' \leq m_r = 0$).
  We conclude that
  $\calS \cap \spann(\mu_1, \ldots, \mu_r) \subseteq \calS \cap \ker(M).$
  
  On the other hand, 
  \begin{align}\label{supersmallhelper2}
  \dim(\calS \cap \spann(\mu_1, \ldots, \mu_r)) \geq \dim(\calS) + \dim(\spann(\mu_1, \ldots, \mu_r)) - \dimopt = d + r - \dimopt.
  \end{align}
Therefore, $\dim(\calS \cap \ker(M)) \geq r-\dimopt+d$.
\end{proof}

\section{Geometric interpretations of the descent directions}\label{app:geomdescdirections}

Given a 1-critical configuration $\pt$ that is not a minimizer, the goal of landscape analysis is to identify a \emph{descent direction} $\dot{\pt}$ satisfying $\langle \dot{\pt}, \nabla^2 \fu(\pt)[\dot{\pt}] \rangle < 0$, so that perturbing the configuration via $t \mapsto \pt + t \dot{\pt}$ leads to a decrease in the cost $\fu$.

In this section, we provide geometric interpretations of the descent directions used to prove Theorems~\ref{th:sqrt-n} and~\ref{thm:gt_dep_determ} in Sections~\ref{sec:sqrt-n} and~\ref{sec:isotropic}.

\paragraph{Descent directions in Theorem~\ref{th:sqrt-n}:}

By inspecting the proof of Lemma~\ref{derivationdescent} and the first proof of Lemma~\ref{lem:cute}, we find that the second-order criticality conditions~\eqref{eq:2crit} are evaluated along directions of the form $\dot{\pt} = \xi v^\top$, where $\xi$ is in $\reals^n$ and $v = \Sigma^{1/2} u$ for some $u \in \ker(V^\top \gtgram V)$.

Since the ground truth configuration $\gt$ and the critical configuration $\pt$ live in spaces of different dimensions, it is helpful to first map $\pt$ into the space of $\gt$. To this end, we define the ``best linear transformation'' $R \in \mathbb{R}^{\dimgt \times \dimopt}$ aligning $\pt$ to $\gt$ via a least-squares problem:\footnote{Since both $\pt$ and $\gt$ are centered, it is natural to consider only linear transformations rather than affine ones.}
\begin{align}\label{bestlinear}
  R := {\arg\min}_{R \in \mathbb{R}^{\dimgt \times \dimopt}} \sqfrobnorm{\gt - \pt R^\top} = \gt\transpose \pt (\pt\transpose \pt)^{-1}.
\end{align}
Using the decomposition $\pt = V \Sigma^{1/2}$, we can rewrite $R$ as
$R = \gt^\top \pt (\pt^\top \pt)^{-1} = \gt^\top V \Sigma^{-1/2}.$
It follows that $v = \Sigma^{1/2} u$ lies in the kernel of $R$ if and only if $u \in \ker(V^\top \gtgram V)$.

Let $\xi = (\xi^{(1)}, \ldots, \xi^{(n)})^\top \in \mathbb{R}^n$ denote the coordinates of $\xi$. We find that the descent direction $\dot{\pt} = \xi v^\top$ perturbs each point $z_i$ along the direction $v \in \ker(R)$ by an amount $\xi^{(i)}$, while leaving $R z_i$ unchanged: indeed, since $v \in \ker(R)$, we have $R(z_i + t v) = R z_i$ for all $t$.

Why might such a direction be useful? Intuitively, to decrease the objective $\fu$, one might hope to perturb $z_i$ in a way that moves $R z_i$ closer to $z_i^*$. Alternatively, from a more conservative standpoint, it is reasonable to at least avoid moving $R z_i$ further away from $z_i^*$. The descent direction achieves this latter goal by leaving $R z_i$ unchanged.

\paragraph{Descent directions in Theorem~\ref{thm:gt_dep_determ}:}

Inspecting the proof of Theorem~\ref{thm:gt_dep_determ} and the second proof of Lemma~\ref{lem:cute}, we find that the descent directions take the form $\dot{\pt} = (\gt - \pt R^\top) G$, where $G$ is a $\dimgt \times \dimopt$ matrix with i.i.d.\ standard Gaussian entries. We have shown that if $\pt$ is a 1-critical point but not a minimizer of $g$, then \emph{in expectation}, perturbing $\pt$ along $\dot{\pt}$ decreases the cost $g$.
Let us now interpret this descent direction geometrically.

Since the ground truth $\gt$ and the critical configuration $\pt$ live in spaces of different dimensions, it is natural to first embed $\gt$ into the space of $\pt$. A simple approach is to embed $\gt$ into a \emph{random} subspace $\im(G^\top) \subseteq \mathbb{R}^\dimopt$ by mapping each point $z_i^* \mapsto G^\top z_i^*$.

A naive strategy to try to decrease the cost would be to move each point $z_i$ towards the corresponding embedded ground truth point, i.e., to set $\dot{\pt} = \gt G - \pt$. However, this direct approach does not seem to work well.

Instead, the descent direction $\dot{\pt} = \gt G - \pt R^\top G$ used in Theorem~\ref{thm:gt_dep_determ} follows a slightly more refined procedure. It first maps the critical configuration $\pt$ into the subspace spanned by $\gt$ via the ``best linear transformation'' $R$ defined in~\eqref{bestlinear}, obtaining $\pt R^\top$. This aligned configuration is then embedded into the random subspace $\im(G^\top)$, resulting in $\pt R^\top G$. The descent direction thus measures the discrepancy between the embedded ground truth $\gt G$ and the aligned, embedded critical configuration $\pt R^\top G$.

\section{Robustness to noise}\label{app:noise}

In this section, we investigate the robustness of the numerical results from Section~\ref{sec:numerics} to small amounts of noise.
We follow the same procedure as in Section~\ref{sec:numerics} to generate ground truth configurations $\gt$, but we corrupt the squared distances with noise as follows.
The ground truth configurations yield Euclidean distance matrices $D = 2\Delta(\gt^{}\gt^\top)$, to which we add a random symmetric matrix $E \in \mathrm{Sym}(n)$ with i.i.d.\ Gaussian entries $E_{ij} \sim \mathcal{N}(0, 0.1)$.
This gives a noisy matrix $\tilde D = D + E$ of corrupted squared distances, and the optimization problem becomes
\begin{align}\label{eq:snl-corrupted}
  \min_{\pt \in \reals^{n \times \dimopt}} \, \fu(Z) = \big\|\Delta\big(\pt \pt^\top) - \tilde D/2\big\|_\frob^2,
\end{align}
which generally has a nonzero optimal value.

Because of the noise, the original ground truth configuration $\gt$ is typically not a minimizer of~\eqref{eq:snl-corrupted}.
Therefore, we run a trust-region algorithm initialized at $\gt$ and let it converge to a minimizer $\tilde \gt$ of~\eqref{eq:snl-corrupted}.
This solution serves as a proxy for the global minimizer of the corrupted problem, and we refer to it as \emph{the proxy} below.

We then repeat the same experimental setup as in Section~\ref{sec:numerics}:
for a given optimization rank $\dimopt \geq \dimgt$, we run a trust-region algorithm from a random initial configuration (consisting of $n$ independent standard Gaussian points in dimension $\dimopt$), observe the convergence point, and compare it to $\tilde \gt$.

\paragraph{Recovering the proxy for the global minimizer.}

In this first set of experiments, we define ``success'' as the ability to recover the proxy.
Specifically, we let the algorithm converge to $\pt \in \reals^{n \times \dimopt}$ from a random initialization, and say that the algorithm succeeds if, after projection to dimension $\dimgt$, the points $\pt$ match the proxy $\tilde \gt$ up to rigid motion symmetries, within a tolerance of $10^{-10} \sqrt{n\dimgt}$ on the norm of the difference.
Figure~\ref{fig:mds-norms-noise} shows the results of this experiment.
It would also be of interest to assess how well the solution $\pt$ found estimates the ground truth $\gt$, relative to statistical lower bounds; we do not do this here.

\begin{figure}[p]
  \centering
  \begin{subfigure}{0.49\textwidth}
    \centering
    \includegraphics{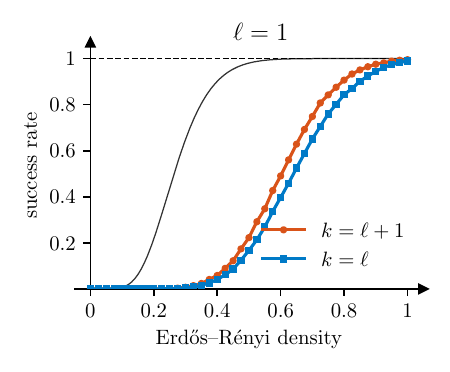}
    \caption{Simulations with $n = 10$ points.}
  \end{subfigure}
  \hfill
  \begin{subfigure}{0.49\textwidth}
    \centering
    \includegraphics{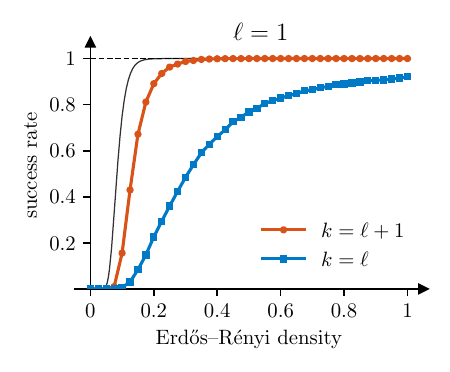}
    \caption{Simulations with $n = 50$ points.}
  \end{subfigure}
  \caption{Success rates for recovery of the proxy.}\label{fig:mds-norms-noise}
\end{figure}

\paragraph{Driving the cost to that of the proxy.}

In this second set of experiments, we define ``success'' as the ability to drive the cost to that of the proxy.
Specifically, we let the algorithm converge to $\pt \in \reals^{n \times \dimopt}$, and say that it succeeds if $\fu(\pt) \leq (1 + 10^{-10})\fu(\tilde \gt)$.
Note that this is a weaker success criterion than in the previous paragraph: other configurations than the proxy may also achieve a cost no larger than $\fu(\tilde \gt)$.
Figure~\ref{fig:mds-costs-noise} shows the results of this experiment.

\begin{figure}[p]
  \centering
  \begin{subfigure}{0.49\textwidth}
    \centering
    \includegraphics{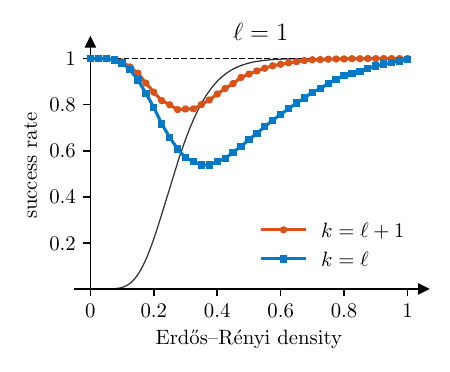}
    \caption{Simulations with $n = 10$ points.}
  \end{subfigure}
  \hfill
  \begin{subfigure}{0.49\textwidth}
    \centering
    \includegraphics{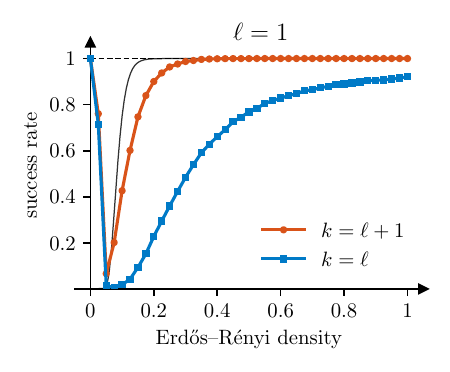}
    \caption{Simulations with $n = 50$ points.}
  \end{subfigure}
  \caption{Success rates for a cost lower than that of the proxy.}\label{fig:mds-costs-noise}
\end{figure}

\end{document}